\theoremstyle{plain}
\newtheorem{theorem}{Theorem}
\newtheorem{lemma}[theorem]{Lemma}
\newtheorem{corollary}[theorem]{Corollary}
\newtheorem{proposition}[theorem]{Proposition}
\newtheorem{conjecture}[theorem]{Conjecture}
\theoremstyle{definition}
\newtheorem{definition}[theorem]{Definition}
\newtheorem{example}[theorem]{Example}
\newtheorem{remark}[theorem]{Remark} 
\numberwithin{equation}{section}
\newcommand{\C}{\mathbb C}
\newcommand{\Q}{\mathbb Q}
\newcommand{\Z}{\mathbb Z}
\newcommand{\N}{\mathbb N}
\newcommand{\lex}{\le_{\mathrm{lex}}}
\newcommand{\rep}{\mathrm{rep}}
\newcommand{\val}{\mathrm{val}}
\newcommand{\Max}{\mathrm{Max}}
\newcommand{\Pref}{\mathrm{Pref}}
\title{Numeration systems without a dominant root and regularity}
\author{\'{E}milie Charlier, Savinien Kreczman}
\date{\today}
\begin{document}

\begin{abstract}
Positional numeration systems are a large family of numeration systems used to represent natural numbers. Whether the set of all representations forms a regular language or not is one of the most important questions that can be asked of such a system.
This question was investigated in a 1998 article by Hollander. Central to his analysis is a property linking positional numeration systems and Rényi numeration systems, which use a real base to represent real numbers. However, this link only exists when the initial numeration system has a dominant root, which is not a necessary condition for regularity. 
In this article, we show a more general link between positional numeration systems and alternate base numeration systems, a family generalizing Rényi systems. We then take advantage of this link to provide a full characterization of those numeration systems that generate a regular language. We also discuss the effectiveness of our method, and comment Hollander's results and conjecture in the light of ours.
\end{abstract}

\keywords{Numeration systems, Regular languages, Alternate bases, Linear recurrence sequences}

\maketitle
\setcounter{tocdepth}{1}
\tableofcontents

\section{Introduction}

If we are to compute operations on numbers (which could be integers, real numbers, Gaussian integers, complex numbers, or even $p$-adic numbers), we first need to agree on a way to represent these numbers. The theory of numeration systems examines the different methods that can be used to represent numbers through sequences of symbols. Depending on the tasks we are interested in performing, we can then try to select the most suitable numeration system. A desirable property of the chosen system is that the operations of interest can be performed by some finite state machine.  

In this paper, we consider the representation of non-negative integers via positional numeration systems as initiated by Fraenkel \cite{Fraenkel:1985}. As explained in the nice chapter by Frougny and Sakarovitch \cite{Frougny&Sakarovitch:2010}, perhaps the most fundamental question to be asked is to understand which positional numeration systems support a regular set of representations. That is, do all valid expansions form a regular language in the sense of finite automata? Indeed, the regularity of the set of expansions is a necessary condition to the use of automata-theoretic techniques in relation to numeration systems. We can mention the ability to normalize representations with transducers \cite{Frougny:1992, Frougny&Solomyak:1996}, which enables the computation of addition in the numeration system using finite state machines. In turn, this allows us to use tools such as \texttt{Walnut}\cite{Mousavi:2021,Shallit:2022} to automatically prove results related to the numeration system, following a connection to first-order logic pioneered by B\"{u}chi \cite{Buchi:1990}.

Most of the commonly used numeration systems, even in the non-standard numeration systems community, have the property to generate a regular numeration language. This is the case of all Bertrand numeration systems associated with a Parry number. These define a very constrained family of positional numeration systems since there are only one or two Bertrand numeration systems associated with a given Parry number \cite{Bertrand-Mathis:1989,Charlier&Cisternino&Stipulanti:2022}. All integer base systems as well as the Zeckendorf numeration system \cite{Zeckendorf:1972} based on the Fibonacci sequence are Bertrand numeration systems. Another family with this property is that of Pisot numeration systems, which are the positional numeration systems defined by a linear recurrence relation whose characteristic polynomial is the minimal polynomial of a Pisot number \cite{Frougny&Solomyak:1996,Bruyere&Hansel:1997}. This time, many  Pisot numeration systems are associated with one given Pisot number, since the initial conditions can be more freely chosen. Nevertheless, this family is still very special, in particular because of the irreducibility constraint on the characteristic polynomial. These two families share the property of having a dominant root, meaning that the quotient of consecutive terms in the base sequence admits a limit, which in addition is greater than one. On the other hand, even some simple numeration systems, like the one based on the sequence of squares, do not support a regular set of representations \cite{Shallit:1994}.

As an attempt to unify the theory of numeration systems with a regular numeration language, Lecomte and Rigo defined the family of abstract numeration systems \cite{Lecomte&Rigo:2001,Lecomte&Rigo:2010}. Here, the point of view is reversed. One starts with any given regular language, orders the words in the language with respect to the radix order (i.e., by length first and then using the lexicographic order within each length) and declares that a non-negative integer $n$ is represented by the $n$-th word in the language. This framework encompasses the previously mentioned families of numeration systems as well as others, such as the Dumont-Thomas numeration systems built from a substitution \cite{Dumont&Thomas:1989}. The price of this very general point of view is that we lose the information of the algorithm to directly produce the representation of a number. This raises the question of determining which regular languages can be produced by a positional numeration system. This problem was studied in \cite{Kreczman&Labbe&Stipulanti:2025} for the family of Dumont-Thomas numeration systems.

The question of characterizing positional systems generating a regular numeration language was addressed by Hollander in the case where the numeration system satisfies the dominant root condition \cite{Hollander:1998}. In his study, Hollander provided a detailed, however not full, description of systems with a dominant root and giving rise to a regular set of representations. In particular, he does not entirely solves the issue of the dependence to the initial conditions. Central to Hollander's arguments is a property \cite[Section 4]{Hollander:1998} linking positional numeration systems to the representation of real numbers via a real base \cite{Renyi:1957,Parry:1960,Schmidt:1980,Bertrand-Mathis:1986, Dajani&DeVries&Komornik&Loreti:2011}. Whenever the dominant root condition is dropped, this link disappears and hence new tools are needed in order to attack the general problem. 

The aim of this paper is to provide a complete characterization of positional numeration systems that yield a regular numeration language. We solve this problem in its full generality, not adding any particular condition on the given positional numeration system. Our study will lead us to deal with systems that are not associated with a real base as was the case in Hollander’s work. However, we will see that systems with a regular numeration language are always associated with a very specific way to represent real numbers, namely by using an alternate base of real numbers as introduced in \cite{Charlier&Cisternino:2021}. This observation was in fact the original motivation for the study of these new representations of real numbers. In particular, the notion of Parry alternate base developed in \cite{Charlier&Cisternino&Masakova&Pelantova:2023} will be central in the present work since we will see that this condition is necessary in order to ensure the regularity of the numeration language of a positional numeration system.

Our main result will be separated into four parts, each handling a different behavior of the greedy algorithm for the representation of $1$ via alternate real bases. Together, these results yield a semi-decision procedure for deciding the regularity of the numeration language of a positional numeration system. Testing the ultimate periodicity of the greedy expansions of $1$ in an alternate base (in fact, even in a single real base) is a difficult task, that is not known to be decidable in general. But, provided that the associated greedy expansions of $1$ are known to be finite or ultimately periodic, which we can indeed check in many situations, our results actually provide us with a genuine decision procedure. 

The article will be structured as follows. In \cref{sec:positional-systems}, we recall the basics of positional numeration systems and introduce our notation. In \cref{sec:restriction-linear} we show that we can restrict our study to the case where the base sequence is a linear recurrence sequence. In \cref{sec:max-regular}, we explain how to restrict our study to the thin sub-language of the lexicographically maximal words of each length. We explain how alternate bases naturally arise and recall the related notions in \cref{sec:alternate-bases}, then show how they induce four different types of behavior within the numeration language in \cref{sec:graph}. We prove the link between lexicographically maximal words of each length and the associated alternate base in \cref{sec:Parry}, allowing a further restriction to Parry alternate bases only. With the preparatory work done, \cref{sec:strategy} details our strategy for proving the announced criteria. \cref{sec:i-infini,sec:i-vers-infini,sec:i-cycle,sec:i-vers-cycle} contain the bulk of the work, carefully analyzing each of the four cases outlined above and obtaining a criterion for each of them. In these four sections, we take care of positioning our results in comparison to those obtained by Hollander, when we restrict our hypotheses to the case of a dominant root: either we recover the previously known results, or we handle a new situation, not occurring in the case of a dominant root. In particular, our results allow the possibility of a base to be equal to $1$. In the dominant root case, this corresponds to the polynomial case, which was not treated in \cite{Hollander:1998}. \cref{sec:decision} summarizes our results and comments on their effectiveness. We provide two carefully designed examples in order to illustrate different scenarios. Finally, \cref{sec:MemePolGraphesDiffs,sec:comments-Hollander} answer Hollander's conjecture in the negative and explain why similar conjectures are unlikely to be true in the non-dominant root case, concluding the paper.

\section{Positional numeration systems}\label{sec:positional-systems}
A \emph{positional numeration system} is given by an increasing sequence $U=(U_n)_{n\ge 0}$ of integers 
such that $U_0=1$ and the quotients $\frac{U_{n+1}}{U_n}$ are uniformly bounded. The \emph{value} of a word $w_{\ell-1}\cdots w_0$ over $\N$ in the system $U$ is written
\[
    \val_U(w_{\ell-1}\cdots w_0)=\sum_{n=0}^{\ell-1} w_n U_n.
\] 
For a given $x\in\N$, any word $w$ with letters in $\N$ such that $\val_U(w)=x$ is said to be a \emph{$U$-representation} of $x$. Such a representation need not be unique. 

Among all possible $U$-representations of $x$, we will consider the one obtained using the greedy algorithm. First, we let $\ell$ be the least integer such that $x<U_\ell$ and we let $r_\ell=x$. Then for every $n=\ell-1,\ldots,0$, we set $a_n=\lfloor \frac{r_{n+1}}{U_n}\rfloor$ and $r_n=r_{n+1}-a_nU_n$. The produced $U$-representation is 
\[
    \rep_U(x):=a_{\ell-1}\cdots a_0,
\]
which is called the \emph{greedy} $U$-representation of $x$. 

The language of all greedy $U$-representations, possibly preceded by zeros, i.e., the language
\[
    L_U:=0^*\rep_U(\N),
\]
is called  the \emph{numeration language}. It is written over the alphabet 
\[
    A_U:=\{0,\ldots,\sup_{n\ge 0} \left\lceil \frac{U_{n+1}}{U_n}\right\rceil - 1\},
\]
called the \emph{numeration alphabet}.

Our aim is to characterize the positional numeration systems giving rise to a regular numeration language. Note that the languages $L_U$ and $\rep_U(\N)$ are simultaneously regular. However, allowing leading zeros will often be more convenient for our future developments. In particular, a word $w_{\ell-1}\cdots w_0\in A_U^*$ belongs to $L_U$ if and only if 
\begin{equation}
    \label{eq:val-greedy}
    \val_U(w_{n-1}\cdots w_0)< U_n 
\end{equation}
for all $n\in\{0,\ldots,\ell\}$. This also implies that the language $L_U$ is suffix-closed.

\section{First step: Restriction to linear numeration systems}\label{sec:restriction-linear}
The following well-known result -- see for instance \cite{Shallit:1994,Loraud:1995} -- asserts that we can restrict our study to \emph{linear numeration systems}, which are positional numeration systems with the additional condition that the base sequence $U=(U_n)_{n\ge 0}$ is \emph{linear over $\Z$}: there exists integers $c_0,\ldots,c_{m-1}$ such that
\begin{equation}
    \label{eq:lin-rec}
    U_{n+m}=c_{m-1}U_{n+m-1}+\ldots+c_0 U_n
\end{equation}
for all $n \geq 0$. The polynomial
\[
    X^m-c_{m-1}X^{m-1}-c_2X^{m-2}-\cdots - c_0
\]
is called the \emph{characteristic polynomial} of the linear recurrence relation \eqref{eq:lin-rec}. 

\begin{proposition}
\label{prop:reg->linear}
Any positional numeration system with a regular numeration language is linear.
\end{proposition}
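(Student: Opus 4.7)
My plan is to combine a simple counting identity with the Cayley--Hamilton theorem applied to a DFA for $L_U$. The key identity, which I would establish first, is
\[
    \#(L_U \cap A_U^n) = U_n \qquad \text{for every } n \geq 0.
\]
Every word of length $n$ in $L_U = 0^*\rep_U(\N)$ has a unique decomposition $0^{n-|w|}w$ with $w \in \rep_U(\N)$ and $|w| \leq n$. By construction of the greedy algorithm (specifically, the minimality of $\ell$ used to define $\rep_U(x)$), $|\rep_U(x)| \leq n$ holds if and only if $x < U_n$, so injectivity of $\rep_U$ produces exactly $U_n$ such words. This step is pure bookkeeping about the interplay between the padding $0^*$ and the length convention for $\rep_U$; it is the only point in the argument that really requires any care.

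Assuming now that $L_U$ is regular, I would fix a deterministic finite automaton $(Q, A_U, \delta, q_0, F)$ recognizing $L_U$ and associate to it the counting matrix $M \in \Z^{Q \times Q}$ defined by $M_{p,q} = \#\{a \in A_U : \delta(p,a) = q\}$. A routine induction on $n$ shows that $(M^n)_{p,q}$ equals the number of words of length $n$ labelling a path from $p$ to $q$, and summing over accepting states gives
\[
    U_n = \sum_{q \in F} (M^n)_{q_0, q}.
\]
By the Cayley--Hamilton theorem, the matrix $M$ is annihilated by its characteristic polynomial $\chi_M(X) = X^m - c_{m-1}X^{m-1} - \cdots - c_0$, which is \emph{monic} with \emph{integer} coefficients because $M$ has integer entries. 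Multiplying the identity $M^{n+m} = c_{m-1}M^{n+m-1} + \cdots + c_0 M^n$ on the left by $e_{q_0}^{\top}$ and on the right by $\sum_{q\in F} e_q$ yields the recurrence \eqref{eq:lin-rec} verbatim.

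There is no real obstacle beyond the counting identity; Cayley--Hamilton does the rest for free, and it is precisely the right tool here because it automatically delivers both the integrality of the coefficients $c_i$ and, crucially, the fact that the leading coefficient equals $1$. That second feature would not be available if one merely argued via the rationality of the generating series $\sum_n U_n z^n$ of a regular language, since the natural recurrence extracted from a rational fraction $P(z)/Q(z)$ would only have rational coefficients and a leading coefficient equal to the constant term of $Q$. Passing through a nonnegative integer matrix and its characteristic polynomial is therefore essential to match the precise form imposed by the definition of a linear numeration system.
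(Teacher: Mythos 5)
Your proof is correct and is essentially the standard argument that the paper delegates to the cited references (Shallit, Loraud): the identity $\#(L_U\cap A_U^n)=U_n$ follows exactly as you say from the fact that $\rep_U(x)$ has no leading zero for $x\ge 1$ and that $|\rep_U(x)|\le n$ iff $x<U_n$, and the Cayley--Hamilton step on the transfer matrix of a complete DFA delivers the monic integer recurrence \eqref{eq:lin-rec}. The only quibble is your closing remark: the generating-series route is not actually weaker, since the denominator of the $\N$-rational series is $\det(I-XM)$, whose reciprocal polynomial is precisely $\chi_M$ and hence already monic with integer coefficients; but this does not affect the validity of your proof.
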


Let us recall some background on linear sequences that will be useful in our future analysis. Given a sequence of rational numbers $U=(U_n)_{n\ge 0}$, the set of polynomials
\[
    \{c_mX^m+c_{m-1}X^{m-1}+\cdots+c_0\in\Q[X]: \forall n\ge 0,\ c_m U_{n+m}+c_{m-1}U_{n+m-1}+\cdots+c_0 U_n=0\}
\]
is an ideal $I_U$ of the ring of polynomials $\mathbb{Q}[X]$. The monic generator of this ideal is called the \emph{minimal polynomial} of $U$. Its roots are called the \emph{eigenvalues} of $U$. The \emph{multiplicity} of an eigenvalue of $U$ is its multiplicity as a root of the minimal polynomial of~$U$. Now, if $U$ is an integer sequence that is linear over $\Z$, i.e., if the ideal $I_U$ contains a monic polynomial with integer coefficients, then the minimal polynomial of $U$ also has integer coefficients. We refer to \cite{Berstel&Reutenauer:2011} for more details on linear recurrence sequences.

\section{Second step: Restriction to lexicographically maximal words of each length}
\label{sec:max-regular}

In this section, we show that we can focus on the regularity of the language of \emph{maximal words} of the numeration language. The language of maximal words of a language $L$ over a totally ordered alphabet is the language
\[
    \mathrm{Max}(L) := \{u\in L :  \text{for all }v\in L,\ |v|=|u|\implies v\lex u\}, 
\]
where $\lex$ is the lexicographic order. That is, $\mathrm{Max}(L)$ is the language obtained by extracting the lexicographically greatest word of each length present in $L$. In our case, we may present an alternative characterization of this language. 

First, let us recall the following well-known property of greedy $U$-representations; see for example~\cite[Chapter 7]{Lothaire:2002}. It uses the so-called \emph{radix order} $\le_{\mathrm{rad}}$ where words are ordered length by length first, and then lexicographically: $u<_{\mathrm{rad}} v$ if either $|u|<|v|$, or $|u|=|v|$ and $u<_{\mathrm{lex}} v$. 

\begin{lemma}
\label{lem:Rep-Val-RadixOrder}
Let $U$ be a positional numeration system, let $x\in\N$ and let $w\in A_U^*\setminus 0A_U^*$. If $\rep_U(x)<_{\mathrm{rad}} w$ then $x<\val_U(w)$. 
\end{lemma}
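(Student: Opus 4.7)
The plan is to split along the two clauses of the definition of the radix order: either $|\rep_U(x)|<|w|$, or the lengths coincide and $\rep_U(x)<_{\mathrm{lex}}w$.

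In the length case, I would invoke the defining feature of the greedy algorithm: with $\ell:=|\rep_U(x)|$, the integer $\ell$ is by construction the smallest one satisfying $x<U_\ell$. Since $|w|>\ell$ and $w$ has no leading zero, its leading digit contributes at least $U_{|w|-1}\geq U_\ell$ to $\val_U(w)$, so $\val_U(w)\geq U_\ell>x$ immediately.

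For the lexicographic case, write $\rep_U(x)=a_{\ell-1}\cdots a_0$ and $w=w_{\ell-1}\cdots w_0$ with the common length $\ell$, and let $k$ be the largest index at which the two words differ, so that $a_j=w_j$ for $j>k$ and $a_k<w_k$. I would then bound $\val_U(w)$ from below by keeping the high-order digits (which match those of $\rep_U(x)$), using $w_k\geq a_k+1$ at position $k$, and simply discarding the low-order digits $w_{k-1},\ldots,w_0$ (which contribute non-negatively). The key inequality is the greedy-admissibility bound $\val_U(a_{k-1}\cdots a_0)<U_k$ from \eqref{eq:val-greedy} applied to $\rep_U(x)$; since both sides are integers, this upgrades to $U_k\geq\val_U(a_{k-1}\cdots a_0)+1$. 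Assembling the three pieces yields $\val_U(w)\geq\val_U(\rep_U(x))+1=x+1$.

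No genuine obstacle arises; the statement is a routine consequence of the greedy algorithm. The only subtlety worth flagging is in the lexicographic case: the low-order digits $w_0,\ldots,w_{k-1}$ may individually be smaller than the corresponding $a_i$, so one cannot compare the valuations digit by digit. Instead, one must bundle the entire low-order suffix into the single inequality at position $k$ via \eqref{eq:val-greedy}, which is precisely what makes the greedy representation the right reference point.
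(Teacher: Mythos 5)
Your proof is correct. The paper does not actually prove this lemma---it is stated as a well-known property with a citation to Lothaire---and your argument is precisely the standard one: the different-lengths case follows from the minimality of $\ell$ in the greedy algorithm together with the absence of a leading zero in $w$, and the equal-lengths case follows by isolating the first (highest-index) differing digit and absorbing the entire low-order suffix of $\rep_U(x)$ into the integer inequality $\val_U(a_{k-1}\cdots a_0)\le U_k-1$ coming from \eqref{eq:val-greedy}. Your closing remark correctly identifies the one point where a naive digit-by-digit comparison would fail.
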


This allows us to obtain the following description of the maximal words in $L_U$.

\begin{lemma}
\label{lem:Max-Words}
Let $U$ be a positional numeration system. 
Then 
\[
    \mathrm{Max}(L_U)=\{\rep_U(U_n-1):n\in\N\}.
\]
\end{lemma}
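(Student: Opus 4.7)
The plan is to prove the two inclusions, starting from the observation that the greedy algorithm applied to $U_n - 1$ produces a word of length exactly $n$: since $U$ is strictly increasing with $U_0 = 1$, the least $\ell$ with $U_n - 1 < U_\ell$ is $\ell = n$. For $n \ge 1$, the leading digit of this word is $\lfloor (U_n - 1)/U_{n-1}\rfloor \ge 1$, so $\rep_U(U_n - 1) \in A_U^* \setminus 0 A_U^*$; the case $n = 0$ simply yields the empty word.

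To show that $\rep_U(U_n - 1) \in \Max(L_U)$ for $n \ge 1$, I would take an arbitrary $w \in L_U$ with $|w| = n$ and compare it to $\rep_U(U_n - 1)$. If $w$ starts with $0$, then $w \lex \rep_U(U_n - 1)$ is immediate since the latter starts with a positive digit. Otherwise $w \in A_U^* \setminus 0 A_U^*$, and I apply \cref{lem:Rep-Val-RadixOrder} with $x = U_n - 1$: if we had $\rep_U(U_n - 1) <_{\mathrm{rad}} w$, then we would get $U_n - 1 < \val_U(w)$, that is, $\val_U(w) \ge U_n$, contradicting the greedy bound~\eqref{eq:val-greedy} for $w$. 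As both words have length $n$, the radix and lexicographic orders coincide between them, and I conclude $w \lex \rep_U(U_n - 1)$. The case $n = 0$ is trivial since $\varepsilon$ is the only word of length zero, and it lies in $L_U$.

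The reverse inclusion then follows immediately: for each length $n$, the lexicographic maximum of length-$n$ words in $L_U$ is unique when it exists, and I have just exhibited it as $\rep_U(U_n - 1)$. Hence any $w \in \Max(L_U)$ is of that form.

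There is no conceptually deep obstacle here; the argument is a direct combination of the order-preserving correspondence between $U$-representations and their values provided by \cref{lem:Rep-Val-RadixOrder} with the greedy upper bound~\eqref{eq:val-greedy}. The only mild subtlety is the handling of leading zeros, which resolves itself because a word starting with $0$ is automatically lex-smaller than any word of the same length starting with a positive digit.
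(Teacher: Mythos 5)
Your proposal is correct and follows essentially the same route as the paper's proof: establish that $\rep_U(U_n-1)$ has length $n$ and belongs to $L_U$, then compare any length-$n$ word $w\in L_U$ to it, splitting on whether $w$ starts with $0$ and otherwise invoking \cref{lem:Rep-Val-RadixOrder} together with the greedy bound~\eqref{eq:val-greedy}. The paper's version is just terser; no substantive difference.
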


\begin{proof}
    Clearly, the word $\rep_U(U_n-1)$ belongs to $L_U$ for all $n$. From the greedy algorithm, we know that $|\rep_U(U_n-1)|=n$ and that every word $w$ in $L_U$ of length $n$ is such that $\val_U(w)< U_n$. If $w$ starts with $0$, then $w<_{\mathrm{lex}} \rep_U(U_n-1)$. Otherwise, we get $w\lex \rep_U(U_n-1)$ by \cref{lem:Rep-Val-RadixOrder}.
\end{proof}

The language $\mathrm{Max}(L_U)$ is of great interest to us because it encompasses the regularity of the whole numeration language $L_U$ as shown by the following proposition.

\begin{proposition}
\label{prop:RegIffMaxReg}\ 
\smallskip

\begin{itemize}\setlength\itemsep{0.7em}
    \item If $L$ is a regular language, then the language $\mathrm{Max}(L)$ is regular.
    \item If $U$ is a positional numeration system such that $\mathrm{Max}(L_U)$ is regular, then the numeration language $L_U$ is also regular.
\end{itemize}
\end{proposition}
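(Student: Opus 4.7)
For the first statement, I would use a standard closure argument. The relation
\[
    R = \{(u,v) \in A^*\times A^* : |u|=|v|,\ u <_{\mathrm{lex}} v,\ v\in L\}
\]
is recognised by a length-synchronous two-tape automaton that simulates a DFA for $L$ on the second tape and tracks, through a two-state flag, whether the two tapes have matched so far or whether a position has already been seen at which the second letter is strictly greater. Projecting out the $v$-tape yields a regular language $\bar M = \{u : \exists v\in L,\ |v|=|u|,\ u <_{\mathrm{lex}} v\}$, and then $\Max(L) = L \setminus \bar M$ is regular.

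For the second statement, I would first establish the following characterisation of the numeration language in terms of the maximal words $m_n := \rep_U(U_n-1)$ from \cref{lem:Max-Words}: for every $u = u_{\ell-1}\cdots u_0 \in A_U^*$,
\[
    u \in L_U \iff \forall n\in\{0,\ldots,\ell\},\ u_{n-1}\cdots u_0 \lex m_n.
\]
Granted this, the complement of $L_U$ inside $A_U^*$ is precisely the set of words having at least one suffix $u_{n-1}\cdots u_0$ strictly greater than $m_n$ in lexicographic order: that is, $L_U = A_U^*\setminus A_U^*B$ where $B = \{v\in A_U^+ : v >_{\mathrm{lex}} m_{|v|}\}$. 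Because $\Max(L_U) = \{m_n : n\geq 0\}$ is assumed regular, the same length-synchronous construction as above, now using the DFA for $\Max(L_U)$ on the second tape, shows that $B$ is regular; therefore so is $A_U^*B$, and hence so is its complement $L_U$ inside $A_U^*$.

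For the characterisation itself, the forward implication follows from \eqref{eq:val-greedy}: if $\val_U(u_{n-1}\cdots u_0) < U_n$ and $u_{n-1}\neq 0$, then \cref{lem:Rep-Val-RadixOrder} applied to $m_n$ and $u_{n-1}\cdots u_0$ forces $u_{n-1}\cdots u_0 \lex m_n$, while if $u_{n-1} = 0$ with $n \geq 1$ the inequality is automatic since $m_n$ starts with a nonzero letter. The reverse implication I would prove by induction on $n$. The case $n = 0$ is trivial; for $n \geq 1$, if $u_{n-1}\cdots u_0 = m_n$ then $\val_U(u_{n-1}\cdots u_0) = U_n - 1$, and otherwise, letting $j$ be the largest index in $\{0,\ldots,n-1\}$ with $u_j\neq (m_n)_j$, one has $u_j < (m_n)_j$ and $u_i = (m_n)_i$ for $i > j$, whence
\[
    \val_U(m_n) - \val_U(u_{n-1}\cdots u_0) = ((m_n)_j - u_j)\,U_j + \sum_{i=0}^{j-1}((m_n)_i - u_i)\,U_i \geq U_j - \val_U(u_{j-1}\cdots u_0),
\]
and the induction hypothesis at the index $j < n$ gives $\val_U(u_{j-1}\cdots u_0) \leq U_j - 1$, so $\val_U(u_{n-1}\cdots u_0) \leq U_n - 2 < U_n$.

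The main obstacle is precisely this induction. The tempting shortcut ``$u_{n-1}\cdots u_0 \lex m_n$ implies $\val_U(u_{n-1}\cdots u_0) < U_n$'' is in general false: since the alphabet $A_U$ may contain digits much larger than some ratios $U_{k+1}/U_k$, a large low-order digit can overshoot $\val_U(m_n)$ even when the leading digit of $u$ is strictly smaller than that of $m_n$. Requiring the lex bound at every suffix length simultaneously is what makes the induction go through, as it propagates the bound from low positions up the tower and compensates for this low-order excess. Once the characterisation is in hand, the remainder is a routine application of the closure of length-preserving regular relations under projection.
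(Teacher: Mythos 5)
Your proof is correct. The first item is essentially the paper's argument (Shallit's product construction guessing a lexicographically larger word of the same length, phrased by you as projecting a length-synchronous regular relation). The second item, however, takes a genuinely different and shorter route. The paper first re-proves \cref{lem:Suffixes} implicitly and then invokes \cref{lem:xy*z} to decompose $\Max(L_U)$ into finitely many pieces $x_jy_j^*z_j$, from which it builds an explicit NFA $\mathcal{A}$ and shows $L_U=L(\mathcal{A})\setminus(K_1\cup K_2)$; the point of that effort, as stated in the surrounding text, is to obtain a concrete and comparatively small automaton for $L_U$ (illustrated in \cref{Ex:Automaton}), improving on Hollander's product construction. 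You instead observe that \cref{lem:Suffixes} says exactly that $A_U^*\setminus L_U=A_U^*B$ with $B=\{v\in A_U^+: v>_{\mathrm{lex}}\rep_U(U_{|v|}-1)\}$, and that $B$ is regular by the same synchronous-product trick applied to a DFA for $\Max(L_U)$ (using, via \cref{lem:Max-Words}, that this language has exactly one word of each length), so $L_U$ is regular by Boolean closure. This avoids \cref{lem:xy*z} entirely and is cleaner as a pure regularity proof, at the cost of producing an automaton only after a complementation of a concatenation, hence with no control on its size. Your inductive proof of the suffix characterisation is also sound: the forward direction correctly combines \eqref{eq:val-greedy} with \cref{lem:Rep-Val-RadixOrder} (treating the leading-zero case separately), and the reverse direction is a valid strong induction since the suffix $u_{j-1}\cdots u_0$ at the first differing position has length $j<n$ and inherits all the lexicographic constraints; your remark that the na\"ive non-inductive shortcut fails is exactly the right caveat.
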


For the sake of completeness, we reproduce Shallit's proof of the first item from \cite{Shallit:1994}, and give a new construction of an automaton accepting a numeration language built from the knowledge of the lexicographically maximal words (provided that they form a regular language). Another construction of such an automaton was proposed in~\cite{Hollander:1998}, but the resulting automaton contains far more states as the one we describe here, since it is obtained as a product of several automata. 

Also, note that for an arbitrary language $L$, it is not the case that the regularity of the language $\mathrm{Max}(L)$ implies that of $L$. For example, the Dyck language of well-parenthesized binary words is well known to be non-regular whereas its language of maximal words with the order $0<1$ is $(01)^*$, hence is regular. Hence, the hypothesis that the language is derived from a numeration system is important, as such languages have some structure with respect to the lexicographic order. 

We first recall two useful lemmas. The first one, already in \cite{Hollander:1998}, gives a characterization of the words in $L_U$ in terms of a lexicographic condition of their suffixes.

\begin{lemma}
\label{lem:Suffixes}
Let $U$ be a positional numeration system. A word $w_{\ell-1}\cdots w_0\in A_U^*$ belongs to $L_U$ if and only if 
\[
    w_{n-1}\cdots w_0\lex \rep_U(U_n-1) 
\]
for all $n\in\{0,\ldots,\ell\}$.
\end{lemma}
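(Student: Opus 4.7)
The strategy is to translate the condition $w_{\ell-1}\cdots w_0 \in L_U$ via the suffix characterization \eqref{eq:val-greedy}, and then show that, at each index $n$, the numerical condition $\val_U(w_{n-1}\cdots w_0) < U_n$ is equivalent to the lexicographic condition $w_{n-1}\cdots w_0 \lex \rep_U(U_n-1)$. The case $n=0$ is vacuous. For $n \geq 1$, since $U$ is strictly increasing we have $U_{n-1} \leq U_n - 1 < U_n$, so $\rep_U(U_n-1)$ has length exactly $n$ and its leading digit $a_{n-1} = \lfloor (U_n-1)/U_{n-1}\rfloor$ is at least $1$; thus the two words being compared have the same length.

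For the forward direction, assume $\val_U(w_{n-1}\cdots w_0) < U_n$. If $w_{n-1} = 0$, then $w_{n-1}\cdots w_0 <_{\mathrm{lex}} \rep_U(U_n-1)$ since the latter starts with a nonzero digit. Otherwise, $w := w_{n-1}\cdots w_0 \in A_U^* \setminus 0A_U^*$, and the contrapositive of \cref{lem:Rep-Val-RadixOrder} applied with $x = U_n - 1$ yields $w \leq_{\mathrm{rad}} \rep_U(U_n-1)$; since both words have length $n$, the radix order reduces to the lexicographic order.

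For the backward direction, I would proceed by induction on $n$, assuming the full family of lexicographic inequalities holds and the value inequality $\val_U(w_{k-1}\cdots w_0) < U_k$ has already been established for every $k < n$. Write $\rep_U(U_n-1) = a_{n-1}\cdots a_0$. Either $w_{n-1}\cdots w_0 = \rep_U(U_n-1)$, in which case the value equals $U_n - 1$, or there is an index $k$ such that $w_i = a_i$ for $i > k$ and $w_k < a_k$. In the latter case, combining $w_k \leq a_k - 1$ with the inductive bound $\val_U(w_{k-1}\cdots w_0) \leq U_k - 1$ telescopes to
\[
    \val_U(w_{n-1}\cdots w_0) \leq \sum_{i=k+1}^{n-1} a_i U_i + (a_k - 1)U_k + (U_k - 1) = \sum_{i=k}^{n-1} a_i U_i - 1 \leq U_n - 2 < U_n.
\]

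The main subtlety lies in this backward direction: the lexicographic inequality at index $n$ alone does not control $\val_U(w_{n-1}\cdots w_0)$, since a single digit at a deeper position could \emph{a priori} be very large relative to its base. It is precisely the hypothesis that a lexicographic inequality holds at every suffix length that makes the telescoping estimate valid, through the inductive control of the tail $\val_U(w_{k-1}\cdots w_0)$.
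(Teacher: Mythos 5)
Your proof is correct and follows essentially the same route as the paper: the necessary direction reduces, via \cref{lem:Rep-Val-RadixOrder} (which is also the engine behind \cref{lem:Max-Words}, the paper's cited source), to comparing words of equal length, and the sufficient direction is exactly the induction on suffix length via \eqref{eq:val-greedy} that the paper sketches, with your telescoping estimate supplying the details. No gaps.
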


\begin{proof}
The necessary condition follows from the fact that $L_U$ is suffix-closed and \cref{lem:Max-Words}. The sufficient condition is obtained by using~\eqref{eq:val-greedy} and an induction on the length of the words.
\end{proof}

The second lemma, proved in \cite{Shallit:1994}, provides a useful decomposition of \emph{slender} regular languages, i.e., containing a bounded number of words of each length. 

\begin{lemma}
\label{lem:xy*z}
A language is slender and regular if and only if it is a finite union of languages of the form $xy^*z$.
\end{lemma}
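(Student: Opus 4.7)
The plan is to verify the two implications separately. For the reverse direction, observe that each language $xy^*z$ is regular, so a finite union of such languages is regular too, and each $xy^*z$ contains at most one word of each length: exactly the single word $xz$ if $y=\varepsilon$, and otherwise lengths $|x|+k|y|+|z|$ for $k\in\N$ are pairwise distinct. A union of $k$ such languages is thus slender, containing at most $k$ words per length.

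For the forward direction, I would start from a trimmed deterministic finite automaton $M=(Q,\Sigma,\delta,q_0,F)$ recognizing $L$, where trimmed means every state lies on some accepting computation. The proof rests on two structural consequences of slenderness, each derived by a pumping-style argument. First, every state of $M$ lies on at most one elementary cycle: if some state $q$ were the origin of two distinct elementary cycles with labels $u\ne v$, then picking an access word $\alpha$ from $q_0$ to $q$ and a continuation $\beta$ from $q$ to some final state, the $2^N$ words of the form $\alpha\, w_1\cdots w_N\, \beta$ with each $w_i\in\{u^{|v|},v^{|u|}\}$ all lie in $L$ and share the common length $|\alpha|+N|u||v|+|\beta|$; these words are pairwise distinct because the labels of elementary cycles are necessarily primitive, so by the Fine--Wilf theorem $u^{|v|}\ne v^{|u|}$, allowing unique block decoding. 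Second, no simple path from $q_0$ to an accepting state visits two distinct states both carrying a cycle: otherwise, fixing such a skeleton and varying the two corresponding cycle exponents under the constraint of a fixed total length $\ell$ yields $\Theta(\ell)$ pairwise distinct labels of length $\ell$, again contradicting slenderness.

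With these two constraints, every accepting computation of $M$ decomposes uniquely as $\alpha\, c^k\, \beta$, where $\alpha\beta$ is the label of a simple skeleton path from $q_0$ to some accepting state and $c$ is either the unique elementary cycle available at the at-most-one cycle-bearing state visited along this skeleton, or the empty word when no such state is present. Grouping accepting computations by their skeleton gives a decomposition of $L$ as a finite union of languages $\alpha c^* \beta$, since $M$ has only finitely many simple paths. The main obstacle in this scheme is the first pumping argument: one must ensure that distinct binary sequences in $\{u^{|v|},v^{|u|}\}^N$ give rise to distinct concatenations, which reduces to establishing the primitivity of elementary cycle labels. This in turn follows from the observation that in a deterministic automaton an elementary cycle with non-primitive label $p^j$ (with $j\ge 2$) would revisit an intermediate state after reading only the prefix $p$, violating the elementarity hypothesis.
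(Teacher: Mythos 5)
The paper does not actually prove this lemma; it is imported from Shallit's 1994 article, so there is no in-paper argument to compare yours against. Your architecture (analyse the cycle structure of a trimmed DFA) is the standard route and the backward implication is fine, but both pillars of your forward direction fail as written. First, labels of elementary cycles in a DFA need \emph{not} be primitive: the cycle $q\xrightarrow{a}r\xrightarrow{a}q$ is elementary with label $aa$. Your justification confuses $\delta(q,p^j)=q$ with $\delta(q,p)=q$; reading the prefix $p$ need not return to any previously visited state. Since you explicitly single out primitivity as the crux of the distinctness of the $2^N$ pumped words, this is a genuine gap. The inequality $u^{|v|}\ne v^{|u|}$ that you actually need is nevertheless true: if $u^{|v|}=v^{|u|}$ then $u=t^i$ and $v=t^j$ for a common primitive $t$, and determinism plus elementarity force $i=j=m$ where $m$ is the least exponent with $\delta(q,t^m)=q$, whence $u=v$ and the two cycles coincide. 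Alternatively you can bypass word combinatorics entirely: in a DFA, distinct paths from the initial state spell distinct words, so it suffices to exhibit $2^N$ distinct accepting \emph{paths} of equal length, and the paths diverge at the first block where the two patterns differ.

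Second, your structural claim that no simple accepting path visits two distinct cycle-bearing states is false: in the four-state DFA for $d(ab)^*ag$, the skeleton $q_0\xrightarrow{d}p_1\xrightarrow{a}p_2\xrightarrow{g}f$ passes through both states of the cycle $p_1\to p_2\to p_1$, yet the language is slender. Your pumping argument only excludes two states lying on \emph{distinct} cycles; when both lie on the same cycle $C$ and the skeleton runs along $C$ between them, the two pumped factors are conjugate ($u\beta=\beta v$) and the words $\alpha u^{k_1}\beta v^{k_2}\gamma$ collapse to $\alpha u^{k_1+k_2}\beta\gamma$, so no extra words of a given length are produced. Consequently your final decomposition ("the at-most-one cycle-bearing state visited along this skeleton") is also incorrect: the skeleton may traverse part of a single cycle, and you must argue (using your first claim, which does forbid chords and detours between states of one cycle) that all the winding can be gathered into a single inserted factor $c^*$, which is precisely what yields the form $xy^*z$. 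With these two repairs the proof goes through, but as submitted both key steps rest on false assertions.
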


\begin{proof}[Proof of~\cref{prop:RegIffMaxReg}]
    We reproduce Shallit's simple argument from \cite{Shallit:1994} for proving the regularity of the language of lexicographically maximal words of each length of a regular language. Let $L$ be an arbitrary regular language written over a totally ordered alphabet $(A,<)$. In order to show that $\mathrm{Max}(L)$ is also regular, we can equivalently show that $L\setminus \mathrm{Max}(L)$ is regular. Let $\mathcal{A}=(Q,i,F,A,\delta)$ be a deterministic finite automaton accepting $L$. Consider now the following non-deterministic finite automaton: $\mathcal{B}=(Q\times Q\times \{e,\ell\},(i,i,e),F\times F\times \{\ell\},A,R)$, where the transition relation $R$ contains the following transitions:
    \begin{itemize}
        \item $(p,q,e)\overset{a}{\longrightarrow} (\delta(p,a),\delta(q,a),e)$ for all $p,q\in Q$ and $a\in A$;
        \item $(p,q,e)\overset{a}{\longrightarrow} (\delta(p,a),\delta(q,b),\ell)$ for all $p,q\in Q$ and $a,b\in A$ with $a<b$;
        \item $(p,q,\ell)\overset{a}{\longrightarrow} (\delta(p,a),\delta(q,b),\ell)$ for all $p,q\in Q$ and $a,b\in A$.
    \end{itemize}
    Transitions in the second component of $Q$ nondeterministically guess a word of $L$ that is greater than the word read by $\mathcal{B}$. As a result, a given word is accepted by $\mathcal{B}$ if and only if it belongs to $L$ and there exists a lexicographically greater word in $L$ of the same length: $\mathcal{B}$ accepts exactly the words in $L\setminus \mathrm{Max}(L)$.

    \medskip
    Let us now turn to the second item. The proof is constructive, and will be illustrated in \cref{Ex:Automaton} below.
    Let $U$ be a positional numeration system and suppose that the lexicographically maximal words of each length in $L_U$ form a regular language. \cref{lem:xy*z} combined with elementary arithmetical considerations implies that this language can be decomposed in the following way:
    \[
        \Max(L_U)=F\cup \left(\bigcup_{j=0}^{d-1}x_jy_j^*z_j\right)
    \]
    where the unions are disjoint, $F$ is a finite language, $d\ge 1$, and for all $j$, we have $x_j,y_j,z_j\in A_U^*$, $x_j\ne \varepsilon$, $|y_j|=d$, $|x_jz_j|\equiv j\pmod d$, $|x_{j+1}z_{j+1}|=|x_jz_j|+1$, and $y_j$ and $z_j$ do not share the same first letter. Let $n=|x_0z_0|$, so that we have $F=\{\rep_{U}(U_{\ell}-1) : \ell< n\}$ and $|x_jz_j|< n+d$ for all $j\in\{0,\ldots,d-1\}$. 
    
    \smallskip
    We now construct an NFA $\mathcal{A}$ as follows. 
    For each $j\in\{0,\ldots,d-1\}$, we consider two finite automata $P_j$ and $S_j$, each accepting finitely many words:
    \begin{itemize}
        \item $P_j$ accepts the words $w\in L_U$ such that $|w|< n$ and $|w|\equiv j\pmod d$.
        \item $S_j$ accepts the words $w\in L_U$ such that $|w|=|z_j|$ and $w\lex z_j$.
    \end{itemize}
    We add the states $(j,k)$ for all $j\in\{0,\ldots,d-1\}$ and $k\in\{0,\ldots,|x_jy_j|-1\}$, and the following transitions, with $x_jy_j=a_{j,1}\cdots a_{j,|x_jy_j|}$ where each $a_{j,k}$ is a letter:
    \begin{itemize}
        \item $(j,k)\xrightarrow{a_{j,k+1}} (j,k+1)$ for all $k\in \{0,\ldots,|x_jy_j|-2\}$;
        \item $(j,|x_jy_j|-1)\xrightarrow{a_{j,|x_jy_j|}} (j,|x_j|)$;
        \item $(j,k)\overset{a}{\longrightarrow} ((j-k-1)\bmod d,0)$ for all $k\in \{0,\ldots,|x_jy_j|-1\}$ and $a<a_{j,k+1}$.
    \end{itemize}
    Finally, for each $j\in\{0,\ldots,d-1\}$, we add 
    \begin{itemize}
        \item an $\varepsilon$-transition from $(j,0)$ to the initial state of the automaton $P_j$;
        \item an $\varepsilon$-transition from $(j,|x_j|)$ to the initial state of the automaton $S_j$.
    \end{itemize}
    The initial states are the states $(j,0)$ and the final states are the final states of $P_j$ and $S_j$ for all $j\in\{0,\ldots,d-1\}$.
    
    \smallskip
    We claim that
    \[
        L_U=L(\mathcal{A})\setminus\, (K_1\cup K_2)
    \]
    where $L(\mathcal{A})$ denotes the language accepted by the NFA $\mathcal{A}$, and $K_1$ and $K_2$ are the following regular languages:
    \begin{align*}
        K_1 &=\bigcup_{\ell=1}^{n+d-1} A_U^*\{s \in A_U^\ell: s>_{\mathrm{lex}} \rep_{U}(U_{\ell}-1)\} \\
        K_2 &=\bigcup_{j=0}^{d-1} A_U^*x_jy_j^*\{s \in A_U^{|z_j|}: s>_{\mathrm{lex}} z_j\}.
    \end{align*}
    By the stability properties of regular languages, this will imply that $L_U$ is a regular language.
    
    \smallskip
    We start with the inclusion $L_U\subseteq L(\mathcal{A})\setminus\, (K_1\cup K_2)$. Let $w\in L_U$.  We show by induction on its length that $w$ is accepted by $\mathcal{A}$ from the state $(j,0)$ where $j=|w|\bmod d$. Our base case is as follows: if $|w|< n$ then $w$ is accepted by $\mathcal{A}$ by first following the $\varepsilon$-transition from $(j,0)$ to the initial state of $P_j$, and then is accepted by $P_j$ by definition. Note that since $n\ge d$, the base case contains all possible lengths modulo $d$. Now, suppose that $|w|\ge n$ and that all shorter words $s$ of $L_U$ are accepted by $\mathcal{A}$ from the state $(k,0)$ where $k=|s|\bmod d$. We know that $\rep_{U}(U_{|w|}-1)\in x_jy_j^*z_j$. More precisely, we have $\rep_{U}(U_{|w|}-1)= x_jy_j^m z_j$ with $m=(|w|-|x_jz_j|)/d$. \cref{lem:Suffixes} yields that $w\lex \rep_{U}(U_{|w|}-1)$. If $x_jy_j^m$ is a prefix of $w$, i.e., $w=x_jy_j^ms$, then the suffix $s$ is such that $s\in L_U$, $|s|=|z_j|$ and $s\lex z_j$. In this case, the word $w$ is accepted by first following the path labeled by $x_jy_j^m$ from $(j,0)$ to $(j,|x_j|)$, then by taking the $\varepsilon$ transition from $(j,|x_j|)$ to the initial state of $S_j$, and then following the unique path labeled by $s$ in $S_j$. Since $S_j$ accepts $s$ by definition, this shows that $w$ is accepted by $\mathcal{A}$ from $(j,0)$. Now suppose that $w=pas$, $\rep_{U}(U_{|w|}-1)=pbs'$ with $|p|<|x_jy_j^m|$, $a,b\in A_U$, $a<b$. Then there is a path reading $pa$ from $(j,0)$ to $(k,0)$ where $k=(|w|-|pa|)\bmod d=|s|\bmod d$. By induction hypothesis, the suffix $s$ is accepted from $(k,0)$, hence $w$ is accepted by $\mathcal{A}$ from $(j,0)$. Moreover, \cref{lem:Suffixes} also implies that $w$ does not belong to $K_1$ nor $K_2$.
    
    \smallskip
    We now turn to the converse inclusion. Let $w\in L(\mathcal{A})\setminus\, (K_1\cup K_2)$. Then no suffix of $w$ belongs to $K_1\cup K_2$ either. Again, we proceed by induction on the length of the words.  
    By construction of the automaton $\mathcal{A}$, the word $w$ must be accepted from the state $(j,0)$ where $j=|w|\bmod d$. If $|w|< n$, then $w\in L_U$ since $w\notin K_1$. Now, suppose that $w$ is an accepted word of length $|w|\ge n$, and that all shorter words accepted by $\mathcal{A}$ but not belonging to $K_1$ nor $K_2$ belong to $L_U$. Since $|w|\ge n$, the lexicographically greatest word in $L_U$ of length $|w|$ is $\rep_{U}(U_{|w|}-1)=x_jy_j^mz_j$ with $j=|w|\bmod d$ and $m=(|w|-|x_jz_j|)/d$. Let us compare $w$ and $\rep_{U}(U_{|w|}-1)$. 
    
    If these two words first differ in the prefix of length $|x_jy_j^m|$, then by construction of $\mathcal{A}$, the first differing letter in $w$ must be less than the corresponding letter in $\rep_{U}(U_{|w|}-1)$, i.e., $w=pas$, $\rep_{U}(U_{|w|}-1)=pbs'$, $|p|<|x_jy_j^m|$, $a,b\in A_U$, $a<b$. Our accepting path must start in $(j,0)$, and after reading the prefix $pa$, ends in the state $(k,0)$ where $k=(|w|-|pa|)\bmod d=|s|\bmod d$. Therefore, the suffix $s$ is accepted by $\mathcal{A}$ from this state $(k,0)$. 
    Since $s\notin K_1\cup K_2$, we may apply the induction hypothesis. We get that $s$ belongs to $L_U$. 
    Now, consider a suffix $t$ of $w$ such that $|t|>|s|$. Then $p=p_1p_2$ and $t=p_2as<_{\mathrm{lex}} p_2bs'$. Since $p_2bs'$ is a suffix of $\rep_{U}(U_{|w|}-1)$, it satisfies $p_2bs'\lex \rep_{U}(U_{|t|}-1)$ by \cref{lem:Suffixes}. Hence $t\lex \rep_{U}(U_{|t|}-1)$. By \cref{lem:Suffixes} again, we obtain that $w\in L_U$. 
    
    We are left with the case where $w$ and $\rep_{U}(U_{|w|}-1)$ are either equal or differ in the last $|z_j|$ digits. This means that $w=x_jy_j^ms$ with $|s|=|z_j|$. Since $w\notin K_1\cup K_2$ and $|s|<n+d$, there are strong restrictions on the suffix $s$: it must satisfy $s\in L_U$ and $s\lex z_j$. Therefore, as in the previous paragraph, all suffixes $t$ of $w$ satisfy $t\lex \rep_{U}(U_{|t|}-1)$, which implies that $w\in L_U$ by \cref{lem:Suffixes}.
    \end{proof}

    In order to provide an example illustrating the proof of \cref{prop:RegIffMaxReg}, we show how to build an ad hoc positional numeration system from a list of candidate maximal words.

\begin{lemma}
\label{lem:Candidate-Max-Words}
    Let $M$ be a language over a (finite) alphabet included in $\N$. There exists a positional numeration system $U$ such that $M=\Max(L_U)$ if and only if the language $M$ satisfies the following properties:

    \smallskip
    \begin{itemize}\setlength\itemsep{0.7em}
        \item $M$ has exactly one word of each length.
        \item No word in $M$ starts with the digit $0$.
        \item For all words $u_{n-1}\cdots u_0$ and $v_{\ell-1}\cdots v_0$ in $ M$ with $n< \ell$, we have $v_{n-1}\cdots v_0\lex u_{n-1}\cdots u_0$.
    \end{itemize}

    \smallskip
    Furthermore, if such a positional numeration system $U$ exists, then it is unique. 
\end{lemma}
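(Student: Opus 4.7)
The plan is to establish necessity, sufficiency, and uniqueness in turn.

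For necessity, I would invoke \cref{lem:Max-Words}, which gives $\Max(L_U)=\{\rep_U(U_n-1):n\in\N\}$. Condition~1 then follows because $|\rep_U(U_n-1)|=n$ (as $U_{n-1}\le U_n-1<U_n$); Condition~2 because the greedy representation of any positive integer has a nonzero leading digit; and Condition~3 because $L_U$ is suffix-closed, so the length-$n$ suffix of any $v_{\ell-1}\cdots v_0\in L_U$ lies in $L_U$ and is therefore dominated lexicographically by $\rep_U(U_n-1)$.

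For sufficiency, I would denote by $w_n=a_{n-1}^{(n)}\cdots a_0^{(n)}$ the unique word of length $n$ in $M$, and define $U$ recursively by $U_0=1$ and
\[
    U_n=1+\sum_{k=0}^{n-1}a_k^{(n)}U_k\quad\text{for }n\ge 1,
\]
so that by construction $\val_U(w_n)=U_n-1$. Condition~2 gives $a_{n-1}^{(n)}\ge 1$, whence $U_n\ge 1+U_{n-1}>U_{n-1}$, making $U$ a strictly increasing sequence of positive integers. The crucial technical step, which I expect to be the main obstacle, is to prove by strong induction on $|s|$ that every suffix $s$ of any $w_n$ satisfies $\val_U(s)<U_{|s|}$. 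The case $|s|=0$ is trivial; for the inductive step, if $s=w_{|s|}$ the bound holds by construction, while otherwise condition~3 gives $s<_{\mathrm{lex}} w_{|s|}$, and writing $k$ for the first position of disagreement and $s_{k-1}\cdots s_0$ for the corresponding shorter suffix of $s$, one obtains
\[
    \val_U(w_{|s|})-\val_U(s)\ge U_k-\val_U(s_{k-1}\cdots s_0)>0,
\]
the last inequality being the induction hypothesis applied at length $k<|s|$.

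From this suffix bound, two things follow immediately. First, applying the greedy algorithm to $U_n-1$ recovers each digit $a_k^{(n)}$ in turn, so $\rep_U(U_n-1)=w_n$; by \cref{lem:Max-Words} this gives $\Max(L_U)=M$. Second, the same bound applied to suffixes of $w_{n+1}$ yields $U_{n+1}<(c+2)U_n$, where $c$ is the largest digit occurring in $M$, so the quotients are uniformly bounded and $U$ is a bona fide positional numeration system. For uniqueness, any other $U'$ with $\Max(L_{U'})=M$ must by \cref{lem:Max-Words} satisfy $\val_{U'}(w_n)=U'_n-1$, which together with $U'_0=1$ forces the same recurrence and hence $U'=U$.
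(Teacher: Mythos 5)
Your proof is correct and follows essentially the same route as the paper: define $U_n=\val_U(w_n)+1$ recursively and verify by induction that $w_n=\rep_U(U_n-1)$. The only cosmetic difference is that you inline the suffix-value bound $\val_U(s)<U_{|s|}$ (i.e., the content of \cref{lem:Suffixes} via \eqref{eq:val-greedy}) as an explicit strong induction, whereas the paper cites \cref{lem:Suffixes} and \cref{lem:Rep-Val-RadixOrder} directly.
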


\begin{proof}
    The necessary condition and the uniqueness of the system are straightforward. Let us prove the sufficient condition. So, we suppose that the language $M$ satisfies the three properties given in the statement and we show how to build a positional numeration system $U$ such that $M=\Max(L_U)$.

    For each $n\ge 0$, let $w_n$ denote the unique word of length $n$ in $M$ and let $U_n=\val_U(w_n)+1$. In particular, $w_0$ is the empty word and $U_0=1$. Since the computation of $\val_U(w_n)$ only requires the knowledge of $U_0,\ldots,U_{n-1}$, the terms of the sequence $U=(U_n)_{n\ge 0}$ are obtained recursively. As we have assumed that no word in $M$ starts with $0$, this sequence is increasing. We have to show that for all $n\ge 0$, the word $w_n$ belongs to $L_U$, which is equivalent to show that $w_n=\rep_U(U_n-1)$. In particular, we will also obtain that the quotients $\frac{U_{n+1}}{U_n}$ are uniformly bounded by $C+1$ where $C$ is the maximal element of the alphabet of $M$.
    
    We proceed by induction on $n$. For $n=0$, this is clear. Suppose that $n\ge 1$ and that we have $w_\ell=\rep_U(U_\ell-1)$ for all $\ell<n$. Write $w_n=w_{n,n-1}\cdots w_{n,0}$ where the $w_{n,\ell}$'s are letters. By \cref{lem:Suffixes}, we have to show that $w_{n,\ell-1}\cdots w_{n,0}\lex \rep_U(U_\ell-1)$ for all $\ell\le n$. For $\ell<n$, by using the properties of the language $M$ and the induction hypothesis, we get that $w_{n,\ell-1}\cdots w_{n,0}\lex w_\ell=\rep_U(U_\ell-1)$. For $\ell=n$, since $\val_U(w_n)=U_n-1$ by definition of $U$ and $w_n$ does not start with $0$, we get that $w_n\lex \rep_U(U_n-1)$ from \cref{lem:Rep-Val-RadixOrder}.     
\end{proof}

\begin{example}
    \label{Ex:Automaton}
    Let $M$ be the following regular language, which is intended to provide the candidate maximal words of some positional numeration system according to \cref{lem:Candidate-Max-Words}:
    \[
        M = \ 21(11)^*00 
        \ \cup \ 2101(01)^*1
        \ \cup \ \{\varepsilon,1,11,101\}.
    \]
    With the notation of the proof of \cref{prop:RegIffMaxReg}, we have     
    \[
        d=2,\ n=4,\ 
        x_0=21,\ y_0=11,\ z_0=00,\ 
        x_1=2101,\ y_1=01,\ z_1=1.
    \]
    By \cref{lem:Candidate-Max-Words}, there exists a unique positional system $U=(U_n)_{n\ge 0}$ such that $M=\Max(L_U)$. One can check that it is given by the recurrence relation $U_{n+9}=8U_{n+7}-10U_{n+5}+2U_{n+3}$ for $n\ge 0$ and the initial values $(U_0,\ldots,U_8)=(1,2,4, 6, 17, 44, 116, 286, 760)$. In \cref{fig:automaton}, we have drawn the automaton described in the proof of \cref{prop:RegIffMaxReg}. The finitely many words accepted by the automata $P_0,P_1,S_0,S_1$ are listed explicitly. The initial states are marked with an incoming arrow. The accepting path all end in one of the automata $P_0,P_1,S_0,S_1$.
    \begin{figure}[htb]
\centering
\begin{tikzpicture}
\tikzstyle{every node}=[shape=rectangle, fill=none, draw=black,minimum size=20pt, inner sep=2pt]
\node(00) at (0,3) {$0,0$};
\node(01) at (2,3) {$0,1$} ;
\node(02) at (4,3) {$0,2$} ;
\node(03) at (6,3) {$0,3$} ;
\node(10) at (0,0) {$1,0$} ;
\node(11) at (2,0) {$1,1$};
\node(12) at (4,0) {$1,2$} ;
\node(13) at (6,0) {$1,3$} ;
\node(14) at (8,0) {$1,4$} ;
\node(15) at (10,0) {$1,5$} ;

\node(P0) at (-2,5) {$P_0:\varepsilon,00,01,10,11$};
\node(S0) at (7,5) {$S_0: 00$};
\node(P1) at (-2,-2.5) [text width=2.2cm] {$P_1:0,1,000$, \\
$001,010,011$, \\
$100,101$};
\node(S1) at (9,-2.5) {$S_1: 0,1$};

\tikzstyle{every path}=[color=black, line width=0.5 pt]
\tikzstyle{every node}=[shape=circle]
% fleche des états initiaux
\draw [-Latex] (-1,3) to node {} (00);
\draw [-Latex] (-1,0) to node {} (10);
% autres flèches
\draw [-Latex] (00) to node [above] {$2$} (01);
\draw [-Latex] (01) to node [above] {$1$} (02);
\draw [-Latex] (02) to node [above] {$1$} (03);
\draw [-Latex] (03) to [bend left=15] node [below] {$1$} (02) ;
\draw [-Latex] (10) to node [above] {$2$} (11);
\draw [-Latex] (11) to node [above] {$1$} (12);
\draw [-Latex] (12) to node [above] {$0$} (13);
\draw [-Latex] (13) to node [above] {$1$} (14);
\draw [-Latex] (14) to node [above] {$0$} (15);
\draw [-Latex] (15) to [bend left=15] node [below] {$1$} (14) ;

\draw [-Latex] (00) to [bend right=15] node [left] {$0,1$} (10) ;
\draw [-Latex] (10) to [bend right=15] node [right] {$0,1$} (00) ;
\draw [-Latex] (01) to [bend left=15] node [below] {$0$} (00) ;
\draw [-Latex] (11) to [bend left=15] node [below] {$0$} (10) ;
\draw [-Latex] (02) to node [below] {$0$} (10) ;
\draw [-Latex] (03) to [bend left=40] node [below] {$0$} (00) ;
\draw [-Latex] (13) to [bend left=40] node [below] {$0$} (10) ;
\draw [-Latex] (15) to [bend left=52] node [below] {$0$} (10) ;

% flèches vers P S
\draw [-Latex] (00) to node [right] {$\varepsilon$} (P0);
\draw [-Latex] (02) to node [near end,left] {$\varepsilon$} (S0);
\draw [-Latex] (10) to node [right] {$\varepsilon$} (P1);
\draw [-Latex] (14) to node [left] {$\varepsilon$} (S1);

\end{tikzpicture}
\caption{The non-deterministic automaton  $\mathcal{A}$ built from the set of maximal words $M=\ 21(11)^*00 
        \ \cup \ 2101(01)^*1
        \ \cup \ \{\varepsilon,1,11,101\}$.}
\label{fig:automaton}
\end{figure}
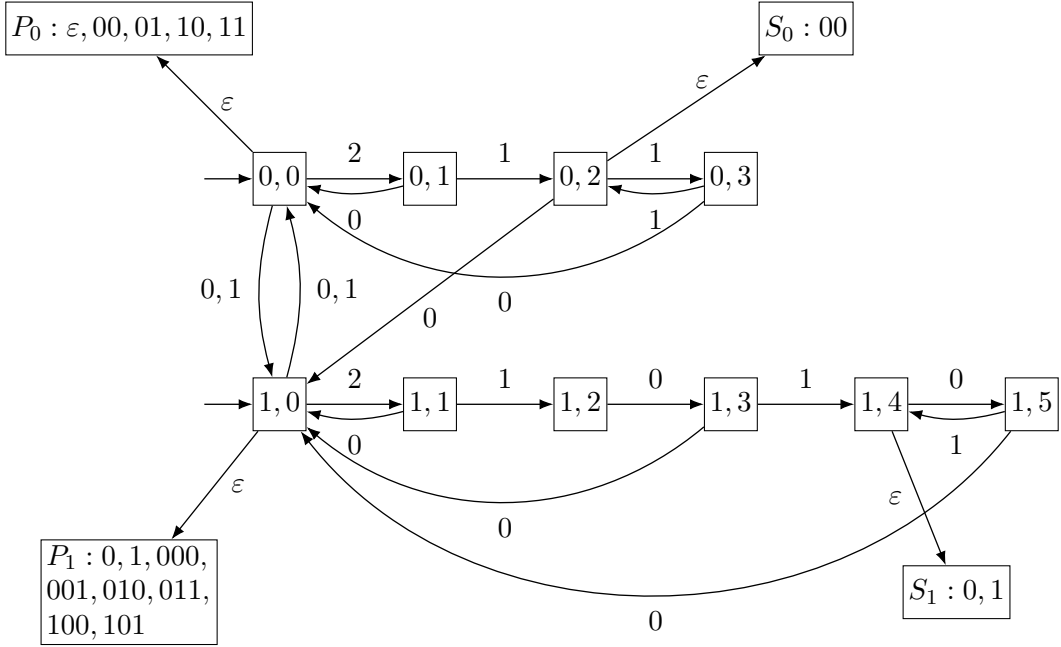

This example shows that words in $K_1\cup K_2$ may be accepted by the automaton $\mathcal{A}$ despite the fact that they do not belong to $L_U$. For example, the words in $21(11)^*01$ are accepted by $\mathcal{A}$ by following a path starting in the state $(0,0)$ and ending in $P_1$. However they do not belong to $L_U$ as they are lexicographically greater than the maximal corresponding words, which belong to $21(11)^*00$. Since the suffix $01$ is lexicographically greater than $z_0=00$, one has $21(11)^*01\subset K_2$ . Now, consider the word $111$. It is accepted by $\mathcal{A}$ by following the path from $(1,0)$ to $(0,0)$ labeled by $1$, and then taking the $\varepsilon$-transition going to $P_0$, which accepts $11$. This word $111$ belongs to $K_1$ as it is of length $3<n+d=6$ and is lexicographically greater than $101$, the maximal word of length $3$.
    \end{example}

Studying the regularity of the language $\mathrm{Max}(L_U)$ is easier than that of $L_U$ itself since $\mathrm{Max} (L_U)$ is a \emph{thin} language, that is, it contains at most one word of each length \cite{Paun&Salomaa:1995}. This allows us to use \cref{lem:xy*z}.

We introduce one last straightforward lemma, which will allow us to separate the study of the regularity of $\mathrm{Max} (L_U)$ according to the residue class modulo $p$ for some suitable $p$. 

\begin{lemma}
\label{lem:SplitModp}
Let $L$ be a language over a finite alphabet $A$ and let $p,N\in\N$ with $p\ge 1$. The language $L$ is regular if and only if the $p$ languages 
\[
    \{w\in L : |w|\equiv i \pmod p,\ |w|\ge N\}
\]
are all regular.
\end{lemma}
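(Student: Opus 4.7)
The plan is a direct application of the standard closure properties of regular languages. The key observation is that for any fixed $p \geq 1$ and $0 \leq i \leq p-1$, the language $C_{i,p} := \{w \in A^* : |w| \equiv i \pmod p\}$ is regular, since it is recognized by a deterministic automaton with $p$ states that cycles through residues modulo $p$. Similarly, the language $T_N := \{w \in A^* : |w| \geq N\}$ is regular, as its complement $\{w \in A^* : |w| < N\}$ is finite.

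For the forward implication, I would simply write each of the $p$ languages as
\[
    \{w \in L : |w| \equiv i \pmod p,\ |w| \geq N\} = L \cap C_{i,p} \cap T_N,
\]
which is regular as an intersection of three regular languages.

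For the converse, I would write
\[
    L = \{w \in L : |w| < N\} \ \cup\ \bigcup_{i=0}^{p-1} \{w\in L : |w|\equiv i \pmod p,\ |w|\ge N\}.
\]
The first set on the right-hand side is finite, hence regular, and the remaining $p$ sets are regular by hypothesis. Since regular languages are closed under finite unions, $L$ is regular.

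No step here is a real obstacle: the argument is entirely by closure properties, and there is no combinatorial subtlety. The only thing to be careful about is to explicitly mention the finite-union decomposition of $L$ so that the lower bound $|w| \geq N$ in the hypothesis is handled correctly.
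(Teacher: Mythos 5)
Your proof is correct, and it is precisely the standard closure-property argument the authors have in mind: the paper states this lemma as ``straightforward'' and omits the proof entirely. Nothing further is needed.
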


\section{Third step: Emergence of alternate bases associated with regular numeration systems}
\label{sec:alternate-bases}

Alternate bases are particular Cantor real bases that were independently introduced in~\cite{Caalim&Demegillo:2020,Charlier&Cisternino:2021}. As it turns out, the first appearance of these alternating systems of real bases which gave rise to the study~\cite{Charlier&Cisternino:2021} is due to the observations reported in this section. See \cite{Charlier:2023} for a survey on alternate bases.  

\begin{proposition}
\label{prop:U->beta}
Let $U=(U_n)_{n\ge0}$ be a positional numeration system such that $L_U$ is regular. There exists a positive integer $p$ such that the $p$ limits, for $i\in\{0,\ldots,p-1\}$,
\[
    \lim_{n\to+\infty}\frac{U_{np-i}}{U_{np-i-1}}
\]
exist and can be effectively computed. In particular, the limit 
\[
    \lim_{n\to+\infty}\frac{U_{n}}{U_{n-p}}
\]
exist and is equal to the product of the $p$ above limits.
\end{proposition}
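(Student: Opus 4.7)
My strategy is to combine the structural description of $\Max(L_U)$ provided by the earlier results with the linearity of $U$ in order to pin down the asymptotic behavior of $(U_n)$ along arithmetic progressions modulo a suitable period.

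By \cref{prop:RegIffMaxReg}, regularity of $L_U$ transfers to $\Max(L_U)$, which by \cref{lem:Max-Words} contains exactly one word per length and is therefore slender. Applying \cref{lem:xy*z} yields a finite disjoint decomposition
\[
    \Max(L_U) = F \cup \bigcup_{j=0}^{d-1} x_j\, y_j^{*}\, z_j,
\]
where all the words $y_j$ share a common length $d$; I set $p := d$. Standard automata constructions make this decomposition effectively computable from any DFA for $L_U$, so $p$ is effectively known.

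For each residue class $j \in \{0,\ldots,p-1\}$ and $n \equiv j \pmod p$ large enough, \cref{lem:Max-Words} identifies $\rep_U(U_n-1) = x_j y_j^m z_j$ with $m = (n-|x_jz_j|)/p$. Expanding the positional identity $U_n - 1 = \val_U(x_j y_j^m z_j)$ and subtracting the analogous identity at level $m+1$ yields, by routine computation, an explicit integer linear relation on $(U_n)$ whose coefficients depend only on the digits of $x_j$ and $y_j$, valid for $n \equiv j \pmod p$ sufficiently large. Assembling the $p$ such relations and combining them with the linearity of $U$ (\cref{prop:reg->linear}), one obtains a matrix recurrence $V_{m+1} = M V_m$, where $V_m$ is a vector of $p$ consecutive $U$-values and $M$ is an effectively computable integer matrix.

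A Perron--Frobenius-type spectral analysis of $M$, leveraging the positivity and strict monotonicity of $(U_n)$, then yields a simple positive dominant eigenvalue $\rho$ with positive eigenvector $\mathbf{v}$ such that $V_m / \rho^m$ converges to a positive multiple of $\mathbf{v}$. Reading off the components produces each of the $p$ limits $\lim_{n\to\infty} U_{np-i}/U_{np-i-1}$ as an effectively computable algebraic number, and telescoping the product yields $\lim_{n\to\infty} U_n/U_{n-p} = \rho$, equal to the product of the $p$ limits. The main obstacle lies in this spectral step: ruling out additional eigenvalues of modulus $\rho$, which would destroy the individual residue-class convergence, requires carefully exploiting the minimality of the period $d = p$ in the slender decomposition---any shorter common period would already have been produced by \cref{lem:xy*z}.
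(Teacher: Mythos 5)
Your first steps (passing to $\Max(L_U)$, applying \cref{lem:xy*z}, and extracting an integer linear relation between $U_{n+p}$, $U_n$ and nearby terms from $\val_U(x_jy_j^{m+1}z_j)-\val_U(x_jy_j^mz_j)$) are sound, and this is a genuinely different route from the paper, which never touches the maximal words here: it uses \cref{prop:reg->linear}, the $\N$-rationality of the series $\sum_{n\ge 0}U_nX^n$, and the Berstel/Salomaa--Soittola theorem to control the eigenvalues of maximal modulus, then compares leading coefficients of the resulting polynomials $Q_i$ using the monotonicity of $U$. However, your argument has a genuine gap exactly where you locate "the main obstacle," and it is not resolved. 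The matrix $M$ you assemble is a companion-type matrix for a recurrence with coefficients of mixed sign (the relation has the form $U_{n+p}=\sum_k c_kU_{n+p-k}+U_n-\sum_k c'_kU_{n-k}$ with $c_k,c'_k\ge 0$, so negative coefficients genuinely occur), hence Perron--Frobenius theory does not apply to it, and "positivity and strict monotonicity of $(U_n)$" does not rescue this: positivity of the orbit does not imply positivity or primitivity of the matrix.

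Worse, the conclusion you want from the spectral step --- a \emph{simple} positive dominant eigenvalue with no other eigenvalues of the same modulus --- is false in general for these systems. Already for $U_{n+4}=3U_{n+2}+U_n$ (item (2) of \cref{ex:associated-alternate-base}) the eigenvalues of maximal modulus are $\pm\sqrt{\beta}$, and in general the dominating eigenvalues are $\rho\xi$ for various roots of unity $\xi$, possibly with multiplicity greater than one. What must actually be proved is (i) that every eigenvalue of maximal modulus is of this form with $\rho$ itself an eigenvalue whose multiplicity dominates, and (ii) that after grouping indices modulo $p$ the polynomial coefficients $Q_i(n)$ of $\rho^{np}$ all have the same degree so that the ratios of consecutive terms converge. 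Point (i) is precisely the content of the theorem on $\N$-rational series that the paper invokes, and "exploiting the minimality of the period $d$ in the slender decomposition" is not a substitute for it: minimality of $d$ says nothing a priori about which complex numbers of modulus $\rho$ can appear as eigenvalues of your matrix $M$, nor about their multiplicities. As written, the proposal asserts the conclusion of the hard step rather than proving it.
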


\begin{proof}
We know from \cref{prop:reg->linear} that $U$ is linear.
Let $\alpha_1,\ldots,\alpha_d$ be the eigenvalues of $U$ with multiplicities $m_1,\ldots,m_d$ respectively. It is classical (see for instance~\cite[Chapter 6]{Berstel&Reutenauer:2011}) that
\[
	U_n=\sum_{j=1}^d P_j(n) \alpha_j^n
\]
for all sufficiently large $n$, where the $P_j$'s are polynomials (with coefficients in $\mathbb{C}$) of degree equal to $m_j-1$.

Since the language $L_U$ is regular, the formal series $\sum_{n\ge0}U_n X^n$ is $\N$-rational (see for instance \cite[Proposition 7.3.7]{Lothaire:2002}). The sequence $U$ is not ultimately zero in our framework, hence the series $\sum_{n\ge0}U_n X^n$ is not a polynomial and we may apply \cite[Theorem II.10.1]{Salomaa&Soittola:1978}; also see \cite{Berstel:1971,Berstel&Reutenauer:2011}.
This result tells us that the eigenvalues of $U$ of maximum modulus are of the form $\rho \xi$ where $\rho>0$ and $\xi$ is a root of unity. Moreover, $\rho$ itself is among these eigenvalues and the multiplicity of any such eigenvalue $\rho\xi$ is at most that of $\rho$.

Let $p$ be the least positive integer such that $\xi^p=1$ for all $\xi$ as in the previous paragraph. For each $i\in\{0,\ldots,p-1\}$ and for all sufficiently large $n$, we obtain
\[
	U_{np-i}
	= Q_i(n) \rho^{np} + \sum_{\substack{1\le j\le d\\ |\alpha_j|<\rho}}\alpha_j^{-i} P_j(np-i) \alpha_j^{np}
\]
where 
\[
	Q_i(n)=\sum_{\substack{1\le j\le d\\ |\alpha_j|=\rho}} \alpha_j^{-i} P_j(np-i).
\]
Since the sequence $U$ is increasing, all the polynomials $Q_i$ share the same degree, for $i\in\{0,\ldots,p-1\}$. This implies that the announced $p$ limits exist: we have
\[
     \lim_{n\to+\infty}\frac{U_{np-i}}{U_{np-i-1}}=\frac{q_i}{q_{i+1}}, \text{ for }i\in\{0,\ldots,p-2\},
     \quad \text{ and }\quad
     \lim_{n\to+\infty}\frac{U_{np-{p-1}}}{U_{np-p}}=\frac{q_{p-1}}{q_0}\rho^p.
\]
where $q_i$ is the leading coefficient of the polynomial $Q_i$, for every $i$. Finally, we get that
\[
    \lim_{n\to+\infty}\frac{U_{np-i}}{U_{np-i-p}}=\rho^p.
\]
Since the latter limit does not depend on $i$, the particular case is also proven.
\end{proof}

\begin{remark}
Without the regularity hypothesis, that is, for an arbitrary linear numeration system $U=(U_n)_{n\ge0}$, we would only get that the formal series $\sum_{n\ge0}U_n X^n$ is $\Z$-rational, which is a strictly weaker property than being $\N$-rational. In this case, we would not be able to use the result of Berstel \cite{Berstel:1971} as reported in \cite{Salomaa&Soittola:1978,Berstel&Reutenauer:2011}.
\end{remark}

\begin{remark}
In general, for an arbitrary linear recurrence sequence $U=(U_n)_{n\ge0}$ and a given $p\ge 2$, the existence of the limit $\lim_{n\to+\infty}\frac{U_{n}}{U_{n-p}}$ does not imply the existence of the $p$ limits $\lim_{n\to+\infty}\frac{U_{np-i}}{U_{np-i-1}}$, for $i\in\{0,\ldots,p-1\}$. For $p=1$, the limit $\lim_{n\to+\infty}\frac{U_n}{U_{n-1}}$ is sometimes referred to as the Kepler limit of $U$ \cite{Berend&Kumar:2022,Berend&Kumar:2025}. Call the \emph{dominating eigenvalues} of $U$ the eigenvalues of $U$ with maximal multiplicity among the eigenvalues with maximal modulus. Then the Kepler limit exists if and only if there is only one dominating eigenvalue, in which case, the limit is precisely given by the dominating eigenvalue \cite{Fiorenza&Vincenzi:2011}. This terminology is thus coherent with that of Hollander \cite{Hollander:1998}. For $p\ge 2$, this result generalizes as follows. Let $\alpha_1,\ldots,\alpha_k$ be the dominating eigenvalues of $U$. If the limit of the quotients $\frac{U_{n}}{U_{n-p}}$ exists then $\alpha_1^p=\alpha_2^p=\cdots=\alpha_k^p$ and the limit is precisely given by this value $\alpha_1^p$. The converse does not hold in general. However, the converse indeed holds if we add the condition that the sequence of moduli $(|U_n|)_{n\ge 0}$ is eventually increasing. Of course, this additional condition is fulfilled whenever $U$ is the base sequence of a linear numeration system as is our case in the present work.
\end{remark}

In view of \cref{prop:U->beta}, with a positional numeration system with a regular language can be associated a $p$-tuple of real numbers greater than or equal to $1$, which correspond to the $p$ values of the limits.
We will see that these $p$ values play a crucial role in our study. This will be done through the introduction of a new way of representing real numbers by using these $p$ values in an alternating manner. Note that non-minimal choices of $p$ only lead us to consider multiples of the minimal possible one, which amounts to repeating the associated tuple of real numbers. Even though this is not forbidden as it won't be a problem of any kind for our future developments, it does not bring any interesting feature either. So, in what follows, we will always assume that $p$ is minimal.

\begin{definition}
An \emph{alternate real base} is given by a tuple $B=(\beta_0,\ldots,\beta_{p-1})$ of real numbers greater than or equal to $1$. In this system, the \emph{B-value} of an infinite word $\mathbf{a}=a_0a_1\cdots\in\N^{\N}$ is the real number 
\[ 
    \val_B(\mathbf{a}):=\sum_{n=0}^{+\infty} \frac{a_n}{\beta_0\beta_1\cdots\beta_n}
\]
provided that the series converges, where we set $\beta_n:=\beta_{n\bmod p}$ for every $n$, extending $B$ to a periodic sequence $(\beta_n)_{n\ge 0}$.

Every such sequence $\mathbf{a}$ with $\val_B(\mathbf{a})=x$ is said to be a \emph{$B$-representation} of the real number $x$. Unless we are in the degenerate case where $p=1$ and $\beta_0=1$, we can use a greedy algorithm to find a specific $B$-representation of a real number $x\in[0,1]$, which will be called the \emph{$B$-expansion} of $x$. We set $r_0:=x$. Then, for $n\in\N$, we compute $a_n:=\lfloor \beta_n r_n\rfloor$ and $r_{n+1}:=\beta_n r_n-\lfloor \beta_n r_n\rfloor$. The sequence $a_0a_1a_2\cdots$ of greedy digits is denoted by $d_B(1)$. It is easily checked that this is indeed a $B$-representation of $x$. Whenever $B=(\beta_0)=(1)$, the greedy algorithm works only for $x\in\{0,1\}$, in which case we set $d_B(1):=10^\omega$.
\end{definition}

Among these expansions, we will be mostly interested in the expansion of $1$, not only in the base $B$ but also in all the bases obtained as circular permutations of $B$. For $B=(\beta_0,\ldots,\beta_{p-1})$ and $i\in\{0,\ldots,p-1\}$, we let 
\[
    \mathbf{d}_i:=d_{B^{(i)}}(1)
\]
where $B^{(i)}:=(\beta_i,\ldots,\beta_{i+p-1})$ denotes the $i$-th circular shift of the base $B$. Again, it will be convenient to extend this notation to $\mathbf{d}_n$ for all $n$, where the index $n$ is seen modulo $p$. In particular, we have $\mathbf{d}_{np}=\mathbf{d}_0$ for all $n$. 

We say that a representation is  \emph{finite} if it ends in a tail of zeros, and \emph{infinite} otherwise. In our study, the case where a $B^{(i)}$-expansion of $1$ is finite will require additional care, as will be apparent over time. We describe how to obtain from such a representation a new infinite one which is maximal among all infinite $B^{(i)}$-representations of $1$. The obtained infinite sequence is called the \emph{quasi-greedy $B^{(i)}$-expansion} of $1$, is denoted by $\mathbf{d}_i^*$, and is computed thanks to a recursive process as follows: 
\[
    \mathbf{d}_i^*:=
    \begin{cases}
        \mathbf{d}_i,                     & \text{if } \mathbf{d}_i \text{ is infinite }; \\
        t_{i,0}t_{i,1}\cdots t_{i,\ell_i-2}(t_{i,\ell_i-1}-1)\mathbf{d}_{i+\ell_i}^*,    & \text{if } \mathbf{d}_i=t_{i,0}t_{i,1}\cdots t_{i,\ell_i-1}0^\omega \text{ with } t_{i,\ell_i-1}> 0.
    \end{cases}
\]
If at least one $\beta_j$ is greater than $1$ then $\mathbf{d}_i^*$ is indeed a $B^{(i)}$-representation of $1$. In the degenerate case where $B=\beta_0=1$, we obtain $\mathbf{d}_0^*=0^\omega$, which is clearly not a $B$-representation of $1$. Note that, in this degenerate case, no infinite representation of $1$ exists.

\begin{definition}
    We say that a positional numeration system $U=(U_n)_{n\ge 0}$ has an \emph{associated alternate real base} $B=(\beta_0,\ldots,\beta_{p-1})$ if 
    \[
        \beta_i=\lim_{n\to+\infty}\frac{U_{np-i}}{U_{np-i-1}}
    \]
    for each $i\in\{0,\ldots,p-1\}$. By considering the minimal possible $p$ for which these limits exist, we may talk about \emph{the} alternate real base associated with $U$.
\end{definition}

Reformulating \cref{prop:U->beta}, every positional numeration system $U=(U_n)_{n\ge 0}$ with a regular numeration language has an associated alternate real base. Let us present a few examples in order to illustrate our considerations.

\begin{example} \
\label{ex:associated-alternate-base}
\begin{enumerate}[label=(\arabic*)]
    \item The base associated with a positional numeration system having a dominant root is simply given by this dominant root. In this case, $p=1$ and the notion of alternate base reduces to the Rényi real base case. For example, this is the case of the integer base numeration systems given by $U_n=b^n$ for all $n$, where $b\ge 2$ is an integer. The associated real base is the integer $b$. This is also the case of the Zeckendorf numeration system based on the Fibonacci sequence starting with $(1,2)$. This system has a dominant root, which is the golden ratio $\varphi=\frac{1+\sqrt{5}}{2}$. So the associated real base is $\varphi$.
    
    \item \label{item:ex-favori}
    Consider the linear numeration system built on the sequence $U=(U_n)_{n\ge0}$ defined by $U_{n+4}=3U_{n+2}+U_n$ for $n\ge 0$ and $(U_0,U_1,U_2,U_3)=(1,2,5,7)$. Then we have 
    \[
	   \lim_{n\to+\infty}\frac{U_{n+2}}{U_n}=\beta=\frac{3+\sqrt{13}}{2},
    \] 
    which is the unique root of maximum modulus of the polynomial $X^2-3X-1$. Further, we have 
    \begin{align*}
        \lim_{n\to+\infty}\frac{U_{2n}}{U_{2n-1}}=\beta_0=\beta-1=\frac{1+\sqrt{13}}{2} \\
        \lim_{n\to+\infty}\frac{U_{2n-1}}{U_{2n-2}}=\beta_1=\frac{\beta}{\beta-1}=\frac{5+\sqrt{13}}{6}.
    \end{align*}
    Therefore, this system has no dominant root. The alternate base associated with the numeration system $U$ is given by $B=( \frac{1+\sqrt{13}}{2},\frac{5+\sqrt{13}}{6})$. The greedy expansions of $1$ with respect to the alternate bases $B^{(0)}=B$ and $B^{(1)}=(\frac{5+\sqrt{13}}{6},\frac{1+\sqrt{13}}{2})$ are given by
    \[
        \mathbf{d}_0=2010^\omega
        \qquad\text{and}\qquad 
        \mathbf{d}_1=110^\omega.
    \]
    The quasi-greedy expansions of $1$ are then given by
    \[
        \mathbf{d}_0^*=20(01)^\omega
        \qquad\text{and}\qquad 
        \mathbf{d}_1^*=(10)^\omega.
    \]
    
    \item Let now $U=(U_n)_{n\ge0}$ be defined by $U_{n+6}=2U_{n+4}-U_n$ for $n\ge 0$ and the initial conditions $(U_0,\ldots,U_5)=(1,3,4,6,9,11)$. The alternate base associated with this system is $B=(\varphi,1)$. The corresponding greedy expansions of $1$ are 
    \[
        \mathbf{d}_0=1010^\omega
        \qquad\text{and}\qquad 
        \mathbf{d}_1=10^\omega.
    \]
    The quasi-greedy expansions of $1$ are then given by
    \[
        \mathbf{d}_0^*=(1000)^\omega
        \qquad\text{and}\qquad 
        \mathbf{d}_1^*=0(1000)^\omega.
    \]
    This illustrates that in the case where $B=(\beta_0,\ldots,\beta_{p-1})$ with $\beta_i=1$ for some $i$, the quasi-greedy $B^{(i)}$-expansion of $1$ starts with the digit $0$. Such a situation will indeed be allowed and taken into account in our future developments.
\end{enumerate}
\end{example}

\section{The quasi-greedy algorithm seen in a graph}
\label{sec:graph}

The quasi-greedy algorithm will be central in our study. We dedicate some notation to it. From the recursive definition of $\mathbf{d}_i^*$, we will be lead to combine the expansions with respect to the $p$ circular shifts of the alternate base $B$ in order to be able to compute a quasi-greedy expansion. In order to emphasize these links between the different shifted bases, we define a directed graph $G$ in the following way. The set of vertices of $G$ is $\{0,\ldots,p-1\}$. There is an edge going from $i$ to $(i+\ell_i) \bmod p$ if $\mathbf{d}_i$ is finite of length $\ell_i$. This defines a graph where each vertex has outdegree at most $1$. Whenever it exists, we let $\sigma(i)$ denote the successor of $i$, i.e., 
\[
    \sigma(i):=(i+\ell_i)\bmod p.
\]
This allows us to separate the vertices of $G$ into four categories:
\begin{enumerate}
    \item Vertices $i$ with no successor.
    \item Vertices $i$ leading to a vertex with no successor, i.e., such that there exists $r\ge 1$ such that $\sigma^r(i)$ has no successor.
    \item Vertices $i$ within a cycle, i.e., such that there exists $r\ge 1$ with $\sigma^r(i)=i$.
    \item Vertices $i$ leading to a cycle, i.e., such that there exists $r\ge 1$ such that $\sigma^r(i)$ belongs to a cycle.
\end{enumerate}
Each of these categories will be treated separately in the proof of our main result. 

The digits of the greedy expansions will be denoted by $t_{i,n}$ for $n\ge 0$, i.e., 
\[
     \mathbf{d}_i:=t_{i,0}t_{i,1}t_{i,2}\cdots 
\]
Moreover, we introduce the following notation to designate the prefix of $\mathbf{d}_i^*$ before encountering the next quasi-greedy expansion in the quasi-greedy process:
\[
    d'_i=t_{i,0}t_{i,1}\cdots t_{i,\ell_i-2}(t_{i,\ell_i-1}-1)
\]
whenever $\mathbf{d}_i=t_{i,0}t_{i,1}\cdots t_{i,\ell_i-1}0^\omega$ with $t_{i,\ell_i-1}>1$. In this case, we will always keep the notation  $\ell_i$ to refer to the \emph{length} of $\mathbf{d}_i$, i.e., the length of its prefix up to the last non-zero digit, which we denote by
\[
    d_i=t_{i,0}t_{i,1}\cdots t_{i,\ell_i-1}.
\]
Thus, in this situation, we have 
\[
    \mathbf{d}^*_i=d'_i\mathbf{d}^*_{i+\ell_i}.
\]
Note that we use bold letters to designate infinite words 
while we use a normal font for finite words. In particular, to avoid any confusion, we always write tails of zeros explicitly in infinite words. 

In order to iterate the quasi-greedy process, we define 
\[
    k_{i,j}:=\ell_i+\ell_{\sigma(i)}+\ell_{\sigma^2(i)}+\ldots+\ell_{\sigma^{j-1}(i)}
\]
and
\begin{equation}
\label{eq:w_ij}
    \mathbf{w}_{i,j}:= d'_id'_{i+k_{i,1}}\cdots d'_{i+k_{i,j-1}}\mathbf{d}_{i+k_{i,j}}
\end{equation}  
for $j\ge 0$, provided that all the involved lengths are finite. In particular, note that 
\begin{itemize}
    \item $k_{i,0}=0$ and $k_{i,1}=\ell_i$
    \item $\mathbf{d}_i=\mathbf{w}_{i,0}$ 
    \item if $i$ leads to a vertex with no successor or itself has no successor in $G$, then $\mathbf{d}_i^*=\mathbf{w}_{i,s}$ where $s\ge 0$ is such that $\sigma^s(i)$ has no successor in $G$.
    \item if $i$ leads to a cycle or itself belongs to a cycle in $G$, then $\mathbf{w}_{i,j}$ is defined for all $j\ge 0$ and $\mathbf{d}_i^*=\lim_{j\to\infty}\mathbf{w}_{i,j}=d'_id'_{i+k_{i,1}}\cdots $.   
\end{itemize}
All the other words $\mathbf{w}_{i,j}$ are called \emph{intermediate $B^{(i)}$-expansions} of $1$ as they evaluate to $1$ in base $B^{(i)}$ and they are lexicographically in between the quasi-greedy and the greedy $B^{(i)}$-expansions of $1$.  

Since indices of the words $d'_i$ and $\mathbf{d}_i$ can be taken modulo $p$, we can rewrite \eqref{eq:w_ij} as 
\begin{equation}
\label{eq:w_ij-sigma}
    \mathbf{w}_{i,j}:= d'_id'_{\sigma(i)}\cdots d'_{\sigma^{j-1}(i)}\mathbf{d}_{\sigma^j(i)}.
\end{equation}
Depending on the situation, we will allow ourselves to select the most convenient notation between \eqref{eq:w_ij} or \eqref{eq:w_ij-sigma}. For our further developments, we also define the values $m_{i,j}$ by
\begin{equation}
\label{eq:m_ij}
    \sigma^j(i)=i+k_{i,j}-m_{i,j}p.
\end{equation}

The following example illustrates the four categories of vertices described above.

\begin{example}
\label{ex:running}
Let $U=(U_n)_{n\ge 0}$ be defined by $U_{n+10}=16U_{n+5}-9U_n$ for $n\ge 0$ and the following initial conditions:
\[
\begin{array}{c|ccccc ccccc}
     n & 0 & 1 & 2 & 3 & 4 & 5 & 6 & 7 & 8 & 9 \\
     \hline
     U_n & 1 & 2 & 3 & 6 & 10 & 19 & 29 & 48 & 96 & 151
\end{array}
\]
For $i\in\{0,\ldots,4\}$, the limits
\[
	\beta_i=\lim_{n\to+\infty}\frac{U_{5n-i}}{U_{5n-i-1}}
\]
exist and can be effectively computed:
\[
    \begin{array}{c|c}
    i & \beta_i\\
    \hline
        0 & \frac{11+3\sqrt{55}}{17} \\
        1 & \frac{2+\sqrt{55}}{6}  \\
        2 & 2 \\
        3 & \frac{6+3\sqrt{55}}{17}\\
        4 & \frac{11+3\sqrt{55}}{22} \\
    \end{array}
\]
The product of these bases is $8+\sqrt{55}$, which is a root of $X^2-16X^2+9$ as expected.
Set $B=(\beta_0,\ldots,\beta_4)$. We get the following greedy and quasi-greedy $B^{(i)}$-expansions of $1$:
\begin{center}
\begin{minipage}{3.5cm}
\[
\begin{array}{c|l}
   i &  \mathbf{d}_i\\
   \hline
   0 & 1110^\omega\\
   1 & 11(00010)^\omega\\
   2 & 20^\omega\\
   3 & 110^\omega\\
   4 & 110^\omega
\end{array}
\]
\end{minipage}\qquad\qquad
\begin{minipage}{3.5cm}
\[
\begin{array}{c|l}
   i &  \mathbf{d}_i^*\\
   \hline
   0 & (11010)^\omega\\
   1 & 11(00010)^\omega\\
   2 & 1(10110)^\omega\\
   3 & (10110)^\omega\\
   4 & 1011(00010)^\omega
\end{array}
\]
\end{minipage}
\end{center}
For example, the intermediate $B$-expansions and $B^{(2)}$-expansions of $1$ are given by 
\begin{center}
\begin{minipage}{4.5cm}
\begin{align*}
	\mathbf{w}_{0,1}&=110\cdot 110^\omega\\
	\mathbf{w}_{0,2}&=110\cdot 10 \cdot 1110^\omega\\
	\mathbf{w}_{0,3}&=110\cdot 10 \cdot 110\cdot 110^\omega\\
	&\;\ \vdots
\end{align*}
\end{minipage}\qquad\qquad
\begin{minipage}{4.5cm}
\begin{align*}
	\mathbf{w}_{2,1}&=1\cdot 110^\omega\\
	\mathbf{w}_{2,2}&=1\cdot 10 \cdot 1110^\omega\\
	\mathbf{w}_{2,3}&=1\cdot 10\cdot 110 \cdot 110^\omega\\
	&\;\ \vdots
\end{align*}
\end{minipage}
\end{center}
There are no intermediate $B^{(1)}$-expansions of 1 since $\mathbf{w}_{1,0}=\mathbf{d}_1$ is infinite. The associated graph $G$ is depicted in \cref{fig:graphG}. The four categories outlined above correspond to $\{1\},\{4\},\{0,3\}$ and $\{2\}$, respectively.
\begin{figure}[htb]
\centering
\begin{tikzpicture}
\tikzstyle{every node}=[shape=circle, fill=none, draw=black,minimum size=15pt, inner sep=2pt]
\node(0) at (0,0) {$0$};
\node(1) at (2,0) {$1$} ;
\node(2) at (4,0) {$2$} ;
\node(3) at (6,0) {$3$} ;
\node(4) at (8,0) {$4$} ;
\tikzstyle{every path}=[color=black, line width=0.5 pt]
\tikzstyle{every node}=[shape=circle]
\draw [-Latex] (0) to [bend left=30] node [] {} (3) ;
\draw [-Latex] (3) to [bend left=30] node [] {} (0) ;
\draw [-Latex] (2) to [] node [] {} (3) ;
\draw [-Latex] (4) to [bend left=20] node [] {} (1) ;
\end{tikzpicture}
\caption{The graph $G$ associated with the numeration system $U$ of \cref{ex:running}.}
\label{fig:graphG}
\end{figure}
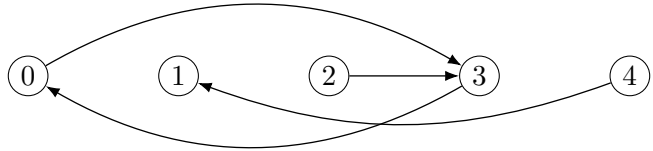
\end{example}

\section{Fourth step: Restriction to Parry alternate bases}
\label{sec:Parry}

The next result is one of the key arguments of our proof. It connects the lexicographically maximal words of each length in the numeration language to the representations of $1$ in the associated alternate base.

We introduce the following notation, which will allow us to include alternate bases $(\beta_0,\ldots,\beta_{p-1})$ with some $\beta_i$ equal to $1$. As previously discussed, such alternate bases can be linked to certain linear numeration systems, and therefore must be taken into account in our study.

\begin{definition}
\label{def:rep-ic}
Let $U$ be a positional numeration system with an associated alternate base $(\beta_0,\ldots,\beta_{p-1})$. Then for integers $i,c,n$ such that $0\le i<p$, $n\ge 1$ and $1\le c\le U_{np-i}$, we define
\[
    \rep_{i,c}(n):=0^\ell\rep_U(U_{np-i}-c),
\]
where $\ell=np-i-|\rep_U(U_{np-i}-c)|$. 
\end{definition}

The idea is that we pad the greedy representation of $U_{np-i}-c$ with leading zeros in order to obtain a representation of length $np-i$. Most of the time, this consists in doing nothing since if $\beta_i>1$, then $\rep_{i,c}(n)=\rep_U(U_{np-i}-c)$ for all large enough $n$. Moreover, we have $\rep_{i,1}(n)=\rep_U(U_{np-i}-1)$ for all $n\ge 1$. However, in the case where $\beta_i=1$ and $c\ge 2$, these few leading zeros will ensure that we keep working with words of the desired length $np-i$.  

\begin{proposition}
\label{prop:Pref-max=Pref-di}    
Let $U$ be a positional numeration system with an associated alternate base $(\beta_0,\ldots,\beta_{p-1})$, let $i\in\{0,\ldots,p-1\}$ and let $c$ be a positive integer. For all $L\ge 0$, there exists $N$ such that for all $n\ge N$, there exists $j\in\{0,\ldots,L\}$ such that $\rep_{i,c}(n)$ and $\mathbf{w}_{i,j}$ share a common prefix of length $L$.
\end{proposition}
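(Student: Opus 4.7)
I would prove this by induction on $L$, with the base case $L=0$ being trivial (the empty word is a common prefix for $j=0$). The main idea is that the greedy $U$-algorithm applied to $U_{np-i}-c$ asymptotically mirrors the alternate-base greedy dynamics on $1$ in base $B^{(i)}$, with the quasi-greedy ``switches'' in the definition of $\mathbf{w}_{i,j}$ corresponding to occasional increments of the index $j$.

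For the inductive step, assume the statement for $L$ and fix, for each sufficiently large $n$, an index $j\le L$ such that the first $L$ greedy digits of $\rep_{i,c}(n)$ agree with those of $\mathbf{w}_{i,j}$. Let $r$ be the corresponding greedy remainder; then the $(L+1)$-st digit equals $\lfloor r/U_{np-i-L-1}\rfloor$. Writing $r$ explicitly as $U_{np-i}-c-\sum_{k=0}^{L-1}a_{np-i-1-k}U_{np-i-1-k}$ and applying \cref{prop:U->beta} to the ratios $U_{np-i-k}/U_{np-i-k-1}\to\beta_{i+k}$ (combined with $c/U_{np-i-L-1}\to 0$), one shows that $r/U_{np-i-L-1}$ converges to a limit $T_L$. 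By identification, $T_L$ equals $\beta_{i+L}$ times the state of the $B^{(i)}$-greedy dynamical system on $1$ after reading the first $L$ digits of $\mathbf{w}_{i,j}$.

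If $T_L\notin\Z$, then for $n$ large enough the greedy digit is $\lfloor T_L\rfloor$, which coincides with the $(L+1)$-st digit of $\mathbf{w}_{i,j}$, and the prefix agreement extends with $j'=j$. If $T_L\in\Z$, then a switch can only occur if position $L$ is the last non-zero position of the block $\mathbf{d}_{\sigma^j(i)}$, that is, $L+1=k_{i,j+1}$. In that case, depending on the sign of $r/U_{np-i-L-1}-T_L$ for large $n$, the greedy digit equals either $T_L$ (matching $\mathbf{w}_{i,j}$, so $j'=j$) or $T_L-1$ (matching $\mathbf{w}_{i,j+1}$, so $j'=j+1$). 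In either case $j'\in\{0,\ldots,L+1\}$, closing the induction.

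The main technical obstacle is the integer case $T_L\in\Z$, where one must control the sign of a perturbation with two independent sources. The intrinsic error $U_{np-i-L}/U_{np-i-L-1}-\beta_{i+L}$ decays at rate $(|\alpha|/\rho)^n$, where $\alpha$ is the largest subdominant eigenvalue and $\rho$ the dominant modulus, while the fixed correction $c/U_{np-i-L-1}$ decays like $\rho^{-n}$ and is always negative. One checks that whichever term dominates, the combined sign of the approach to $T_L$ stabilizes for $n$ large, yielding a uniform threshold $N=N(L+1,c)$ and completing the induction.
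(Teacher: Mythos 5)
Your induction and convergence argument are essentially the paper's proof: the paper likewise defines, for each large $n$, an index $j(L,n)$ with matching prefixes, shows that the normalized greedy remainder converges to $\beta_{i+L}$ times the alternate-base greedy state (the quantity \eqref{eq:star2}), and splits into a non-integer case (the digit stabilizes to $a_{j,L}$) and an integer case at the end of a block (the digit is $a_{j,L}$ or $a_{j,L}-1$, i.e.\ $j'=j$ or $j'=j+1$). Two details need to be made explicit, though. First, you must take $j$ \emph{minimal} (equivalently, ensure $k_{i,j}\le L$): with an arbitrary $j$ supplied by the induction hypothesis one can have $L=k_{i,j_0+1}-1$ for some $j_0<j$ while $L+1\ne k_{i,j+1}$, so $T_L$ is an integer even though your block-position test declares that no switch can occur, and the new digit may then match $\mathbf{w}_{i,j_0}$ rather than $\mathbf{w}_{i,j}$ or $\mathbf{w}_{i,j+1}$, breaking your dichotomy $j'\in\{j,j+1\}$. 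Minimality makes the three cases $L\ge k_{i,j+1}$, $L=k_{i,j+1}-1$ and $k_{i,j}\le L<k_{i,j+1}-1$ exhaustive. Second, in the integer case with $T_L=0$ (i.e.\ $L\ge k_{i,j+1}$) you need the nonnegativity of the greedy remainder to rule out the digit $-1$; the paper invokes this explicitly, and your ``switch only at the last non-zero position'' assertion silently relies on it.

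Your final paragraph, however, is both unnecessary and false. The proposition does not require the digit at the critical position to stabilize, only that it be one of the two admissible values, and that already closes the induction. The sign of the approach to $T_L$ need \emph{not} stabilize: the statement is deliberately phrased with $j$ depending on $n$, and the paper's examples (e.g.\ $U_n=U_{n-1}+U_{n-3}+U_{n-4}+1$, whose maximal words alternate between prefixes of $\mathbf{w}_{0,0}$ and $\mathbf{w}_{0,1}$ according to $n\bmod 4$) show that the oscillation can persist forever, so ``one checks that the combined sign stabilizes'' is not something you could actually check. Moreover, the decay-rate reasoning presupposes that $U$ is a linear recurrence with a spectral gap, which is not a hypothesis of this proposition --- only the existence of the limits $\beta_i$ is assumed. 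Delete that paragraph: the threshold $N$ comes simply from requiring $|b_{n,j}-T_L|<1$ for the finitely many relevant $j$, together with the two refinements above.
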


\begin{proof}
For all $L,n\ge 0$, if there exists some $j\in\{0,\ldots,L\}$ such that $\rep_{i,c}(n)$ and $\mathbf{w}_{i,j}$ share a common prefix of length $L$, we define $j(L,n)$ to be the minimal such $j$. We show that $j(L,n)$ is well defined for all $L\ge 0$ and all large enough $n$ by induction on $L$. This holds for $L=0$ as $j(0,n)=0$ for all $n\ge 0$. Now, consider a fixed $L\ge 0$ and assume that there exists $N$ such that for all $n\ge N$, the value $j(L,n)$ is well defined. We have to prove the existence of $j(L+1,n)$ for all $n$ large enough.

For each $j$, we write 
\[
    \mathbf{w}_{i,j}=a_{j,0}a_{j,1}\cdots
\]
where we have dropped the dependence on $i$ for the digits since $i$ is fixed in this proof. 

Consider $n\ge N$. From the greedy algorithm and the induction hypothesis, we know that the digit indexed by $L$ in $\rep_{i,c}(n)$ is equal to $\lfloor b_{n,j(L,n)}\rfloor$ where
\[
    b_{n,j}:=\frac{U_{np-i}-c - a_{j,0} U_{np-i-1} - \cdots - a_{j,L-1} U_{np-i-L}}{U_{np-i-L-1}}.
\]
By hypothesis, for any fixed $k$, the quotient $\frac{U_{np-i-k}}{U_{np-i-L-1}}$ tends to the product $\beta_{i+k}\cdots\beta_{i+L}$ as $n$ tends to infinity. Therefore we obtain that, for a fixed $j$, the quantity $b_{n,j}$ converges to
 \begin{equation}
 \label{eq:star2}
    \beta_{i}\cdots\beta_{i+L} \left( 1-\frac{a_{j,0}}{\beta_i} -\cdots - \frac{a_{j,L-1}}{\beta_i\cdots\beta_{i+L-1}}\right)
\end{equation}
as $n$ tends to infinity.

Note that by minimality of $j(L,n)$, we have $k_{i,j(L,n)}\le L$. Indeed, we have $k_{i,0}=0$ and for $j\ge 1$, the words $\mathbf{w}_{i,j}$ and $\mathbf{w}_{i,j-1}$ share the same prefix of length $k_{i,j}-1$. Now, we consider some fixed $j\in\{0,\ldots,L\}$. For all $n\ge N$ such that $j(L,n)=j$, we study three cases that cover all possible situations. 
\begin{itemize}
    \item If $L\geq k_{i,j+1}$, all nonzero digits of $\mathbf{w}_{i,j}$ are contained in the prefix of length $L$. This implies that the quantity \eqref{eq:star2} is equal to $0$. Since $b_{n,j}$ is nonnegative and goes to $0$ as $n$ goes to infinity, its floor must be $0$ if $n$ is large enough, hence it coincides with $a_{j,L}$. 
    \item If $L=k_{i,j+1}-1$, then \eqref{eq:star2} is equal to $a_{j,L}$, which is positive. In this case, for large enough $n$, the floor of $b_{n,j}$ is either $a_{j,L}$ or $a_{j,L}-1$, which are the digits in position $L$ in $\mathbf{w}_{i,j}$ and $\mathbf{w}_{i,j+1}$ respectively. 
    \item Otherwise we have $k_{i,j}\le L<k_{i,j+1}-1$ and the quantity \eqref{eq:star2} belongs to the interval $(a_{j,L},a_{j,L}+1)$, which implies that $\lfloor b_{n,j}\rfloor=a_{j,L}$ for large enough $n$.
\end{itemize}
Since $j(L,n)\le L$, we get that there exists $N'\ge N$ such that for all $n\ge N'$, the value $j(L+1,n)$ is well defined and is equal to either $j(L,n)$ or $j(L,n)+1$.
\end{proof}

Let us consider a few examples in order to illustrate the previous proposition.

\begin{example} \
\label{ex:liste-exemples}
    \begin{enumerate}[label=(\arabic*)]\setlength\itemsep{0.7em}
        \item Let $U=(U_n)_{n\ge 0}=(1,2,3,5,8,\ldots)$ be the Zeckendorf numeration system already seen in \cref{ex:associated-alternate-base}. Since $p=1$, only $i=0$ has to be considered. We obtain $\mathbf{w}_{0,j}=(10)^j110^\omega$ for all $j\ge 0$ and $\rep_U(U_n-1)=\Pref_n((10)^\omega)=\Pref_n(\mathbf{w}_{0,n})$ for all $n\ge 0$, in accordance with \cref{prop:Pref-max=Pref-di}. 
        
        \item Let $U=(U_n)_{n\ge 0}=(1,2,3,5,9, 15, 24, 39, \ldots)$ be defined by $U_n=U_{n-1}+U_{n-3}+U_{n-4}+1$ for $n\ge 4$ and the initial conditions $(1,2,3,5)$. As in the previous example, this system has the dominant root $\varphi=\frac{1+\sqrt{5}}{2}$. However, in this case, the maximal words $\rep_U(U_n-1)$ are given by $\Pref_n(110^\omega)=\Pref_n(\mathbf{w}_{0,0})$ for $n\equiv 0,1\pmod 4$ and by $\Pref_n(10110^\omega)=\Pref_n(\mathbf{w}_{0,1})$ for $n\equiv 2,3\pmod 4$, which is also a behavior predicted by \cref{prop:Pref-max=Pref-di}. In particular, we see that the sequence of maximal words does not admit a limit.

        \item Let $U=(U_n)_{n\ge 0}=(1, 4, 19, 82, 325, \ldots)$ defined by $U_n = n  3^n + 1$. In \cref{tab:slow-convergence}, we have computed the first maximal words in $L_U$.
        \begin{table}
        \centering
        \begin{tabular}{l|l||l|l}
        $U_n-1$ & $\rep_U(U_n-1)$ & $U_n-1$ & $\rep_U(U_n-1)$ \\
        \hline
        0 & $\varepsilon$ & 15309 &  3123333 \\
        3 & 3 & 52488 & 31123332 \\
        18 & 42 & 177147 & 310320333 \\
        81 & 411 & 590490 & 3101123331 \\
        324 & 3402& 1948617 & 30310320330 \\
        1215 & 32400 & 6377292 & 302310320322 \\
        4374 & 320400 & 20726199 & 3022101123321
        \end{tabular}
        \bigskip
        \caption{First maximal words in $L_U$ for $U=(n  3^n + 1)_{n\ge 0}$.}
        \label{tab:slow-convergence}
        \end{table}
        This system is linear and has the dominant root $3$. It satisfies $\lim_{n\to \infty} \rep_U(U_n-1)=\mathbf{w}_{0,0}=30^\omega$ although the convergence speed is extremely slow. For example, we see a second $0$ only from $n= 30$ and a third one only from $n= 85$.
        
        \item Consider the positional numeration system \ref{item:ex-favori} from \cref{ex:associated-alternate-base}. Contrarily to the previous three examples where we had the dominant root condition, we now have $p=2$. We have $\mathbf{w}_{0,j}=20(01)^j10^\omega$ and $\mathbf{w}_{1,j}=(10)^j110^\omega$ for $j\ge 0$. The maximal words are given by $\rep_U(U_{2n}-1)=20(01)^{n-1}=\Pref_{2n}(\mathbf{w}_{0,n-1})$ and $\rep_U(U_{2n-1}-1)=(10)^{n-1}1=\Pref_{2n-1}(\mathbf{w}_{1,n-1})$ for $n\ge 1$.
        
        \item \label{item:liste-exemples-5} In the previous example, the minimal polynomial of $U$ was of the form $f(X^p)$ for some polynomial $f$. Now, consider $U=(U_n)_{n\ge0}=(1,3,8,12, 16, 48, \ldots)$ such that $U_{n+3}=2U_{n+2}-4U_{n+1}+8U_n$. This system has no dominant root but we have
        \begin{align*}
        \beta_0&=\lim_{n\to+\infty}\frac{U_{4n}}{U_{4n-1}}=\frac{4}{3},
        \qquad
        \beta_1=\lim_{n\to+\infty}\frac{U_{4n-1}}{U_{4n-2}}=\frac{3}{2}, \\
        \beta_2&=\lim_{n\to+\infty}\frac{U_{4n-2}}{U_{4n-3}}=\frac{8}{3},
        \qquad
        \beta_3=\lim_{n\to+\infty}\frac{U_{4n-3}}{U_{4n-4}}=3.
        \end{align*}
        Hence we get $p=4$ and we can compute
        \begin{align*}
        \mathbf{w}_{0,j} &=(1010)^j10110^\omega \quad \text{for }j\ge 0 \\
        \mathbf{w}_{1,0}&=1110^\omega \quad \text{and}\quad
        \mathbf{w}_{1,j} =110(1010)^{j-1}10110^\omega \quad \text{for }j\ge 1 \\
        \mathbf{w}_{2,0}&=220^\omega \quad \text{and}\quad
        \mathbf{w}_{2,j} =21(1010)^{j-1}10110^\omega \quad \text{for }j\ge 1 \\
        \mathbf{w}_{3,0}&=30^\omega \quad \text{and}\quad
        \mathbf{w}_{3,j} =2(1010)^{j-1}10110^\omega \quad \text{for }j\ge 1.
        \end{align*}
        The maximal words are given by
        \begin{align*}
        \rep_U(U_{4n}-1) &= (1010)^n = \Pref_{4n}(\mathbf{w}_{0,n}) \\
        \rep_U(U_{4n-1}-1) &= 110(1010)^{n-1} = \Pref_{4n-1}(\mathbf{w}_{1,n}) \\
        \rep_U(U_{4n-2}-1) &= 21(1010)^{n-1} = \Pref_{4n-2}(\mathbf{w}_{2,n}) \\
        \rep_U(U_{4n-3}-1) &= 2(1010)^{n-1} = \Pref_{4n-3}(\mathbf{w}_{3,n})
        \end{align*}
        for all $n\ge 1$.
        
        \item Consider a system $U=(1,3,6,11,15,\ldots)$ given by the recurrence relation $U_{n+4}=2U_{n+2}+3U_{n}$. Equivalently, $U$ is given by the relations $U_{2n}=U_{2n-1}+U_{2n-2}-2(-1)^n$ and $U_{2n+1}=2U_{2n}+(-1)^n$. In this case we have $p=2$, with $\beta_0=3/2$ and $\beta_1=2$. The maximal words are given by
        \begin{align*}
        \rep_U(U_{4n}-1) &= (10)^{2n} = \Pref_{4n}(\mathbf{w}_{0,2n}) \\
        \rep_U(U_{4n-1}-1) &= 1110^{4n-4} = \Pref_{4n-1}(\mathbf{w}_{1,1}) \\
        \rep_U(U_{4n-2}-1) &= 110^{4n-5}1 = \Pref_{4n-3}(\mathbf{w}_{0,0})1 \\
        \rep_U(U_{4n-3}-1) &= 20^{4n-4} = \Pref_{4n-3}(\mathbf{w}_{1,0})
        \end{align*}
        for all $n\ge 2$. We see that the sequence of maximal words does not admit a limit, even when selecting words of a given parity.

        \item Finally, let us illustrate the case where $\rep_{i,c}(n)$ differs from $\rep_U(U_{np-i}-c)$. This situation happens whenever $c\ge 2$ and $\beta_i=1$. Consider for instance the system $U=(U_n)_{n\ge 0}$ defined by $U_0=1$, $U_{2n-1}=2U_{2n-2}$ and $U_{2n}=U_{2n-1}+1$ for $n\ge 1$. The associated alternate base is $(1,2)$. It is easy to see that $\rep_U(U_{2n}-2)=110^{2n-3}$ for all $n\ge 2$ and that $\rep_U(U_{2n-1}-2)=10110^{2n-5}$ for all $n\ge 3$. Observe that both $\rep_U(U_{2n}-2)$ and $\rep_U(U_{2n-1}-2)$ have length $2n-1$. Hence for all $n\ge 3$, the words from \cref{def:rep-ic} are given by $\rep_{0,2}(n)=0110^{2n-3}$ and $\rep_{1,2}(n)=10110^{2n-5}$. For the alternate base $B=(1,2)$, we obtain that 
        \begin{align*}
        & \mathbf{w}_{0,2j}=(01)^j10^\omega, \quad
            \mathbf{w}_{0,2j+1}=(01)^j020^\omega, \\
        & \mathbf{w}_{1,2j}=(10)^j20^\omega, \quad
            \mathbf{w}_{1,2j+1}=(10)^j110^\omega    
        \end{align*}
        for all $j\ge 0$. We thus see that 
        \[
           \rep_{0,2}(n) =\Pref_{2n}(\mathbf{w}_{0,2}) 
           \quad\text{ and }\quad
           \rep_{1,2}(n) =\Pref_{2n}(\mathbf{w}_{1,3})
        \]
        for all $n\ge 3$, in accordance with \cref{prop:Pref-max=Pref-di}. Note that in contrast, $\rep_U(U_{2n}-2)$ starts with $11$ whereas none of the words $\mathbf{w}_{0,j}$ do, justifying the switch to $\rep_{i,c}$ in \cref{prop:Pref-max=Pref-di}.
    \end{enumerate}
\end{example}

Let us extract some useful information from \cref{prop:Pref-max=Pref-di} in the case where $\mathbf{d}_i$ is infinite. In terms of the graph $G$, this means that $i$ is a vertex with no outgoing edge.

\begin{corollary}
\label{cor:Pref-max=Pref-di}
Let $U$ be a positional numeration system with an associated alternate base $(\beta_0,\ldots,\beta_{p-1})$, let $i\in\{0,\ldots,p-1\}$ be such that $\mathbf{d}_i$ is infinite, and let $c$ be a positive integer. We have 
\[
    \lim_{n\to\infty}\rep_{i,c}(n)=\mathbf{d}_i
\]
where the limit is taken with respect to the product topology.
\end{corollary}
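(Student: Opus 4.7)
The plan is to derive this as an essentially immediate consequence of \cref{prop:Pref-max=Pref-di}, the only genuine observation being that the hypothesis ``$\mathbf{d}_i$ is infinite'' severely restricts which of the intermediate expansions $\mathbf{w}_{i,j}$ can possibly appear.

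First I would unpack what the hypothesis means in the graph $G$ introduced in \cref{sec:graph}: if $\mathbf{d}_i$ is infinite, then the length $\ell_i$ is not defined, i.e., the vertex $i$ has no outgoing edge, so $\sigma(i)$ does not exist. Consequently, the quantities $k_{i,j}$ for $j\geq 1$ are not defined, and looking at the definition \eqref{eq:w_ij} we see that the only word $\mathbf{w}_{i,j}$ that is well defined is $\mathbf{w}_{i,0}=\mathbf{d}_i$ itself.

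Next, I would apply \cref{prop:Pref-max=Pref-di} directly. Fix $L\geq 0$; the proposition furnishes some $N$ such that for every $n\geq N$ there exists $j\in\{0,\ldots,L\}$ for which $\rep_{i,c}(n)$ and $\mathbf{w}_{i,j}$ share a common prefix of length $L$. Since among these $j$'s only $j=0$ corresponds to a defined word, the proposition forces $j=0$ and thus $\rep_{i,c}(n)$ and $\mathbf{d}_i$ share a common prefix of length $L$ for every $n\geq N$.

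Finally, I would translate this into convergence: the length of $\rep_{i,c}(n)$ equals $np-i$ and hence tends to infinity, so the assertion that for every $L$ the prefixes of length $L$ of $\rep_{i,c}(n)$ and $\mathbf{d}_i$ coincide from some point on is exactly convergence in the product topology on $\N^{\N}$. I do not anticipate any obstacle here: the corollary is really a matter of noting that in the ``no successor'' case of $G$, the list of intermediate expansions collapses to the single element $\mathbf{d}_i$, after which \cref{prop:Pref-max=Pref-di} does all the work.
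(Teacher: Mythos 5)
Your proposal is correct and matches the paper's own (one-line) proof: both arguments observe that when $\mathbf{d}_i$ is infinite the only well-defined intermediate expansion is $\mathbf{w}_{i,0}=\mathbf{d}_i$, so $j=0$ is forced in \cref{prop:Pref-max=Pref-di}, and convergence in the product topology follows. You simply spell out the details (no outgoing edge, undefined $k_{i,j}$, lengths tending to infinity) that the paper leaves implicit.
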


\begin{proof}
    Since $\mathbf{d}_i=\mathbf{d}^*_i$, only $j=0$ is possible in \cref{prop:Pref-max=Pref-di}.
\end{proof}

In \cref{sec:alternate-bases}, we have seen that any positional numeration system $U$ with a regular numeration language has an associated alternate real base $B=(\beta_0,\ldots,\beta_{p-1})$. The next result provides the additional information that this associated alternate base $B$ must have the property that $\mathbf{d}_i^*$ is ultimately periodic for every $i\in\{0,\ldots,p-1\}$. Alternate bases with this property are said to be \emph{Parry}, see~\cite{Charlier&Cisternino&Masakova&Pelantova:2023}. Equivalently, one can ask that $\mathbf{d}_i$ is finite or ultimately periodic for every $i\in\{0,\ldots,p-1\}$, see~\cite[Proposition 40]{Charlier&Cisternino:2021}.

\begin{proposition}
\label{prop:regular->Parry}
Let $U$ be a positional numeration system with a regular numeration language. Then its associated alternate base is Parry.
\end{proposition}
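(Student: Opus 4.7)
The plan is first to reduce the result to the case where $\mathbf{d}_i$ is infinite (equivalently, where $i$ has no successor in the graph $G$), and then to prove that case by extracting ultimate periodicity from the slender-language decomposition of the lexicographically maximal words of $L_U$.

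For the reduction, I would walk through the four vertex categories of \cref{sec:graph}. If $i$ belongs to a cycle of $G$ of length $r$, then $\mathbf{d}_i^* = (d'_i d'_{\sigma(i)} \cdots d'_{\sigma^{r-1}(i)})^{\omega}$ is purely periodic. If $i$ leads to a cycle via some $\sigma^s(i)$, then $\mathbf{d}_i^* = d'_i \cdots d'_{\sigma^{s-1}(i)} \, \mathbf{d}_{\sigma^s(i)}^*$ is ultimately periodic by the previous case. If $i$ leads to (or itself is) a vertex $\sigma^s(i)$ with no successor, then $\mathbf{d}_i^* = d'_i \cdots d'_{\sigma^{s-1}(i)} \, \mathbf{d}_{\sigma^s(i)}$ is ultimately periodic as soon as the infinite word $\mathbf{d}_{\sigma^s(i)}$ is. So it remains to prove that $\mathbf{d}_j$ is ultimately periodic for every vertex $j$ with no successor in $G$.

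To handle this key case, the idea is to combine the structural consequences of regularity with the convergence statement of \cref{cor:Pref-max=Pref-di}. By \cref{prop:RegIffMaxReg}, $\mathrm{Max}(L_U)$ is regular; by \cref{lem:Max-Words} and \cref{lem:SplitModp}, the sublanguage $M_j := \{\rep_U(U_{np-j}-1) : n \ge 1\}$ is a regular slender language (since $U$ is strictly increasing, $\rep_U(U_{np-j}-1)$ has length exactly $np-j$, so $M_j$ contains one word per such length). \cref{lem:xy*z} then decomposes $M_j$ as a finite union $F \cup \bigcup_{\ell} x_\ell y_\ell^* z_\ell$, with $F$ finite. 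Since $M_j$ is infinite, at least one index $\ell$ accounts for infinitely many words in $M_j$; for the corresponding values of $n$ we have $\rep_U(U_{np-j}-1) = x_\ell y_\ell^{k_n} z_\ell$ with $k_n \to \infty$. On the other hand, since $\mathbf{d}_j$ is infinite, \cref{cor:Pref-max=Pref-di} yields $\rep_U(U_{np-j}-1) \to \mathbf{d}_j$ in the product topology. A direct comparison of length-$L$ prefixes (for arbitrary $L$) then forces $\mathbf{d}_j = x_\ell y_\ell^{\omega}$, hence ultimate periodicity.

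The main point to be careful about is the reduction step: in each of the three non-base cases one has to verify that $\mathbf{d}_i^*$ really is built from only finitely many constant finite pieces $d'_k$ together with, at worst, one ultimately periodic infinite suffix $\mathbf{d}_{\sigma^s(i)}$. The central convergence argument is then a routine assembly of \cref{cor:Pref-max=Pref-di} with Shallit's description of slender regular languages.
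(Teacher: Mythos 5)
Your proof is correct and follows essentially the same route as the paper: the core argument is identical, namely combining the convergence of the maximal words $\rep_U(U_{np-j}-1)$ to $\mathbf{d}_j$ (\cref{cor:Pref-max=Pref-di}) with the $xy^*z$ decomposition of slender regular languages (\cref{prop:RegIffMaxReg,lem:xy*z,lem:SplitModp}) to force $\mathbf{d}_j=x_\ell y_\ell^\omega$. The only difference is that you verify the reduction to the case of infinite $\mathbf{d}_i$ by hand via the four vertex categories of the graph $G$, whereas the paper simply invokes the known equivalence that an alternate base is Parry if and only if each $\mathbf{d}_i$ is finite or ultimately periodic (\cite[Proposition 40]{Charlier&Cisternino:2021}).
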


\begin{proof}
Let $p$ be the length of the alternate base and consider $i\in\{0,\ldots,p-1\}$ such that $\mathbf{d}_i$ is infinite. We have to show that $\mathbf{d}_i$ is ultimately periodic. By \cref{cor:Pref-max=Pref-di}, the sequence of finite words $(\rep_U(U_{np-i}-1))_{n\ge 1}$ converges to the infinite word $\mathbf{d}_i$. \cref{prop:RegIffMaxReg,lem:xy*z,lem:SplitModp} then imply that $\mathbf{d}_i$ is ultimately periodic.
\end{proof}

\section{Our strategy}\label{sec:strategy}

We consider an arbitrary positional numeration system $U$ and we aim at determining whether the numeration language $L_U$ is regular or not. At this point, we have obtained several necessary conditions for $L_U $ to be regular. We thus put ourselves in the restricted situation where these conditions are satisfied. Namely, we suppose that $U$ is linear and that there is a Parry alternate base $B=(\beta_0,\ldots,\beta_{p-1})$ associated with $U$. In view of the results of \cref{sec:max-regular}, in order to understand when $L_U$ is indeed regular, it suffices to study the regularity of the $p$ languages 
\[
    L_i:=\{\rep_U(U_{np-i}-1):n\ge 1\}.
\]
Either these $p$ languages are all regular, in which case, the numeration language $L_U$ is regular, or one of them is not regular, in which case $L_U$ is not regular.

We will see that the regularities of these $p$ languages are interconnected. Our approach is based on the graph $G$ which is meant to encode the connections between the $p$ shifts of the alternate base when performing the quasi-greedy algorithm. We will consider the four families of vertices $i$ that we already introduced in \cref{sec:graph}. After dealing with these four cases separately, we will combine them in order to describe a decision procedure that will allow us to determine whether or not the numeration language $L_U$ is regular. These results will not be illustrated until \cref{sec:decision}, where two appropriate examples will be presented to exhibit a variety of behaviors. We invite the reader to consult \cref{ex:ProcedureInfini} and \cref{ex:ProcedureBoucle}, for \cref{sec:i-infini,sec:i-vers-infini}, and \cref{sec:i-cycle,sec:i-vers-cycle}, respectively.

Since the regularities of the languages $L_i$ with $i\in\{0,\ldots,p-1\}$ are linked together, we will be lead to not only consider the maximal words of each length, but also words of the form $\rep_U(U_n-c)$ where $c$ is a constant. Note that \cref{prop:Pref-max=Pref-di} was already stated in this spirit. The following definition will be used in the statements of the four main theorems that describe the situation for each category of vertices in the graph $G$.

\begin{definition}
For $i\in\{0,\ldots,p-1\}$ and positive integers $c$, we define
\[
    L_{i,c}:=\{\rep_{i,c}(n):n\ge 1,\ U_{np-i}\ge c\}
\]
where the notation $\rep_{i,c}(n)$ has been defined in \cref{def:rep-ic}.
\end{definition}

Again, the condition that $U_{np-i}\ge c$ will not be a true restriction as for any $N\ge 0$, the language $L_{i,c}$ is regular if and only if so is the language $L_{i,c}\cap \{w\in A_U^* : |w|\ge N\}$.
Note that we always have $L_i=L_{i,1}$ as $\rep_{i,1}(n)=\rep_U(U_{np-i}-1)$ for all $n\ge 1$.

\section{Vertices with no outgoing edge}
\label{sec:i-infini}

In this section, we consider a (fixed) vertex $i$ with no outgoing edge in the graph $G$. This means that the greedy and the quasi-greedy $B^{(i)}$-expansions of 1 coincide, i.e., $\mathbf{d}_i=\mathbf{d}^*_i$. Since we are dealing with a Parry alternate base, this infinite word is ultimately periodic. We let $q_0$ be the minimal preperiod of $\mathbf{d}_i$ and we let $m_0$ be its minimal period that is a multiple of $p$. Thus, we have 
\[
    \mathbf{d}_i
    =t_{i,0}\cdots t_{i,q_0-1}
    (t_{i,q_0}\cdots t_{i,q_0+m_0-1})^\omega.
\]

The following definition generalizes an idea of Hollander \cite{Hollander:1998} to the non-dominant root case. 

\begin{definition}
\label{def:Delta_iqm}
    For $i\in\{0,\ldots,p-1\}$ such that $\mathbf{d}_i$ is infinite, integers $q\ge 0$ and $m\ge 1$, and integers $n$ such that $np-i\ge q+m$,  we define
    \[
        (\Delta_{i,q,m})_n
        =\left(U_{np-i}-\sum_{\ell=1}^{q+m}t_{i,\ell-1}U_{np-i-\ell}\right)
        -\left(U_{np-i-m}-\sum_{\ell=1}^q t_{i,\ell-1}U_{np-i-m-\ell}\right).
    \]
\end{definition}

In what follows, we will be concerned with properties of the values $(\Delta_{i,q,m})_n$ for large $n$ only. Therefore, we will not always pay much attention to verify that the condition $np-i\ge q+m$ is satisfied. Otherwise stated, we will only consider these values provided that they are indeed well defined. Also, we will allow ourselves to talk about the sequence $\Delta_{i,q,m}$ in reference to the sequence of values $(\Delta_{i,q,m})_n$ for large enough $n$.

\begin{lemma}
\label{lem:Delta-q}
Let $i\in\{0,\ldots,p-1\}$ be such that $\mathbf{d}_i$ is infinite.
For all $q\ge q_0$, all $k\ge 1$ and all $n$ large enough so that the following expressions are well defined, we have
\[
    (\Delta_{i,q,km_0})_n
    =(\Delta_{i,q_0,km_0})_n
    =\sum_{\ell=0}^{k-1}(\Delta_{i,q_0,m_0})_{n-\ell\frac{m_0}{p}}.
\]
\end{lemma}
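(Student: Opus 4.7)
The plan is to establish the two claimed equalities by direct expansion of \cref{def:Delta_iqm}, each relying on the periodicity relation $t_{i,j+m_0}=t_{i,j}$, which holds for all $j\ge q_0$ by the very definition of $q_0$ and $m_0$.

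For the first equality $(\Delta_{i,q,km_0})_n=(\Delta_{i,q_0,km_0})_n$, I would fix $m=km_0$ and subtract the two expressions. The two $U_{np-i}$ terms and the two $U_{np-i-m}$ terms cancel, and after the substitution $\ell\mapsto \ell-m$ on the sum indexed from $q_0+m+1$ to $q+m$, the four remaining digit sums collapse into
\[
\sum_{\ell=q_0+1}^{q}\bigl(t_{i,\ell-1}-t_{i,\ell+m-1}\bigr)\,U_{np-i-m-\ell}.
\]
Since $m=km_0$ is a multiple of $m_0$ and every index $\ell-1$ in the summation range satisfies $\ell-1\ge q_0$, the factor $t_{i,\ell-1}-t_{i,\ell+m-1}$ vanishes by periodicity, proving the first equality.

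For the second equality, I would argue by induction on $k\ge 1$, the base case $k=1$ being immediate. Peeling off the last term of the sum, the inductive step reduces to showing
\[
(\Delta_{i,q_0,(k+1)m_0})_n=(\Delta_{i,q_0,km_0})_n+(\Delta_{i,q_0,m_0})_{n-km_0/p}.
\]
Here the integrality of $m_0/p$, which is built into the very choice of $m_0$, is essential so that $(\Delta_{i,q_0,m_0})_{n-km_0/p}$ is well-defined via \cref{def:Delta_iqm}; moreover the identity $(n-km_0/p)p-i=np-i-km_0$ aligns the base-sequence indices appearing in the shifted $\Delta$ with those in $(\Delta_{i,q_0,km_0})_n$. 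Expanding all three expressions, the two $U_{np-i-km_0}$ terms cancel on the right, the expected boundary terms $U_{np-i}$ and $-U_{np-i-(k+1)m_0}$ appear, and the remaining digit sums are recombined after a shift $\ell\mapsto \ell+km_0$; periodicity then identifies $t_{i,\ell-km_0-1}$ with $t_{i,\ell-1}$ on the shifted range (whose indices lie past $q_0$), producing exactly the sum $\sum_{\ell=1}^{q_0+(k+1)m_0}t_{i,\ell-1}U_{np-i-\ell}$ required by $(\Delta_{i,q_0,(k+1)m_0})_n$.

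The main obstacle is purely book-keeping: tracking several telescoping index ranges simultaneously, performing the correct substitutions so that the $U$-indices align, and making sure that the periodicity of $\mathbf{d}_i$ is only invoked on digit positions that lie in its periodic part, i.e., at or beyond position $q_0$, so that the preperiodic prefix is never disturbed. Once these shifts are properly aligned, both computations reduce to routine cancellations.
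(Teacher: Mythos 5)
Your proposal is correct and follows essentially the same route as the paper: the first equality by cancelling the digit sums via the periodicity $t_{i,j}=t_{i,j+m_0}$ for $j\ge q_0$, and the second by induction on $k$ using the telescoping identity relating $(\Delta_{i,q_0,(k+1)m_0})_n$ to $(\Delta_{i,q_0,km_0})_n+(\Delta_{i,q_0,m_0})_{n-km_0/p}$. The only cosmetic difference is that the paper first writes the exact identity $(\Delta_{i,q_0,(k+1)m_0})_n=(\Delta_{i,q_0+m_0,km_0})_n+(\Delta_{i,q_0,m_0})_{n-km_0/p}$ and then invokes the already-proved first equality, whereas you re-apply periodicity directly in the expansion; both amount to the same bookkeeping.
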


\begin{proof}
Since $q\ge q_0$, we have
\[
    (\Delta_{i,q,km_0})_n-(\Delta_{i,q_0,km_0})_n
    =-\sum_{\ell=q_0+km_0+1}^{q+km_0}t_{i,\ell-1}U_{np-i-\ell}
    +\sum_{\ell=q_0+1}^q t_{i,\ell-1}U_{np-i-km_0-\ell}.
\]
Since $t_{i,\ell-1}=t_{i,\ell-1-km_0}$ for $\ell>q_0+km_0$, the two sums cancel and we obtain the first announced 
equality. 

We now turn to the second equality of the statement.
We proceed by induction on $k$. For $k=1$, the result is immediate. Now, assume that the equality holds for some $k\ge 1$ and let us show it for $k+1$. Then we have
\begin{align*}
    (\Delta_{i,q_0,(k+1)m_0})_n
    &=(\Delta_{i,q_0+m_0,km_0})_n+(\Delta_{i,q_0,m_0})_{n-k\frac{m_0}{p}} \\
    &=(\Delta_{i,q_0,km_0})_n+(\Delta_{i,q_0,m_0})_{n-k\frac{m_0}{p}} \\
    &=\sum_{\ell=0}^{k-1}(\Delta_{i,q_0,m_0})_{n-\ell\frac{m_0}{p}}+(\Delta_{i,q_0,m_0})_{n-k\frac{m_0}{p}} \\
    &=\sum_{\ell=0}^k(\Delta_{i,q_0,m_0})_{n-\ell\frac{m_0}{p}}
\end{align*}
where we have used the first part of the statement for the second step and the recurrence hypothesis for the third one.
\end{proof}

We now present the main result of this section. 

\begin{theorem}
\label{thm:i-infini}
Let $i\in\{0,\ldots,p-1\}$ be such that $\mathbf{d}_i$ is infinite. The following assertions are equivalent.

\smallskip
\begin{enumerate}[label=(\alph*)] \setlength\itemsep{0.7em}
    \item \label{item:i-infini-a}The language $L_i$ is regular. 
    \item \label{item:i-infini-b}For all $c\ge 1$, the languages $L_{i,c}$ are regular.
    \item \label{item:i-infini-c}There exists $k\ge 1$ such that the sequence $\Delta_{i,q_0,km_0}$ is ultimately zero.
    \item \label{item:i-infini-d}There exists $q\ge q_0$ and $k\ge 1$ such that the sequence $\Delta_{i,q,km_0}$ is ultimately zero.
\end{enumerate}
\end{theorem}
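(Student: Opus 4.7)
The plan is to prove the equivalences via the chain $(b)\Rightarrow(a)\Rightarrow(c)\Leftrightarrow(d)\Rightarrow(b)$. The implication $(b)\Rightarrow(a)$ is immediate since $L_i=L_{i,1}$, and $(c)\Leftrightarrow(d)$ follows at once from the first equality in \cref{lem:Delta-q}, which identifies $\Delta_{i,q,km_0}$ with $\Delta_{i,q_0,km_0}$ for every $q\ge q_0$ and $k\ge 1$.

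For $(a)\Rightarrow(c)$, I would use the slenderness of $L_i$ (at most one word per length) together with \cref{lem:xy*z}, applied via the refined disjoint decomposition discussed in the proof of \cref{prop:RegIffMaxReg}, to write $L_i$ as the union of a finite set with $\bigcup_{j} x_j y_j^* z_j$, where all $|y_j|$ equal a common value $d$. Since every length in $L_i$ is congruent to $-i$ modulo $p$, we must have $p\mid d$. By \cref{cor:Pref-max=Pref-di}, $\rep_U(U_{np-i}-1)\to\mathbf{d}_i$ in the product topology, so within each residue class $j$ the decomposition forces $x_j y_j^{\omega}=\mathbf{d}_i$. Minimality of $q_0$ (as the smallest index from which $\mathbf{d}_i$ is purely periodic) then yields $|x_j|\ge q_0$, while $d$ is a period of $\mathbf{d}_i$ divisible by $p$, hence a multiple of $m_0$ by minimality of $m_0$. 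Writing $d=km_0$ and setting $R_n(q):=U_{np-i}-\sum_{\ell=1}^{q}t_{i,\ell-1}U_{np-i-\ell}$, I would compare $\rep_U(U_{(n-d/p)p-i}-1)=x_j y_j^r z_j$ with $\rep_U(U_{np-i}-1)=x_j y_j^{r+1} z_j$: both decompositions share the same tail after the prefixes $x_j y_j$ and $x_j$ respectively, and evaluating $U$-values yields $R_n(|x_j|+d)=R_{n-d/p}(|x_j|)$ for large $n$ in residue class $j$. This is exactly $(\Delta_{i,|x_j|,km_0})_n=0$, which by \cref{lem:Delta-q} equals $(\Delta_{i,q_0,km_0})_n$. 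Taking $n$ large enough to cover all finitely many residue classes yields $(c)$.

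For $(c)\Rightarrow(b)$, fix $c\ge 1$ and assume $(\Delta_{i,q_0,km_0})_n=0$ for all $n\ge N$. Since $i$ has no outgoing edge in $G$, the only admissible intermediate word is $\mathbf{w}_{i,0}=\mathbf{d}_i$, and \cref{prop:Pref-max=Pref-di} gives $\rep_{i,c}(n)\to\mathbf{d}_i$ in the product topology. In particular, for large $n$ the prefix of length $q_0+km_0$ of $\rep_{i,c}(n)$ coincides with $t_{i,0}\cdots t_{i,q_0+km_0-1}$. Combining this with the vanishing of $\Delta$, the residual suffix of $\rep_{i,c}(n)$ has the same $U$-value and the same length as the suffix of $\rep_{i,c}(n-km_0/p)$ following its first $q_0$ letters. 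Hence $\rep_{i,c}(n)$ is obtained from $\rep_{i,c}(n-km_0/p)$ by inserting the fixed block $y:=t_{i,q_0}\cdots t_{i,q_0+km_0-1}$ right after the first $q_0$ letters. Iterating within each of the finitely many residue classes of $n$ modulo $km_0/p$ expresses $L_{i,c}$ as a finite set together with a finite union of sets $x_r y^* z_r$, hence as a regular language.

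The chief technical obstacle lies in $(a)\Rightarrow(c)$: showing that in the canonical decomposition the period $d$ is simultaneously a multiple of $p$ and of $m_0$ and that every $|x_j|$ is at least $q_0$. Both rely on the minimality of $q_0$ and $m_0$ in the ultimately periodic structure of $\mathbf{d}_i$ combined with the convergence of $\rep_U(U_{np-i}-1)$ to $\mathbf{d}_i$. A secondary subtlety, arising in $(c)\Rightarrow(b)$, is to manage the leading-zero padding implicit in $\rep_{i,c}(n)$ in the degenerate case $\beta_i=1$, so that the insertion identity indeed matches lengths on both sides.
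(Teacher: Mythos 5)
Your proposal is correct and follows essentially the same route as the paper's proof: the same implication chain, the same use of \cref{lem:Delta-q} for the equivalence of the two $\Delta$-conditions, the same slender decomposition combined with \cref{cor:Pref-max=Pref-di} to identify $xy^\omega$ with $\mathbf{d}_i$ and extract the vanishing of $\Delta_{i,q,Mp}$, and the same value-and-length comparison (using that both suffixes lie in the suffix-closed language $L_U$) to establish the block-insertion identity for the converse. The only cosmetic differences are that the paper proves $(a)\Rightarrow(d)$ with a single prefix $x$ rather than per-class prefixes $x_j$, and that your worry about $\beta_i=1$ is vacuous here since $\beta_i=1$ would force $\mathbf{d}_i$ to be finite.
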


\begin{proof}
Clearly, \ref{item:i-infini-b} implies \ref{item:i-infini-a}. By using the first equality of \cref{lem:Delta-q}, we see that \ref{item:i-infini-c} and \ref{item:i-infini-d} are equivalent. We show that \ref{item:i-infini-a} implies \ref{item:i-infini-d}. Thus, we suppose that the language $L_i$ is regular. By \cref{lem:xy*z,cor:Pref-max=Pref-di}, we can split this language into a union of the form
\[
    L_i=F\cup \bigcup_{e=0}^{M-1}xy^*z_e
\]
where $F$ is a finite language, $M$ is a positive integer, $|y|=Mp$ and $\mathbf{d}_i=xy^\omega$. We must have that $|x|\ge q_0$ and $|y|$ is a multiple of $m_0$. Set $q=|x|$. We aim to show that $\Delta_{i,q,Mp}$ is eventually zero.

Let $n$ be large enough so that $\rep_{i,1}(n-M)\notin F$. Then there exist $e\in\{0,\ldots,M-1\}$ and $r\ge 1$ such that $\rep_{i,1}(n)=xy^rz_e$ and $\rep_{i,1}(n-M)=xy^{r-1}z_e$. Thus, we have
\begin{align*}
    U_{np-i}-U_{(n-M)p-i}
    & =(U_{np-i}-1)-(U_{(n-M)p-i}-1) \\
    & =\val_U(xy^rz_e)-\val_U(xy^{r-1}z_e) \\ 
    & =\val_U(xy0^{(r-1)|y|+|z_e|})-\val_U(x0^{(r-1)|y|+|z_e|}) \\
    & =\sum_{\ell=1}^{q+Mp} t_{i,\ell-1}U_{np-i-\ell}-\sum_{\ell=1}^q t_{i,\ell-1}U_{(n-M)p-i-\ell}.
\end{align*}  
We have therefore shown that $\Delta_{i,q,Mp}$ is ultimately zero, as expected.
\medskip

We now show that \ref{item:i-infini-d} implies \ref{item:i-infini-b}. Let $c\ge 1$ be fixed, and assume that there is some $q\ge q_0$ and $m$ multiple of $m_0$ such that $\Delta_{i,q,m}$ is ultimately zero, i.e., there exists $N_1$ such that $(\Delta_{i,q,m})_n=0$ for all $n\ge N_1$. As $m_0$ is a multiple of $p$, we can write $m=Mp$. By \cref{cor:Pref-max=Pref-di}, there exists $N_2$ such that for all $n\ge N_2$, the prefix of length $q+m$ of the word $\rep_{i,c}(n)$ is $t_{i,0}\cdots t_{i,q+m-1}$. Let $N=\max\{N_1,N_2\}$ and consider a fixed $n\ge N$. Set $z$ to be the suffix defined as 
\[
    \rep_{i,c}(n) =t_{i,0}\cdots t_{i,q-1} z.
\] 
We show by induction that
\begin{equation}
\label{eq:a}
    \rep_{i,c}(n+aM)=t_{i,0}\cdots t_{i,q-1}(t_{i,q}\cdots t_{i,q+m-1})^a z.
\end{equation}
 for all $a\ge 0$. The base case $a=0$ is the definition of $z$. Now, fix some $a\ge 0$ and suppose that \eqref{eq:a} holds. Let $s$ be the suffix defined as
\[
    \rep_{i,c}(n+(a+1)M)=t_{i,0}\cdots t_{i,q+m-1} s.
\]
We have to show that $s=(t_{i,q}\cdots t_{i,q+m-1})^a z$. These two words have the same length $np+am-i-q$. Moreover, they both belong to $L_U$ since they are suffixes of words in $L_U$. Therefore, in order to see that  these words are actually equal, it suffices to show that they have the same value. Since $(\Delta_{i,q,m})_{n+(a+1)M}=0$, we know that
\[
    U_{(n+(a+1)M)p-i}-U_{(n+aM)p-i}
    =\sum_{\ell=1}^{q+m}t_{i,\ell-1}U_{(n+(a+1)M)p-i-\ell}
    -\sum_{\ell=1}^qt_{i,\ell-1}U_{(n+aM)p-i-\ell}.
\]
Using the induction hypothesis, the latter equality and the definition of $s$, we successively obtain 
\begin{align*}
    \val_U\big((t_{i,q}\cdots t_{i,q+m-1})^a z\big)
    &= U_{(n+aM)p-i}-c - \sum_{\ell=1}^{q}t_{i,\ell-1}U_{(n+aM)p-i-\ell} \\
    &= U_{(n+(a+1)M)p-i}-c - \sum_{\ell=1}^{q+m}t_{i,\ell-1}U_{(n+(a+1)M)p-i-\ell} \\
    &= \val_U(s),
\end{align*}
as desired.
Thus, we have shown that for all $n\ge N$, there exists a finite word $z_n$ such that
\[
    \{\rep_{i,c}(n+aM): a\ge 0\}
    =t_{i,0}\cdots t_{i,q-1}(t_{i,q}\cdots t_{i,q+m-1})^* z_n.
\]
As in \cref{lem:SplitModp}, this implies that the language $L_{i,c}$ is regular since 
\[
    L_{i,c}\cap \{w\in A_U^* : |w|\ge Np-i\}
    =\bigcup_{n=N}^{N+M-1}\{\rep_{i,c}(n+aM): a\ge 0\}.
\]
\end{proof}

The following result shows that \cref{thm:i-infini} can be used in practice to decide whether the language $L_i$ is regular.

\begin{proposition} 
\label{prop:i-infini-effective}
    Assume that the eigenvalues of $U$ are known and let $i\in\{0,\ldots, p-1\}$ be such that ${\mathbf d}_i$ is ultimately periodic. Then the condition \ref{item:i-infini-c} of \cref{thm:i-infini} is effective. 
\end{proposition}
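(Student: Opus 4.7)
The plan is to reduce condition~\ref{item:i-infini-c} to an algebraic test on the exponential polynomial expansion of the linear recurrence sequence $D_n := (\Delta_{i, q_0, m_0})_n$. First, by \cref{def:Delta_iqm}, $D_n$ is a fixed $\Z$-linear combination of values $U_{np - i - \ell}$, and each sequence $n \mapsto U_{np - i - \ell}$ satisfies a linear recurrence whose eigenvalues are the $p$-th powers of those of $U$. So knowing the eigenvalues of $U$ lets us effectively compute the expansion $D_n = \sum_j P_j(n) \gamma_j^n$ with distinct nonzero algebraic numbers $\gamma_j$ and polynomial coefficients $P_j$ (the $P_j$ being recovered by linear algebra from finitely many initial values of $D$, themselves computable from $U$ and the known digits of $\mathbf{d}_i$).

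Second, by \cref{lem:Delta-q} we have $(\Delta_{i, q_0, km_0})_n = \sum_{\ell=0}^{k-1} D_{n - \ell r}$ with $r := m_0 / p$. Substituting the exponential polynomial expansion, linear independence of distinct exponentials tells us that this sum is ultimately zero iff, for every $j$ with $P_j \not\equiv 0$, the polynomial
\[
Q_{j,k}(n) := \sum_{\ell=0}^{k-1} P_j(n - \ell r) \gamma_j^{-\ell r}
\]
vanishes identically in $n$. A telescoping computation yields the identity $Q_{j,k}(n) - \gamma_j^{-r} Q_{j,k}(n - r) = P_j(n) - \gamma_j^{-kr} P_j(n - kr)$. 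Assuming $Q_{j,k} \equiv 0$, comparison of the leading coefficients of $P_j(n)$ and $\gamma_j^{-kr} P_j(n-kr)$ forces $\gamma_j^{kr} = 1$; then $P_j(n) = P_j(n - kr)$ as polynomials, and since a nonconstant polynomial cannot be translation-periodic, $P_j$ must be constant. Plugging $P_j = c$ back yields $Q_{j,k}(n) = c \sum_{\ell=0}^{k-1} \gamma_j^{-\ell r}$, which vanishes precisely when $\gamma_j^r \neq 1$ and $\gamma_j^{kr} = 1$. Conversely, these conditions clearly force $Q_{j,k} \equiv 0$.

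Combining the above observations yields the effective test: condition~\ref{item:i-infini-c} holds if and only if, for every $j$ with $P_j \not\equiv 0$, the polynomial $P_j$ is constant and $\gamma_j^r$ is a root of unity different from $1$. When this holds, $k := \mathrm{lcm}$ of the orders of those $\gamma_j^r$ witnesses the condition. Each ingredient of the test is effective given the eigenvalues of $U$: computing the $P_j$ is linear algebra over a known algebraic number field, checking constancy of a polynomial is trivial, and deciding whether an algebraic number is a root of unity (together with computing its order) is a classical effective problem---for instance by testing whether its minimal polynomial is cyclotomic (Kronecker's theorem). The main obstacle will be verifying the operator-level identity of the second paragraph carefully enough to establish that $P_j$ must be constant; beyond that, the rest is routine computable algebra.
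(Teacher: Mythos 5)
Your proposal is correct and follows essentially the same route as the paper: both rest on \cref{lem:Delta-q} to reduce condition \ref{item:i-infini-c} to a property of the single sequence $\Delta_{i,q_0,m_0}$, and both arrive at the same test, namely that its nonzero eigenvalues must all be roots of unity whose order does not divide $m_0/p$ (with $k$ then taken as an lcm of orders). The only difference is bookkeeping: the paper phrases the test as divisibility of characteristic polynomials, using $\bigl(1-X^{m_0/p}\bigr)\bigl(1+X^{m_0/p}+\cdots+X^{(k-1)m_0/p}\bigr)=1-X^{km_0/p}$, whereas you work directly with the exponential-polynomial expansion $\sum_j P_j(n)\gamma_j^n$; a small benefit of your version is that it makes explicit the requirement that these root-of-unity eigenvalues be simple (each $P_j$ constant), which in the paper is only implicit in the fact that $1+X^{m_0/p}+\cdots+X^{(k-1)m_0/p}$ is squarefree.
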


\begin{proof}
   Since ${\mathbf d}_i$ is known to be ultimately periodic, the values $q_0$ and $m_0$ (as defined in the beginning of \cref{sec:i-infini}) can be found by inspecting the remainders in the greedy algorithm. Note that if $U$ satisfies a recurrence relation of characteristic polynomial $P$, then the same is true for $\Delta_{i, q_0, m_0}$. As a result, the eigenvalues of $\Delta_{i,q_0,m_0}$ are among those of $U$ and can be effectively identified. Now, given \cref{lem:Delta-q}, for any $k\ge 1$, the sequence $\Delta_{i,q_0,km_0}$ is ultimately zero if and only if the sequence $\Delta_{i,q_0,m_0}$ ultimately satisfies the recurrence relation of characteristic polynomial 
    $1+X^{\frac{m_0}{p}}+\ldots+X^{(k-1)\frac{m_0}{p}}$. Since
    \[
        \left(1-X^{\frac{m_0}{p}}\right)\left(1+X^{\frac{m_0}{p}}+\ldots+X^{(k-1)\frac{m_0}{p}}\right)=1-X^{\frac{km_0}{p}},
    \]
    one of the following two cases must happen. 
    If the eigenvalues of $\Delta_{i,q_0,m_0}$ are all zero or roots of unity of order not dividing $\frac{m_0}{p}$, then the least common multiple of those orders is the desired $k$ so that \ref{item:i-infini-c} holds. Otherwise, there exists no $k$ such that \ref{item:i-infini-c} holds. 
\end{proof}

In the dominant root case studied by Hollander, \cref{thm:i-infini} corresponds to the case where $\beta$ is a non-simple Parry number; see \cite[Lemmas 7.3 and 7.4]{Hollander:1998}. In this case, we have $p=1$ and we only have to consider the sequence $\Delta_{q,m}$ defined by
    \[
        (\Delta_{q,m})_n
        =\left(U_n-\sum_{\ell=1}^{q+m}t_{\ell-1} U_{n-\ell}\right)
        -\left(U_{n-m}-\sum_{\ell=1}^q t_{\ell-1} U_{n-m-\ell}\right)
    \]
where the coefficients $t_j$ are the digits of the greedy expansion of the dominant root, which is a Parry number: $d_\beta(1)=t_0\cdots t_{q-1}(t_q\cdots t_{q+m-1})^\omega$. It is easily seen that this sequence is ultimately zero if and only if the base sequence $U$ ultimately satisfies the linear recurrence relation of characteristic polynomial
\[
    P_{q,m}:=\left(X^{q+m}-\sum_{\ell=1}^{q+m}t_{\ell-1} X^{q+m-\ell}\right)
    -\left(X^q-\sum_{\ell=1}^q t_{\ell-1} X^{q-\ell}\right).
\]
Such a polynomial $P_{q,m}$ is called an \emph{extended $\beta$-polynomial} by Hollander. In particular, whenever $q$ and $m$ are chosen to be the minimal preperiod $q_0$ and period $m_0$ of $d_\beta(1)$, we obtain the so-called \emph{Parry polynomial} $P_{q_0,m_0}$, also called \emph{the $\beta$-polynomial}. As noted by Hollander, all extended $\beta$-polynomials can be obtained by multiplying the Parry polynomial $P_{q_0,m_0}$  by some very specific polynomials, as we have
\begin{equation}
\label{eq:extended-beta-polynomials}
    P_{q,km_0}=X^{q-q_0}(1+X^{m_0}+\cdots+X^{(k-1)m_0}) P_{q_0,m_0}
\end{equation}
for any $q\ge q_0$ and $k\ge 1$.

With these comments, we see that the first part of the main result of \cite{Hollander:1998} can be re-obtained as a corollary of \cref{thm:i-infini}. 

\begin{corollary}[\cite{Hollander:1998}]
    Let $U$ be a positional numeration system with a dominant root $\beta>1$ such that $d_\beta(1)$ is ultimately periodic, i.e., $\beta$ is a non-simple Parry number. The numeration language $L_U$ is regular if and only if the base sequence $U$ satisfies a linear recurrence relation whose characteristic polynomial is given by an extended $\beta$-polynomial.
\end{corollary}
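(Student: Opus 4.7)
The plan is to deduce the corollary directly from Theorem~\ref{thm:i-infini}. The hypotheses give $p = 1$ for the alternate base associated with $U$ (which is simply $(\beta)$), and since $\beta$ is a non-simple Parry number, $\mathbf{d}_0 = d_\beta(1)$ is infinite and ultimately periodic. The graph $G$ therefore consists of the single vertex $0$ with no outgoing edge, so we are exactly in the setting of \cref{sec:i-infini} with $i = 0$. By \cref{lem:Max-Words} and \cref{prop:RegIffMaxReg}, the regularity of $L_U$ is equivalent to that of $L_0 = \{\rep_U(U_n - 1) : n \geq 1\}$, so it suffices to characterize when $L_0$ is regular.

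Next, I would invoke \cref{thm:i-infini}, specifically the equivalence \ref{item:i-infini-a}$\Leftrightarrow$\ref{item:i-infini-d}: the language $L_0$ is regular if and only if there exist $q \geq q_0$ and $k \geq 1$ such that the sequence $\Delta_{0, q, k m_0}$ is ultimately zero. Unpacking \cref{def:Delta_iqm}, the condition $(\Delta_{0, q, k m_0})_n = 0$ for all sufficiently large $n$ is, after a trivial index shift, exactly the statement that $U$ eventually satisfies the linear recurrence relation whose characteristic polynomial is $P_{q, k m_0}$. By \eqref{eq:extended-beta-polynomials}, as $q$ ranges over integers $\geq q_0$ and $k$ over positive integers, these polynomials $P_{q, k m_0}$ are precisely the extended $\beta$-polynomials. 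So the equivalence becomes: $L_U$ is regular iff $U$ eventually satisfies a linear recurrence whose characteristic polynomial is an extended $\beta$-polynomial.

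The last step is to pass from "eventually satisfies" to "satisfies" in the sense of the corollary. If $U$ satisfies the recurrence with characteristic polynomial $P_{q, k m_0}$ only from position $N$ onward, then $U$ satisfies the recurrence with characteristic polynomial $X^N P_{q, k m_0}$ from the very start; and by \eqref{eq:extended-beta-polynomials}, this equals $P_{q + N, k m_0}$, which is still an extended $\beta$-polynomial. Conversely, satisfying a recurrence trivially implies ultimately satisfying it. Combining the three paragraphs yields the stated equivalence.

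I do not anticipate a real obstacle: all the deep work is done by \cref{thm:i-infini}, and what remains is the bookkeeping identification $\Delta_{0, q, k m_0} = 0 \Leftrightarrow U$ satisfies the recurrence $P_{q, k m_0}$, together with the harmless absorption of an initial offset into a higher power of $X$ via the factorization \eqref{eq:extended-beta-polynomials}. The only mildly subtle point is making sure the prefix length $q_0$ and period $m_0$ of $\mathbf{d}_0$ in the sense of Section~\ref{sec:i-infini} coincide with the minimal preperiod and period of $d_\beta(1)$ in Hollander's sense, which is immediate since $p = 1$ forces the "period multiple of $p$" constraint to be vacuous.
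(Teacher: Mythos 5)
Your proposal is correct and follows essentially the same route as the paper, which derives this corollary from \cref{thm:i-infini} (equivalence of \ref{item:i-infini-a} and \ref{item:i-infini-d}) via the observation that $\Delta_{0,q,km_0}$ being ultimately zero is the same as $U$ ultimately satisfying the recurrence of characteristic polynomial $P_{q,km_0}$, together with the identification of these polynomials as the extended $\beta$-polynomials through \eqref{eq:extended-beta-polynomials}. Your extra remark that an initial offset $N$ can be absorbed as $X^N P_{q,km_0}=P_{q+N,km_0}$ is a correct tidying of a point the paper leaves implicit.
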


The relationships given in \cref{lem:Delta-q} can be viewed as an analogue of \eqref{eq:extended-beta-polynomials}. However, in the general setting considered in this paper, i.e., with possibly $p\ge 2$, there are no such nice polynomials associated with the sequences $\Delta_{i,q,m}$. This makes the situation more complex and forces us to work with the graph $G$ as a whole rather than to independently consider each vertex $i$ of $G$.

\section{Vertices leading to a vertex with no outgoing edge}
\label{sec:i-vers-infini}

In this section, we investigate the regularity of $L_i$ when there is a non-trivial path from $i$ to a vertex $\sigma^s(i)$ with no outgoing edge in the graph $G$. This means that $\mathbf{d}_i$ is finite but, after $s\ge 1$ steps, the quasi-greedy algorithm sees an infinite expansion $\mathbf{d}_{\sigma^s(i)}$ and stops. Note that this situation never occurs whenever $p=1$, that is, whenever the numeration system $U$ has a dominant root. As in the previous section, our proofs will rely on the introduction of an auxiliary sequence $\Delta$. However, the definition of $\Delta$ must be adapted to account for the finiteness of our expansions. This definition will be used in this section as well as the two following ones.

\begin{definition}
\label{def:Delta_i}
For $i\in\{0,\ldots,p-1\}$ such that $\mathbf{d}_i$ is finite and for all integers $n$ such that $np-i\ge \ell_i$, we let
\[
    (\Delta_i)_n
    =U_{np-i}-\sum_{\ell=1}^{\ell_i}t_{i,\ell-1} U_{np-i-\ell}.
\]
\end{definition}

As before, we are only concerned with such values for large $n$, so we usually omit to verify the condition $np-i\ge \ell_i$.    
We will be interested not only in the values of $\Delta_i$ for the current $i$, but also for its successors $\sigma^h(i)$ in the graph $G$, for $h\in\{0,\ldots,s-1\}$. 

Our main result in this section is the following one. In particular, it describes a previously unseen behavior when only considering systems with a dominant root.

\begin{theorem}
\label{thm:i-vers-infini}
Let $i\in\{0,\ldots,p-1\}$ be such that there exists $s\ge 1$ such that $\mathbf{d}_{\sigma^s(i)}$ is infinite but $\mathbf{d}_i,\mathbf{d}_{\sigma(i)},\ldots,\mathbf{d}_{\sigma^{s-1}(i)}$ are all finite, and
assume that the languages $L_{\sigma(i)},\ldots,L_{\sigma^s(i)}$ are regular. Then the following assertions are equivalent.

\smallskip
\begin{enumerate}[label=(\alph*)] \setlength\itemsep{0.7em}
    \item \label{item:i-vers-infini-a} The language $L_i$ is regular.
    \item \label{item:i-vers-infini-b} For all $c\ge 1$, the languages $L_{i,c}$ are regular.
    \item \label{item:i-vers-infini-c} The sequence $\Delta_i$ is ultimately periodic.
\end{enumerate}
\end{theorem}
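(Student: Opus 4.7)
The plan is to base everything on a canonical factorization of $\rep_{i,c}(n)$ for large $n$. The iterated quasi-greedy process gives $\mathbf{d}_i^*=d'_i d'_{\sigma(i)}\cdots d'_{\sigma^{s-1}(i)}\mathbf{d}_{\sigma^s(i)}=\mathbf{w}_{i,s}$, while the intermediate words $\mathbf{w}_{i,j}$ with $j<s$ already diverge from $\mathbf{w}_{i,s}$ within the first $k_{i,s}$ positions. Hence \cref{prop:Pref-max=Pref-di}, applied with $L=k_{i,s}$, forces $\rep_{i,c}(n)=d'_i d'_{\sigma(i)}\cdots d'_{\sigma^{s-1}(i)}w_n$ for every sufficiently large $n$. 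A telescoping computation, using the identity $\val_U(d'_{\sigma^h(i)}0^k)=\sum_{\ell=1}^{\ell_{\sigma^h(i)}} t_{\sigma^h(i),\ell-1}U_{\ell_{\sigma^h(i)}-\ell+k}-U_k$ together with the shift $\sigma^h(i)=i+k_{i,h}-m_{i,h}p$ (so that $np-i-k_{i,h}=n_h p-\sigma^h(i)$ with $n_h:=n-m_{i,h}$), then yields
\[
    \val_U(w_n)=U_{n_sp-\sigma^s(i)}-c_n,\qquad c_n:=c-\sum_{h=0}^{s-1}(\Delta_{\sigma^h(i)})_{n_h}.
\]
Since $|w_n|=n_sp-\sigma^s(i)$ and $w_n$ is greedy, this forces $c_n\ge 1$ and $w_n=\rep_{\sigma^s(i),c_n}(n_s)$.

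\textbf{Easy implications via induction on $s$.} The implication \ref{item:i-vers-infini-b}$\Rightarrow$\ref{item:i-vers-infini-a} is trivial as $L_i=L_{i,1}$. Applying the theorem inductively at each $\sigma^h(i)$, $1\le h\le s-1$, whose distance $s-h<s$ to the terminal vertex fits the induction hypothesis and whose associated language is regular by our overall assumption, one obtains ultimate periodicity of each $\Delta_{\sigma^h(i)}$; \cref{thm:i-infini} applied to $L_{\sigma^s(i)}$ yields regularity of every $L_{\sigma^s(i),c'}$. For \ref{item:i-vers-infini-c}$\Rightarrow$\ref{item:i-vers-infini-b}, hypothesis \ref{item:i-vers-infini-c} combined with these inductive conclusions makes $n\mapsto c_n$ ultimately periodic, so $c_n$ takes only finitely many positive values $c^{(1)},\ldots,c^{(k)}$, each on an ultimate arithmetic progression of $n$'s. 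Thus $L_{i,c}$ coincides, up to a finite exceptional set, with $\bigcup_j d'_i\cdots d'_{\sigma^{s-1}(i)}\cdot\bigl(L_{\sigma^s(i),c^{(j)}}\cap\{w:|w|\equiv r_j\bmod M\}\bigr)$, which is regular.

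\textbf{The hard implication.} For \ref{item:i-vers-infini-a}$\Rightarrow$\ref{item:i-vers-infini-c}, regularity of $L_i$ transfers to the thin suffix language $W:=\{w_n:n\text{ large}\}$, which by \cref{lem:xy*z} decomposes as $F\cup\bigsqcup_e x_e y_e^* z_e$ with each $|y_e|$ chosen to be a multiple of $p$. Restricting to a residue class of $n$ modulo some $M$ on which $w_n$ and $w_{n+M}$ lie in the same family with consecutive exponents, an explicit computation in the style of step (a)$\Rightarrow$(d) of \cref{thm:i-infini} expresses $c_{n+M}-c_n$ as an integer combination of $U$-terms whose indices tend to infinity. \textbf{The main obstacle} will be to show that this combination must actually vanish for all large $n$, rather than merely admit some weaker asymptotic control. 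The plan is to exploit the convergence $w_n\to\mathbf{d}_{\sigma^s(i)}$ (a direct consequence of \cref{prop:Pref-max=Pref-di} combined with the fact that $\mathbf{d}_{\sigma^s(i)}$ is the only infinite limit available within the graph from $i$), which forces $x_e y_e^\omega=\mathbf{d}_{\sigma^s(i)}$ for every family and therefore pins down each $x_e$ and $y_e$ as specific prefixes/periods of $\mathbf{d}_{\sigma^s(i)}$. Together with the inductive ultimate periodicity of the other $\Delta_{\sigma^h(i)}$ appearing in $c_n$ and the constraint $c_n\ge 1$, this rigidity is what one needs to upgrade the $c_{n+M}-c_n$ identity into an actual vanishing, and hence to conclude that $c_n$, and then $(\Delta_i)_n=c-c_n-\sum_{h=1}^{s-1}(\Delta_{\sigma^h(i)})_{n_h}$, is ultimately periodic.
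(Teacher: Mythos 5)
Your overall architecture (induction on $s$, invoking \cref{thm:i-infini} at the terminal vertex, reducing $L_{i,c}$ to languages $L_{\sigma^s(i),c'}$) matches the paper's, but the ``key decomposition'' on which everything rests is false, and this breaks both directions. \cref{prop:Pref-max=Pref-di} does \emph{not} force $\rep_{i,c}(n)=d'_id'_{\sigma(i)}\cdots d'_{\sigma^{s-1}(i)}w_n$ for all large $n$: it only says that for each $n$ the word $\rep_{i,c}(n)$ agrees to length $L$ with \emph{some} $\mathbf{w}_{i,j}$, $0\le j\le s$, and the index $j$ genuinely varies with $n$. In particular $\rep_{i,c}(n)$ may follow one of the finite words $\mathbf{w}_{i,j}=d'_i\cdots d'_{\sigma^{j-1}(i)}d_{\sigma^j(i)}0^\omega$ with $j<s$, i.e.\ begin with $d'_i\cdots d'_{\sigma^{j-1}(i)}d_{\sigma^j(i)}$ rather than with $d'_i\cdots d'_{\sigma^{s-1}(i)}$; which alternative occurs is governed by the sign of $(\Delta_i^{(h)})_n-c$ (this dichotomy is exactly \cref{lem:tech}). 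A concrete counterexample is the first system of \cref{ex:SuccesseurNonReg}: there $s=1$, $(\Delta_0)_n=1$ for all $n$, and $\rep_{0,1}(n)=30^{2n-1}$ follows $\mathbf{w}_{0,0}=\mathbf{d}_0=30^\omega$, not $d'_0\mathbf{d}_1=221^\omega$; your factorization would require $c_n=1-(\Delta_0)_n=0$, which is not $\ge 1$, and indeed no such $w_n$ exists. The claim that ``$w_n\to\mathbf{d}_{\sigma^s(i)}$'' as a consequence of \cref{prop:Pref-max=Pref-di} is the same error (the convergence statement, \cref{cor:Pref-max=Pref-di}, is only valid when $\mathbf{d}_i$ itself is infinite; items (2) and (6) of \cref{ex:liste-exemples} show maximal words need not converge).

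Consequently your proof of \ref{item:i-vers-infini-c}$\Rightarrow$\ref{item:i-vers-infini-b} is incomplete (residue classes $e$ with ultimate value $C_{i,e}\ge c$ produce languages of the form $d_i0^*\rep_U(C_{i,e}-c)$ intersected with a length constraint, not of the form $d'_i\cdots d'_{\sigma^{s-1}(i)}L_{\sigma^s(i),c'}$, so a case split as in the paper is unavoidable), and your proof of \ref{item:i-vers-infini-a}$\Rightarrow$\ref{item:i-vers-infini-c} collapses: the families $x_ey_e^*z_e$ need not all satisfy $x_ey_e^\omega=\mathbf{d}^*_i$; some may satisfy $x_ey_e^\omega=\mathbf{w}_{i,j}$ with $j<s$ and $y_e=0^{Mp}$, and the paper must (and does) treat these two cases with different telescoping identities, namely \eqref{eq:difference-Delta} versus the computation involving $(\Delta_{\sigma^s(i),q,Mp})_{n-m_{i,s}}$. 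Moreover, even granting your (false) rigidity claim, you explicitly leave the vanishing of $c_{n+M}-c_n$ as ``the main obstacle'' with only a plan to resolve it, so the hard implication is not actually proved. To repair the argument you would need to replace the canonical factorization by the conditional one of \cref{lem:tech} and carry the resulting case analysis through both implications, which is essentially what the paper does.
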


\begin{proof}
The proof proceeds by induction on $s\ge 1$. We assume that the equivalences hold for $\sigma(i),\ldots,\sigma^{s-1}(i)$ and show them for $i$. The base case $s=1$ and the induction step will be addressed simultaneously. Note that, for the base case, our only assumption is that $L_{\sigma(i)}$ is regular since there are no previous equivalences to check.

Since the languages $L_{\sigma(i)},\ldots,L_{\sigma^{s-1}(i)}$ are assumed to be regular, we know by the recurrence hypothesis that the sequences $\Delta_{\sigma(i)},\ldots,\Delta_{\sigma^{s-1}(i)}$ are ultimately periodic, and also that for all $c\ge 1$, the languages $L_{\sigma(i),c},\ldots,L_{\sigma^{s-1}(i),c}$ are regular. Since $L_{\sigma^s(i)}$ is also assumed to be regular and $\mathbf{d}_{\sigma^s(i)}$ is infinite, we can also make use of \cref{thm:i-infini}.

Clearly, \ref{item:i-vers-infini-b} implies \ref{item:i-vers-infini-a}. We show that \ref{item:i-vers-infini-a} implies \ref{item:i-vers-infini-c}. Thus, we assume that $L_i$ is regular. 
By \cref{lem:xy*z} combined with some elementary arithmetical considerations, it can be written as a disjoint union of the form
\[
    L_i=F\cup \bigcup_{e=0}^{M-1}x_ey_e^*z_e
\]
where $F$ is a finite language, $M\ge 1$ can be chosen to be a multiple of the periods of the sequences $\Delta_{\sigma(i)},\ldots,\Delta_{\sigma^{s-1}(i)}$ and a multiple of $k\frac{m_0}{p}$ where $k$ is such that $(\Delta_{\sigma^s(i),q_0,km_0})$ is ultimately zero (with the notation of \cref{thm:i-infini}), and for each $e$, we have $|y_e|=Mp$, $|x_ez_e|\equiv -i\pmod p$, and moreover, $|x_0z_0|=tMp-i$ for some $t\ge 1$, and $|x_{e+1}z_{e+1}|=|x_ez_e|+p$ if $e<M-1$.

Our aim is to show that 
\begin{equation}
\label{eq:Delta-periodique}
    (\Delta_i)_n=(\Delta_i)_{n-M}
\end{equation}  
for all large $n$. Consider $n\ge (t+1)M$ and let $e= n\bmod M$. We have $\rep_{i,1}(n)=x_e y_e^r z_e$ and $\rep_{i,1}(n-M)=x_ey_e^{r-1}z_e$ for some $r\ge 1$. By \cref{prop:Pref-max=Pref-di}, we know that for all $L$ and all large enough $n$, the prefix of length $L$ of $\rep_{i,1}(n)$ coincides with the prefix of length $L$ of some $\mathbf{w}_{i,j}$. Due to our hypothesis on $i$, the possible $j$ are $0,1,\ldots,s$. We consider the cases $j<s$ and $j=s$ separately.

First, suppose that $j<s$. Then 
\[
    \mathbf{w}_{i,j}=d'_i\cdots d'_{\sigma^{j-1}(i)}d_{\sigma^j(i)}0^\omega.
\]
In this case, we see that $y_e$ can only contain zeros, i.e., we have $\mathbf{w}_{i,j}=x_e0^\omega$ and $y_e=0^{Mp}$. 
Then
\begin{align*}
    U_{np-i}-U_{(n-M)p-i} 
    &=(U_{np-i}-1)-(U_{(n-M)p-i}-1) \\
    &=\val_U(x_e0^{rMp}z_e)-\val_U(x_e0^{(r-1)Mp}z_e) \\
    &= \left(\sum_{h=0}^j\sum_{\ell=1}^{\ell_{\sigma^h(i)}}t_{\sigma^h(i),\ell-1} U_{np-i-k_{i,h}-\ell} 
    -\sum_{h=1}^j  U_{np-i-k_{i,h}} \right) \\
    &\quad -\left(\sum_{h=0}^j\sum_{\ell=1}^{\ell_{\sigma^h(i)}}t_{\sigma^h(i),\ell-1} U_{(n-M)p-i-k_{i,h}-\ell} 
    -\sum_{h=1}^j  U_{(n-M)p-i-k_{i,h}} \right).
\end{align*}
Rearranging the terms, this gives
\begin{align*}
    &\left(U_{np-i}-\sum_{\ell=1}^{\ell_i}t_{i,\ell-1} U_{np-i-\ell} \right)
    -\left(U_{(n-M)p-i} -\sum_{\ell=1}^{\ell_i}t_{i,\ell-1} U_{(n-M)p-i-\ell}\right) \\
    &=\quad -\sum_{h=1}^j \left(  U_{np-i-k_{i,h}}
    -\sum_{\ell=1}^{\ell_{\sigma^h(i)}}t_{\sigma^h(i),\ell-1} U_{np-i-k_{i,h}-\ell} \right) \\
    &\qquad +\sum_{h=1}^j \left( U_{(n-M)p-i-k_{i,h}} 
    -\sum_{\ell=1}^{\ell_{\sigma^h(i)}}t_{\sigma^h(i),\ell-1} U_{(n-M)p-i-k_{i,h}-\ell} \right).
\end{align*}
Using \cref{def:Delta_i} and \eqref{eq:m_ij}, this can be reexpressed as
\begin{equation}
    \label{eq:difference-Delta}
    (\Delta_i)_n-(\Delta_i)_{n-M}
    =-\sum_{h=1}^j \left( (\Delta_{\sigma^h(i)})_{n-m_{i,h}} - (\Delta_{\sigma^h(i)})_{n-m_{i,h}-M} \right)
\end{equation}
Since $M$ is a common multiple of the periods of the sequences $\Delta_{\sigma(i)},\ldots,\Delta_{\sigma^{s-1}(i)}$, every term of the sum on the right-hand side is ultimately zero. We thus get~\eqref{eq:Delta-periodique} for all large $n$ such that the corresponding $j$ is less than $s$. Note that for the base case $s=1$, this sum is in fact empty, thus the right-hand side is $0$, which leads us to the same conclusion.

Second, we consider the case $j=s$. We have 
\[
    \mathbf{w}_{i,s}=d'_i\cdots d'_{\sigma^{s-1}(i)}\mathbf{d}_{\sigma^s(i)}
\]
with $\mathbf{d}_{\sigma^s(i)}$ is ultimately periodic. Since we know that $\mathbf{d}_{\sigma^s(i)}$ is not purely periodic, see \cite[Proposition 38]{Charlier&Cisternino:2021}, we must have
\[
   x_e=d'_i\cdots d'_{\sigma^{s-1}(i)}t_{\sigma^s(i),0}\cdots t_{\sigma^s(i),q-1}
\]
and 
\[
   y_e=t_{\sigma^s(i),q}\cdots t_{\sigma^s(i),q+Mp-1}
\]
for some $q$ larger than the preperiod of $\mathbf{d}_{\sigma^s(i)}$. Similarly to the previous case, we obtain
\begin{align*}
    &U_{np-i} - U_{(n-M)p-i} \\
    &\quad= \left( 
    \sum_{h=0}^{s-1}\sum_{\ell=1}^{\ell_{\sigma^h(i)}}t_{\sigma^h(i),\ell-1} U_{np-i-k_{i,h}-\ell} 
    -\sum_{h=1}^s  U_{np-i-k_{i,h}}
    + \sum_{\ell=1}^{q+Mp} t_{\sigma^s(i),\ell-1} U_{np-i-k_{i,s}-\ell} 
    \right)  \\
    &\qquad -\left(
    \sum_{h=0}^{s-1}\sum_{\ell=1}^{\ell_{\sigma^h(i)}}t_{\sigma^h(i),\ell-1} U_{(n-M)p-i-k_{i,h}-\ell} 
    -\sum_{h=1}^s  U_{(n-M)p-i-k_{i,h}} \right. \\
    &\qquad\qquad\left. + \sum_{\ell=1}^q t_{\sigma^s(i),\ell-1} U_{(n-M)p-i-k_{i,s}-\ell} \right).
\end{align*}
Using \cref{def:Delta_i}, \cref{def:Delta_iqm} and \eqref{eq:m_ij}, we get
\begin{align*}
    (\Delta_i)_n-(\Delta_i)_{n-M} 
    &=-\sum_{h=1}^{s-1} \left( (\Delta_{\sigma^h(i)})_{n-m_{i,h}} - (\Delta_{\sigma^h(i)})_{n-m_{i,h}-M} \right) \\
    &\qquad -\left( U_{np-i-k_{i,s}} 
    - \sum_{\ell=1}^{q+Mp} t_{\sigma^s(i),\ell-1} U_{np-i-k_{i,s}-\ell}
     \right. \\
    &\qquad\qquad \left. -U_{(n-M)p-i-k_{i,s}}
    + \sum_{\ell=1}^q t_{\sigma^s(i),\ell-1} U_{(n-M)p-i-k_{i,s}-\ell} 
    \right) \\
    &=-\sum_{h=1}^{s-1} \left( (\Delta_{\sigma^h(i)})_{n-m_{i,h}} - (\Delta_{\sigma^h(i)})_{n-m_{i,h}-M} \right) + (\Delta_{\sigma^s(i),q,Mp})_{n-m_{i,s}}.
\end{align*}
As in the previous case, every term of the sum over $h$ is ultimately zero. Moreover, the additional term $(\Delta_{\sigma^s(i),q,Mp})_{n-m_{i,s}}$ is also ultimately equal to zero by using \cref{lem:Delta-q} and the link between \cref{thm:i-infini} and the choice of $M$. Therefore \eqref{eq:Delta-periodique} also holds for all large $n$ such that the corresponding $j$ is equal to $s$.

\medskip
We now move on to the proof that \ref{item:i-vers-infini-c} implies \ref{item:i-vers-infini-b}. Assume that the sequence $\Delta_i$ is ultimately periodic, say with period $M$, and let $c\ge 1$ be fixed. By \cref{lem:SplitModp}, in order to show that $L_{i,c}$ is regular, it suffices to prove that there exists $N$ such that the $M$ languages 
$\{\rep_{i,c}(nM+e) : n\ge N\}$
are regular, for $e\in\{0,\ldots,M-1\}$.

Consider some fixed $e\in\{0,\ldots,M-1\}$. By \cref{prop:Pref-max=Pref-di}, we know that for $n$ large enough, the word $\rep_{i,c}(nM+e)$ starts with $t_{i,0}\cdots t_{i,\ell_i-2}$ and the next digit is either $t_{i,\ell_i-1}$ or $t_{i,\ell_i-1}-1$. This next digit is 
\begin{align*}
    \left\lfloor\frac{U_{(nM+e)p-i}-c-\sum_{\ell=1}^{\ell_i-1}t_{i,\ell-1}U_{(nM+e)p-i-\ell}}{U_{(nM+e)p-i-\ell_i}} \right\rfloor
   & = \left\lfloor\frac{(\Delta_i)_{nM+e}-c+t_{i,\ell_i-1}U_{(nM+e)p-i-\ell_i}}{U_{(nM+e)p-i-\ell_i}} \right\rfloor \\
   & = t_{i,\ell_i-1}+\left\lfloor\frac{(\Delta_i)_{nM+e}-c}{U_{(nM+e)p-i-\ell_i}} \right\rfloor.
\end{align*}   
By hypothesis, $(\Delta_i)_{nM+e}$ is ultimately a constant. We denote this constant value by $C_{i,e}$. We separate our work based on whether $C_{i,e}\ge c$ or $C_{i,e}< c$.

First, suppose that  $C_{i,e}\ge c$. For $n$ large, the next digit of $\rep_{i,c}(nM+e)$ is $t_{i,\ell_i-1}$ and
\[
    \rep_{i,c}(nM+e)\in d_i0^*\rep_U(C_{i,e}-c).
\]
Therefore, there exists $N$ and $r$ such that
\[
    \{\rep_{i,c}(nM+e):n\ge N\}
    =d_i 0^r(0^{Mp})^*\rep_U(C_{i,e}-c),
\]
which shows that this language is regular.

Second, suppose that $C_{i,e}<c$. In this case, for large $n$, the next digit of the word $\rep_{i,c}(nM+e)$ is $t_{i,\ell_i-1}-1$ and,  using the notation~\eqref{eq:m_ij}, we obtain that
\[
   \rep_{i,c}(nM+e)= d'_i\, \rep_{\sigma(i),c-C_{i,e}}(nM+e-m_{i,1}).
\]
As a result, there exists $N$ such that
\[
    \{\rep_{i,c}(nM+e):n\ge N\}
    =d'_i L_{\sigma(i),c-C_{i,e}}
    \cap A^r(A^{Mp})^*,
\]
where $r=(nM+e)p-i$.
By the induction hypothesis if $s\ge 2$, and by \cref{thm:i-infini} for the base case $s=1$, the equivalence between \ref{item:i-vers-infini-a} and \ref{item:i-vers-infini-b} holds for $\sigma(i)$. Since the language $L_{\sigma(i)}$ is regular by hypothesis, we get  that the language $L_{\sigma(i),c-C_{i,e}}$ is regular. This allows us to conclude that the language 
$\{\rep_{i,c}(nM+e):n\ge N\}$ is regular in this case as well.
\end{proof}

\begin{remark}
\label{rem:i-vers-infini-effective}
    As in \cref{prop:i-infini-effective}, we argue that the condition \ref{item:i-vers-infini-c} of \cref{thm:i-vers-infini} can be used effectively to decide the regularity of a given $L_i$, assuming the eigenvalues of $U$ are known. Since the sequence $\Delta_i$ satisfies the same recurrence relations as $U$, its eigenvalues form a subset of those of $U$ and can be computed effectively. It then suffices to check whether these eigenvalues are all zero or roots of unity, which is equivalent to $\Delta_i$ being ultimately periodic.
\end{remark}

It should be noted that when $L_{\sigma(i)}$ (or a further successor) is not regular, $L_i$ may or may not be regular. Let us illustrate this comment with two examples.

\begin{example}
\label{ex:SuccesseurNonReg}
On the one hand, one can consider the system $U$ given by the relations $U_{2n}=3U_{2n-1}+1$ and $U_{2n-1}=2U_{2n-2}+2U_{2n-3}-U_{2n-4}-1$ and the initial conditions $(U_0,\ldots,U_5)=(1,2,7,16,49,122)$. This sequence satisfies the linear recurrence relation $U_{n+6}=9U_{n+4}-11U_{n+2}+3U_n$ and its associated alternate base is $(3,\frac{4+\sqrt{13}}{3})$. We have $\mathbf{d}_0=30^\omega$ and $\mathbf{d}_1=21^\omega$. Thus, $\sigma(0)=1$. The corresponding graph $G$ is depicted in \cref{fig:ex-L_i-L_sigma(i)}. Recall that in \cref{sec:i-infini}, we expect the period $m_0$ of the infinite expansion to be a multiple of $p$. So, with $i=1$ and $p=2$ here, we have $q_0=1$ and $m_0=2$. We find that $(\Delta_{1,1,2})_n$ is ultimately equal to $-1$. From \cref{lem:Delta-q}, we then find that $(\Delta_{1,1,2k})_n$ is ultimately equal to $-k$ for all $k\ge 1$. Therefore, the criterion \ref{item:i-infini-c} of \cref{thm:i-infini} tells us that the language $L_1$ is not regular. However, the language $L_0$ is given by $ 30(00)^*$ and thus is regular.

\smallskip 
On the other hand, if the system $U$ is defined by the relations $U_{2n}=3U_{2n-1}$ and $U_{2n-1}=2U_{2n-2}+2U_{2n-3}-U_{2n-4}-1$ and the initial conditions $(1,2,6,14,42,105)$, it satisfies the same recurrence relation, is associated with the same alternate base, and we again find that $(\Delta_{1,1,2})_n$ is ultimately equal to $-1$. Thus, $L_1$ is again not regular. However, this time we have $L_0=2\cdot L_1$, thus this language is not regular either.
    \begin{figure}[htb]
    \centering
    \begin{tikzpicture}
    \tikzstyle{every node}=[shape=circle, fill=none, draw=black,minimum size=15pt, inner sep=2pt]
    \node(0) at (0,0) {$0$};
    \node(1) at (2,0) {$1$} ;
    \tikzstyle{every path}=[color=black, line width=0.5 pt]
    \tikzstyle{every node}=[shape=circle]
    \draw [-Latex] (0) to [] node [] {} (1) ;
    \end{tikzpicture}
    \caption{The graph $G$ associated with the alternate base $B=(3,\frac{4+\sqrt{13}}{3})$.}
    \label{fig:ex-L_i-L_sigma(i)}
    \end{figure}
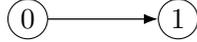
\end{example}

\section{Vertices in a cycle}
\label{sec:i-cycle}

In this section, we examine the regularity of $L_i$ when $i$ is part of a cycle in the graph $G$. In this case, there exists some $r$ such that $\sigma^r(i)=i$. Consequently, $\mathbf{d}_i,\ldots,\mathbf{d}_{\sigma^{r-1}(i)}$ are all finite and $\mathbf{d}_i^*=(d'_i\cdots d'_{\sigma^{r-1}(i)})^\omega$. We know that, up to taking large enough $n$, the words $\rep_U(U_{np-i}-1)$ share prefixes of arbitrary length with one of the $\mathbf{w}_{i,j}$'s. However, these choices for all vertices of the cycle are not independent of one another, which brings additional information on top of \cref{prop:Pref-max=Pref-di}. This observation will be made clear shortly as it will be one of the main arguments in what follows.

The following result gives a necessary condition for the regularity of all languages "in the cycle" that will allow us to focus on ultimately periodic sequences.

\begin{proposition}
\label{prop:cycle-per}
Let $i\in\{0,\ldots,p-1\}$ be such that there exists $r\ge 1$ such that $\sigma^r(i)=i$. If the languages $L_i,L_{\sigma(i)},\ldots, L_{\sigma^{r-1}(i)}$ are all regular, then the sequences $\Delta_i,\Delta_{\sigma(i)},\ldots, \Delta_{\sigma^{r-1}(i)}$ are all ultimately periodic. 
\end{proposition}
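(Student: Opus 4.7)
The plan is to adapt the implication \ref{item:i-vers-infini-a}$\Rightarrow$\ref{item:i-vers-infini-c} in the proof of \cref{thm:i-vers-infini}, running it simultaneously for every vertex $i_h := \sigma^h(i)$ of the cycle (indices taken modulo $r$, so that $i_r = i_0$). First I would use the regularity of each $L_{i_h}$ together with \cref{prop:RegIffMaxReg}, \cref{lem:xy*z} and \cref{lem:SplitModp} to decompose
\[
    L_{i_h} = F_h \cup \bigcup_{e=0}^{M-1} x_{h,e}\, y_{h,e}^*\, z_{h,e},
\]
with $F_h$ finite and $|y_{h,e}| = Mp$ for a common stride $M \geq 1$, which I would additionally arrange to be a multiple of $k_{i,r}/p = (\ell_{i_0} + \cdots + \ell_{i_{r-1}})/p$ (note that $k_{i,r}$ is itself a multiple of $p$ since the cycle closes). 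Then for each $h$ and each sufficiently large $n$, there exist $e = e_h(n)$ and $t = t_h(n) \geq 1$ with $\rep_{i_h, 1}(n) = x_{h,e}\, y_{h,e}^t\, z_{h,e}$ and $\rep_{i_h, 1}(n - M) = x_{h,e}\, y_{h,e}^{t-1}\, z_{h,e}$, whence
\[
    U_{np - i_h} - U_{(n - M)p - i_h} = \val_U\bigl(x_{h,e}\, y_{h,e}\, 0^\lambda\bigr) - \val_U\bigl(x_{h,e}\, 0^\lambda\bigr),
\]
with $\lambda = (n - M)p - i_h - |x_{h,e}|$.

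Next, I would invoke \cref{prop:Pref-max=Pref-di} at length $|x_{h,e}| + Mp$ to pin down the digits of $x_{h,e}\, y_{h,e}$: since in the cycle case $\mathbf{d}_{i_h}^* = (d'_{i_h}\, d'_{i_{h+1}} \cdots d'_{i_{h+r-1}})^\omega$ is purely periodic, these digits must agree with the corresponding prefix of some $\mathbf{w}_{i_h, j} = d'_{i_h}\, d'_{i_{h+1}} \cdots d'_{i_{h+j-1}}\, d_{i_{h+j}}\, 0^\omega$, where the successor indices are taken cyclically. Inserting these explicit digits into the previous identity and performing the same rearrangement as in \cref{thm:i-vers-infini} (after subtracting $\sum_{\ell=1}^{\ell_{i_h}} t_{i_h, \ell-1}\bigl(U_{np - i_h - \ell} - U_{(n-M)p - i_h - \ell}\bigr)$ from both sides), one obtains, with the abbreviation $\delta_h(n) := (\Delta_{i_h})_n - (\Delta_{i_h})_{n-M}$, an identity of the form
\[
    \delta_h(n) = -\sum_{h' = 1}^{j} \delta_{(h + h') \bmod r}\bigl(n - m_{h, h'}\bigr).
\]
Crucially, no residual $(\Delta_{\sigma^s(i), q, Mp})_\cdot$-style boundary term appears here, because the cycle case provides no infinite terminal expansion: every $\mathbf{w}_{i_h, j}$ ends in $d_{i_{h+j}}\, 0^\omega$, whose $U$-value contribution cancels identically exactly as in the $j < s$ sub-case of the proof of \cref{thm:i-vers-infini}.

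Finally, assembling these identities across $h = 0, \ldots, r - 1$ and using that the $y_{h,e}$'s, being blocks of the periodic word $\mathbf{d}_{i_h}^*$ of length a multiple of $k_{i,r}$, are themselves cyclic powers of the period word $d'_{i_0} \cdots d'_{i_{r-1}}$, one obtains a closed self-referential linear system in the sequences $\delta_0, \ldots, \delta_{r-1}$ whose only consistent solution forces $\delta_h \equiv 0$ for $n$ large, yielding the desired ultimate periodicity of each $\Delta_{i_h}$ with period $M$. The main technical obstacle is the derivation of the identity in the middle step: the index $j = j_h(n)$ coming from \cref{prop:Pref-max=Pref-di} can grow unboundedly in $n$ and will in general wrap around the cycle multiple times, so one must carry out the rearrangement carefully across these wrap-arounds and verify that the choice of $M$ as a multiple of $k_{i,r}/p$ indeed makes all would-be boundary contributions cancel exactly and makes the self-referential system close into one that collapses to $\delta_h \equiv 0$.
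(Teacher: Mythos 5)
There is a genuine gap, and it is exactly the point you defer to at the end as ``the main technical obstacle'': the wrap-around case is not an edge case to be patched but the generic case for a vertex in a cycle, and your machinery does not apply to it. \cref{prop:Pref-max=Pref-di} only gives agreement of the length-$L$ prefix of $\rep_{i,1}(n)$ with \emph{some} $\mathbf{w}_{i,j}$ with $j\le L$; when the relevant $j=j_h(n)$ grows with $n$, the limit word $x_{0,e}y_{0,e}^\omega$ of the slender decomposition is not any single $\mathbf{w}_{i,j}$ but the purely periodic word $\mathbf{d}_i^*$ itself. In that situation $y_{0,e}$ is a nonzero block of $\mathbf{d}_i^*$ (not $0^{Mp}$), the trailing $d_{i_{h+j}}0^\omega$ of $\mathbf{w}_{i_h,j}$ never appears in $\rep_U(U_{np-i}-1)$, and the ``rearrangement as in \cref{thm:i-vers-infini}'' does not produce your identity: the number of terms in the putative sum grows with $n$, so you do not get a closed finite linear system in $\delta_0,\ldots,\delta_{r-1}$ that could ``collapse to zero''. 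Your claim that no boundary term appears because the cycle has no infinite terminal expansion is precisely backwards here: the quasi-greedy word $\mathbf{d}_i^*$ plays the role of the infinite terminal expansion, and it is the whole difficulty.

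The paper's proof splits according to whether $x_{0,e}y_{0,e}^\omega$ equals some $\mathbf{w}_{i,j_0}$ or equals $\mathbf{d}_i^*$. The first case matches your plan (with the additional use of \cref{lem:Suffixes} to bound the successor indices $j_h\le j_0-h$, followed by a descending induction on $h$ — not a self-referential system). The second case is handled by a different argument: one builds an auxiliary increasing sequence $(h_q)$ recording at which successor vertices the maximal words switch from $\mathbf{w}$-type to $\mathbf{d}^*$-type behavior, writes explicit factorizations of the words $\rep_U(U_{np-i-k_{i,h_q}}-1)$, and after massive cancellation shows directly that $(\Delta_i)_n$ equals an expression depending only on the residue $e=n\bmod M$ (using that the two prefixes $w,w'$ of the same infinite word $\mathbf{d}^*_{\sigma^{h_Q}(i)}$ are comparable). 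None of these ingredients is present in your proposal, so as written it does not establish the proposition.
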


\begin{proof}
Suppose that the languages $L_i,L_{\sigma(i)},\ldots, L_{\sigma^{r-1}(i)}$ are all regular. Then by ~\cref{lem:xy*z} combined with arithmetic considerations, they can be decomposed in disjoint unions as follows. For every $h\in\{0,\ldots,r-1\}$, we have
\[
    L_{\sigma^h(i)}
    = F_h \cup \bigcup_{e=0}^{M-1} x_{h,e}y_{h,e}^*z_{h,e}
\]
where $F_h$ is a finite language, $M\ge 1$ with $|y_{h,e}|=Mp$, $|x_{h,e}z_{h,e}|\equiv -\sigma^h(i)\pmod p$, $|x_{h,0}z_{h,0}|=tMp-\sigma^h(i)$ for some $t$, and $|x_{h,e+1}z_{h,e+1}|=|x_{h,e}z_{h,e}|+p$ for each $e$. Without loss of generality, we can ask that $Mp$ is a multiple of $k_{i,r}$, which is the sum of the lengths of the finite expansions $\mathbf{d}_{\sigma^h(i)}$ for $h\in\{0,\ldots,r-1\}$. We let $M'$ be this multiple, so that we have
\[
    Mp=M'k_{i,r}=k_{i,M'r}.
\]

Given the symmetry of the situation, in order to show the result it is enough to prove that $\Delta_i$ is ultimately periodic. To this end, we will show that 
\begin{equation}
\label{eq:Delta-i-periodique}
    (\Delta_i)_n=(\Delta_i)_{n-M}
\end{equation}
for large $n$.

Consider $e\in\{0,\ldots,M-1\}$ and $n\ge tM$ (or larger if needed) such that $n\equiv e\pmod M$. Thus, we have $\rep_U(U_{np-i}-1)\in x_{0,e} y_{0,e}^* z_{0,e}$.
By \cref{prop:Pref-max=Pref-di}, we know that for all $L$ and all large enough $n$, the prefix of length $L$ of $\rep_U(U_{np-i}-1)$ coincides with the prefix of length $L$ of some $\mathbf{w}_{i,j}$. This implies that either $x_{0,e} y_{0,e}^\omega=\mathbf{w}_{i,j_0}$ for some $j_0\ge 0$ or $x_{0,e} y_{0,e}^\omega=\mathbf{d}^*_i$. We consider these two cases separately.

\medskip
First, we suppose that $x_{0,e} y_{0,e}^\omega=\mathbf{w}_{i,j_0}$ for some $j_0\ge 0$. Then $y_{0,e}=0^{Mp}$. Now for $h\in\{1,\ldots,j_0\}$ and $e'=(e-m_{i,h})\bmod M$, we also have that $x_{h,e'}y_{h,e'}^\omega$ is either $\mathbf{w}_{\sigma^h(i),j_h}$ for some $j_h\ge 0$ or $\mathbf{d}_{\sigma^h(i)}^*$. However, in our case only the former is possible, with $j_h\leq j_0-h$. Indeed, otherwise the suffix of length $np-i-k_{i,h}$ of $\rep_U(U_{np-i}-1)$ would be lexicographically greater than $\rep_U(U_{np-i-k_{i,h}}-1)$ for large $n$, in contradiction with \cref{lem:Suffixes}. Similarly to \eqref{eq:difference-Delta} and using that $m_{i,h+j}=m_{i,h}+m_{\sigma^h(i),j}$, we obtain
\begin{align}
        \label{eq:difference-Delta-cycle}
    (\Delta_{\sigma^h(i)})_{n-m_{i,h}}
    &-(\Delta_{\sigma^h(i)})_{n-m_{i,h}-M} \\ \nonumber
    & =-\sum_{j=1}^{j_h} \left( (\Delta_{\sigma^{h+j}(i)})_{n-m_{i,h+j}} - (\Delta_{\sigma^{h+j}(i)})_{n-m_{i,h+j}-M} \right).
\end{align}
We can now prove by descending induction on $h\in\{0,\ldots,j_0\}$ that 
\[
    (\Delta_{\sigma^h(i)})_{n-m_{i,h}}-(\Delta_{\sigma^h(i)})_{n-m_{i,h}-M}
\]
is ultimately equal to $0$. If $h=j_0$ then $j_h=0$ and the right-hand side of~\eqref{eq:difference-Delta-cycle} is trivially zero. Now, let $h\in\{0,\ldots,j_0-1\}$ and suppose that the claim holds for $h+1,\ldots,j_0$. Then each term of the sum of the right-hand side of~\eqref{eq:difference-Delta-cycle} is ultimately zero by induction hypothesis since $h+j\in\{h+1\ldots, j_0\}$ as we have $h+j_h\le j_0$.

\medskip
Second, we suppose that $x_{0,e} y_{0,e}^\omega=\mathbf{d}^*_i$. Let us argue that, in this case, for every $h\in\{1,\ldots, M'r-1\}$, the word $\rep_U(U_{np-i-k_{i,h}}-1)$ has long common prefixes either with some $\mathbf{w}_{\sigma^h(i),j_h}$ with $j_h<M'r-h$ or with $\mathbf{d}^*_{\sigma^h(i)}$. Indeed, we know that the word $\rep_U(U_{np-i-Mp}-1)$ also shares long prefixes with $x_{0,e} y_{0,e}^\omega$, which we are assuming is $\mathbf{d}^*_i$. Since $k_{i,M'r}=Mp$, we see that $\rep_U(U_{np-i-k_{i,h}}-1)$ cannot share long common prefixes with some $\mathbf{w}_{\sigma^h(i),j_h}$ with $j_h\ge M'r-h$ for otherwise, its suffix of length $np-i-k_{i,M'r}$ would be lexicographically greater than $\rep_U(U_{np-i-Mp}-1)$, in contradiction with \cref{lem:Suffixes}.

We define a finite sequence $(h_q)_{1\le q\le Q}$ as follows. Set $h_1=1$. Then, if $h_q$ is defined and is less than $M'r$, and if there is some $j_{h_q}<M'r-h_q$ such that $\rep_U(U_{np-i-k_{i,h_q}}-1)$ has long common prefixes with $\mathbf{w}_{\sigma^{h_q}(i),j_{h_q}}$, then we set $h_{q+1}=h_q+j_{h_q}+1$. Since this sequence is increasing and bounded above by $M'r$, it must end with some $h_Q$. From the previous paragraph, the word $\rep_U(U_{np-i-k_{i,h_Q}}-1)$ has long common prefixes with $\mathbf{d}^*_{\sigma^{h_Q}(i)}$.

We thus have the following factorizations: first, 
\[
    \rep_U(U_{np-i}-1)= d'_id'_{\sigma(i)}\cdots d'_{\sigma^{h_Q-1}(i)}wz_{0,e}
\]
where $w$ is a prefix of $\mathbf{d}^*_{\sigma^{h_Q}(i)}$; then, for every $q\in\{1,\ldots,Q-1\}$, there exists $e_q\in\{0,\ldots,M-1\}$ such that
\[
    \rep_U(U_{np-i-k_{i,h_q}}-1)\in d'_{\sigma^{h_q}(i)}d'_{\sigma^{h_q+1}(i)}\cdots d'_{\sigma^{h_{q+1}-2}(i)}d_{\sigma^{h_{q+1}-1}(i)}0^*z_{\sigma^{h_q}(i),e_q}
\] 
and finally, there exists $e_Q\in\{0,\ldots,M-1\}$ such that
\[
    \rep_U(U_{np-i-k_{i,h_Q}}-1)= w'z_{\sigma^{h_Q}(i),e_Q}
\]
where $w'$ is a prefix of $\mathbf{d}^*_{\sigma^{h_Q}(i)}$.

Using these expansions, we can write
\begin{align*}
    0&=U_{np-i}-1-\sum_{h=0}^{h_Q-1}\sum_{\ell=1}^{\ell_{\sigma^h(i)}} t_{\sigma^h(i),\ell-1} U_{np-i-k_{i,h}-\ell}
    +\sum_{h=1}^{h_Q} U_{np-i-k_{i,h}} 
    - \val_U(wz_{0,e}) \\
    &-\sum_{q=1}^{Q-1}\left(U_{np-i-k_{i,h_q}}-1 
    - \sum_{h=h_q}^{h_{q+1}-1}\sum_{\ell=1}^{\ell_{\sigma^h(i)}} t_{\sigma^h(i),\ell-1} U_{np-i-k_{i,h}-\ell} \right. \\
    & \qquad \qquad \left. +\sum_{h=h_{q}+1}^{h_{q+1}-1} U_{np-i-k_{i,h}} 
    - \val_U\big(z_{\sigma^{h_q}(i),e_q}\big) \right) \\
    & - \left(U_{np-i-k_{i,h_Q}} -1 - \val_U\big(w'z_{\sigma^{h_Q}(i),e_Q}\big) \right).
\end{align*}
Multiple cancellations yield 
\[
    (\Delta_i)_n= \val_U(wz_{0,e}) -\val_U\big(w'z_{\sigma^{h_Q}(i),e_Q}\big) 
                - \sum_{q=1}^{Q-1} \big(\val_U\big(z_{\sigma^{h_q}(i),e_q}\big)+1\big).
\]
In the right-hand side of this equality, only $w$ and $w'$ depend on $n$, since the sequence $(h_q)_{1\le q\le Q}$ and the suffixes $z_{0,e}$ and $z_{\sigma^{h_q}(i),e_q}$ only depend on $e$.
Since $w$ and $w'$ are both prefixes of the same infinite word, one must be a prefix of the other. Since moreover 
\[
    |wz_{0,e}|=|w'z_{\sigma^{h_Q}(i),e_Q}|=np-i-k_{i,h_Q},
\]
we get that
\[
    \val_U(wz_{0,e})-\val_U\big(w'z_{\sigma^{h_Q}(i),e_Q}\big)
    =\begin{cases}
        \val_U(z_{0,e})-\val_U\big(vz_{\sigma^{h_Q}(i),e_Q}\big), & \text{if } w'=wv; \\
        \val_U(vz_{0,e})-\val_U\big(z_{\sigma^{h_Q}(i),e_Q}\big), & \text{if } w=w'v.
    \end{cases}
\]
Because $|v|$ is either $|z_{0,e}|-|z_{\sigma^{h_Q}(i),e_Q}|$ or $|z_{\sigma^{h_Q}(i),e_Q}|-|z_{0,e}|$, this value only depends on $e$.
This proves that~\eqref{eq:Delta-i-periodique} hold for all large $n$, as desired.
\end{proof}

In order to state the main result of this section, we need a new definition. This definition and the following lemma are stated in the more general case of a vertex that is either in a cycle or leading to a cycle in the graph $G$, as they will be used in both the current and the next sections.

\begin{definition}
\label{def:sum-Delta}
    Let $i\in\{0,\ldots,p-1\}$ be such that there exist $s\ge 0$ and $r\ge 1$ such that $\sigma^{s+r}(i)=\sigma^s(i)$. For all integers $j\ge 0$ and all integers $n$ such that $np-i\ge k_{i,j}$, we let
    \[
        (\Delta_i^{(j)})_n = \sum_{h=0}^{j-1} (\Delta_{\sigma^h(i)})_{n-m_{i,h}}.
    \]
\end{definition}

That is, $(\Delta_i^{(j)})_n$ is the cumulative sum of $j$ values of the sequences $\Delta_{\sigma^h(i)}$, for $h\in\{0,\ldots,j\}$. These values are taken along relevant positions with respect to the execution of the greedy algorithm when representing $U_{np-i}-1$. The following result is what motivates the previous definition. 

\begin{lemma}
\label{lem:tech}
    Let $i\in\{0,\ldots,p-1\}$ be such that there exist $s\ge 0$ and $r\ge 1$ such that $\sigma^{s+r}(i)=\sigma^s(i)$. For all integers $c\ge 1$ and $j\ge 0$, there exists $N$ such that for all $n\ge N$,
    
    \smallskip
    \begin{itemize}\setlength\itemsep{0.7em}
    \item if $(\Delta_i^{(h)})_n<c$ for all $h\in\{1,\ldots,j\}$, then
    \[
        \rep_{i,c}(n) = d'_id'_{\sigma(i)}\cdots d'_{\sigma^{j-1}(i)}
        \rep_{\sigma^j(i),c_n}(n-m_{i,j}),
    \]
    where $c_n=c-(\Delta_i^{(j)})_n$;
    \item if $(\Delta_i^{(h)})_n<c$ for all $h\in\{1,\ldots,j-1\}$ and $(\Delta_i^{(j)})_n\ge c$, then
    \[
        \rep_{i,c}(n) 
        \in d'_id'_{\sigma(i)}\cdots d'_{\sigma^{j-2}(i)} 
        d_{\sigma^{j-1}(i)} 0^* \rep_U\big((\Delta_i^{(j)})_n-c\big).
    \]
    \end{itemize}
\end{lemma}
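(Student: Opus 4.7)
The argument will proceed by induction on $j\ge 0$. The case $j=0$ is trivial: the first bullet reduces to the tautology $\rep_{i,c}(n)=\rep_{i,c}(n)$ since $m_{i,0}=0$ and $(\Delta_i^{(0)})_n=0$ (empty sum), while the second bullet is vacuous because $(\Delta_i^{(0)})_n=0<c$. The substance of the argument is concentrated in the case $j=1$; the induction step will then reduce formally to this base case.

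For the base case $j=1$, the plan is to directly track the greedy algorithm computing $\rep_U(U_{np-i}-c)$. Since every $\mathbf{w}_{i,k}$ begins with the common prefix $t_{i,0}\cdots t_{i,\ell_i-2}$, \cref{prop:Pref-max=Pref-di} applied with $L=\ell_i-1$ shows that for $n$ large the first $\ell_i-1$ digits of $\rep_{i,c}(n)$ are exactly $t_{i,0},\ldots,t_{i,\ell_i-2}$. The next greedy digit then equals
\[
\left\lfloor\frac{U_{np-i}-c-\sum_{\ell=1}^{\ell_i-1}t_{i,\ell-1}U_{np-i-\ell}}{U_{np-i-\ell_i}}\right\rfloor
=t_{i,\ell_i-1}+\left\lfloor\frac{(\Delta_i)_n-c}{U_{np-i-\ell_i}}\right\rfloor.
\]
A direct limit computation using $U_{np-i-\ell}/U_{np-i-\ell_i}\to\beta_{i+\ell}\cdots\beta_{i+\ell_i-1}$ and the identity $\val_{B^{(i)}}(\mathbf{d}_i)=1$ yields $(\Delta_i)_n/U_{np-i-\ell_i}\to 0$. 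Hence for $n$ large the floor equals $0$ when $(\Delta_i)_n\ge c$ and $-1$ when $(\Delta_i)_n<c$. The first subcase produces the suffix $d_i0^*\rep_U((\Delta_i)_n-c)$, i.e.\ the second bullet. The second subcase replaces $t_{i,\ell_i-1}$ by $t_{i,\ell_i-1}-1$ and leaves the remainder $U_{(n-m_{i,1})p-\sigma(i)}-c_n$ with $c_n=c-(\Delta_i)_n\ge 1$, which is precisely $\rep_{\sigma(i),c_n}(n-m_{i,1})$, giving the first bullet.

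For the induction step, suppose the statement holds at level $j$ and assume $(\Delta_i^{(h)})_n<c$ for all $h\in\{1,\ldots,j\}$. The induction hypothesis yields
\[
\rep_{i,c}(n)=d'_i\cdots d'_{\sigma^{j-1}(i)}\rep_{\sigma^j(i),c_n}(n-m_{i,j})
\]
with $c_n=c-(\Delta_i^{(j)})_n\ge 1$. Applying the base case at vertex $\sigma^j(i)$ with constant $c_n$, and using the identities $m_{i,j+1}=m_{i,j}+m_{\sigma^j(i),1}$ and $(\Delta_i^{(j+1)})_n=(\Delta_i^{(j)})_n+(\Delta_{\sigma^j(i)})_{n-m_{i,j}}$, the condition $(\Delta_{\sigma^j(i)})_{n-m_{i,j}}<c_n$ becomes $(\Delta_i^{(j+1)})_n<c$; the two subcases of the base case then translate directly into the two bullets at level $j+1$.

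The principal subtlety will be that in the induction step, the base case is applied with the varying constant $c_n$, so one must verify that the required threshold $N$ can be chosen uniformly in $n$. This is settled by the same growth-rate mechanism exploited in the base case: each $(\Delta_i^{(h)})_n$ remains $o(U_{np-i})$ thanks to the cancellation coming from $\val_{B^{(h')}}(\mathbf{d}_{h'})=1$ along the path, so $c_n$ grows strictly slower than the $U$-terms governing the greedy digit estimates, and a uniform threshold exists.
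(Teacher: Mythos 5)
Your base case $j=1$ is correct and is essentially the computation the paper itself performs: the first $\ell_i-1$ digits are pinned down by \cref{prop:Pref-max=Pref-di}, the digit in position $\ell_i-1$ equals $t_{i,\ell_i-1}+\lfloor((\Delta_i)_n-c)/U_{np-i-\ell_i}\rfloor$, the floor lies in $\{-1,0\}$ for large $n$, and its value is decided by the sign of $(\Delta_i)_n-c$. The gap is in the induction step. You ``apply the base case at vertex $\sigma^j(i)$ with constant $c_n$'', but the base case is a statement about a \emph{fixed} constant whose threshold $N$ depends on that constant, while $c_n=c-(\Delta_i^{(j)})_n$ varies with $n$ and is in general unbounded. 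You flag this and propose to restore uniformity via a growth estimate of the form $c_n=o(U_{\cdot})$; that estimate is true (after upgrading $o(U_{np-i})$ to $o(U_{np-i-k_{i,j+1}})$, using that these terms differ by bounded ratios), and it does handle the final floor computation. But it does not address the other ingredient of your own base case: the identification of the first $\ell_{\sigma^j(i)}-1$ digits of $\rep_{\sigma^j(i),c_n}(n-m_{i,j})$, which you obtained from \cref{prop:Pref-max=Pref-di} --- itself a fixed-$c$ statement that cannot be invoked with a constant depending on $n$. Closing this would require re-deriving those digits by floor estimates uniform in $c_n$, i.e.\ essentially proving a uniform-in-$c$ variant of the relevant case of \cref{prop:Pref-max=Pref-di}, which is more work than your sketch acknowledges.

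The paper sidesteps the issue entirely: in the induction step it applies \cref{prop:Pref-max=Pref-di} to the \emph{outer} word $\rep_{i,c}(n)$ with the original fixed $c$ and length $k_{i,j}$ (in the paper's indexing, where the step goes from $j-1$ to $j$). Combined with the shape provided by the induction hypothesis, this forces the prefix of length $k_{i,j}$ of $\rep_{i,c}(n)$ to agree with that of either $\mathbf{w}_{i,j-1}$ or $\mathbf{w}_{i,j}$, so the disputed digit is already known to be one of two values before any computation involving $c_n$; the floor formula is then used only to decide which of the two occurs, according to the sign of $(\Delta_i^{(j)})_n-c$. You should restructure your induction step along these lines rather than re-running the full greedy analysis on the inner word with a moving constant.
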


\begin{proof}
    We proceed by induction on $j$. The result is trivial for $j=0$. Let thus $j\ge 1$ be such that the result holds for $j-1$, and let $c\ge 1$. By induction hypothesis, there exists $N_1$ such that for all $n\ge N_1$ such that 
    \begin{equation}
        \label{eq:hypo-j-1}
        (\Delta_i^{(h)})_n<c\quad \text{ for }h\in\{1,\ldots,j-1\},
    \end{equation}
    we have
    \[
        \rep_{i,c}(n) 
        = d'_id'_{\sigma(i)}d'_{\sigma^{j-2}(i)} \rep_{\sigma^{j-1}(i),b_n}(n-m_{i,j-1})
    \]
    where $b_n=c-(\Delta_i^{(j-1)})_n$.
    Therefore, and by using \cref{prop:Pref-max=Pref-di}, there exists $N_2\ge N_1$ such that for all $n\ge N_2$ such that the inequalities \eqref{eq:hypo-j-1} hold, the prefix of length $k_{i,j}$ of $\rep_{i,c}(n)$   coincides with that of either $\mathbf{w}_{i,j-1}$ or $\mathbf{w}_{i,j}$.
    Thus, for such $n$, the prefix of length $\ell_{\sigma^{j-1}(i)}$ of $\rep_{\sigma^{j-1}(i),b_n}(n-m_{i,j-1})$ is either $d'_{\sigma^{j-1}(i)}$ or $d_{\sigma^{j-1}(i)}$. The last digit of this prefix is given by
    \[
        \left\lfloor 
        \frac{U_{np-i-k_{i,j-1}}-c+(\Delta_i^{(j-1)})_n - \sum_{\ell=1}^{\ell_{\sigma^{j-1}(i)}-1} t_{\sigma^{j-1}(i),\ell-1} U_{np-i-k_{i,j-1}-\ell} }{U_{np-i-k_{i,j}}}
        \right\rfloor. 
    \]
    Using \cref{def:Delta_i,def:sum-Delta}, this digit can be rewritten as
    \begin{align*}
        &\left\lfloor 
        \frac{(\Delta_i^{(j-1)})_n + (\Delta_{\sigma^{j-1}(i)})_{n-m_{i,j-1}}-c}{U_{np-i-k_{i,j}}}
        \right\rfloor + t_{\sigma^{j-1}(i),\ell_{\sigma^{j-1}(i)}-1} \\
        & \quad = \left\lfloor 
        \frac{(\Delta_i^{(j)})_n-c}{U_{np-i-k_{i,j}}} 
        \right\rfloor + t_{\sigma^{j-1}(i),\ell_{\sigma^{j-1}(i)}-1}.
    \end{align*}
    If $(\Delta_i^{(j)})_n\ge c$, then this digit is $t_{\sigma^{j-1}(i),\ell_{\sigma^{j-1}(i)}-1}$ and
    \[
       \rep_{i,c}(n)
        \in d'_id'_{\sigma(i)}\cdots d'_{\sigma^{j-2}(i)} 
        d_{\sigma^{j-1}(i)} 0^* \rep_U\big((\Delta_i^{(j)})_n-c\big)
    \]
    as expected. Otherwise, if $(\Delta_i^{(j)})_n< c$, then this digit is $t_{\sigma^{j-1}(i),\ell_{\sigma^{j-1}(i)}-1}-1$ and 
    \[
       \rep_{i,c}(n)
       = d'_id'_{\sigma(i)}\cdots d'_{\sigma^{j-1}(i)}
       \rep_{\sigma^j(i),c_n}(n-m_{i,j})
    \]
    where $c_n=c-(\Delta_i^{(j)})_n$,
    as expected.
\end{proof}

We will also need the following lemma on circular sums.

\begin{lemma}
\label{lem:SequenceRotation}
    Consider a finite sequence $\delta_0,\ldots,\delta_{M-1}$ such that $\sum_{m=0}^{M-1} \delta_m<0$, and set $\delta_m=\delta_{m\bmod M}$ for $m\ge M$. Then there exists $j\in\{0,\ldots,M-1\}$ such that for every $t\in\{0,\ldots,M-1\}$, we have $\sum_{m=j}^{j+t} \delta_m<0$.
\end{lemma}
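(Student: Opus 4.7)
The plan is to introduce the partial sums $S_k := \sum_{m=0}^{k-1} \delta_m$ for $k \in \{0, 1, \ldots, M\}$, so that $S_0 = 0$ and $S_M < 0$ by hypothesis, and to choose $j$ to be the \emph{largest} index in $\{0, 1, \ldots, M-1\}$ at which the maximum of $S_0, S_1, \ldots, S_{M-1}$ is attained. Note that by construction $S_j \geq S_0 = 0 > S_M$.

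Given $t \in \{0, 1, \ldots, M-1\}$, I would then rewrite $\sum_{m=j}^{j+t} \delta_m$ in terms of the $S_k$'s by distinguishing two cases according to whether the sum ``wraps around'' the period $M$ or not. If $j+t+1 \leq M$, the sum equals $S_{j+t+1} - S_j$, and this is strictly negative because either $j+t+1 \in \{j+1, \ldots, M-1\}$, in which case the choice of $j$ as the \emph{largest} maximizer forces $S_{j+t+1} < S_j$ strictly, or $j+t+1 = M$, in which case $S_M < 0 \leq S_j$ directly gives the conclusion. If instead $j+t+1 > M$, then exploiting the periodic extension $\delta_m = \delta_{m \bmod M}$, the sum equals $S_M + S_{j+t+1-M} - S_j$ with $j+t+1-M \in \{1, \ldots, j\}$; weak maximality yields $S_{j+t+1-M} \leq S_j$, so the whole expression is at most $S_M < 0$.

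The only subtle point I expect is the strict-versus-weak dichotomy between the two cases: obtaining strict negativity in the non-wraparound case genuinely requires picking the \emph{largest} maximizer among the $S_k$'s, since otherwise ties would produce a zero partial sum rather than a strictly negative one. In the wraparound case, by contrast, ties among maximizers are harmless because the extra term $S_M$ is already strictly negative.
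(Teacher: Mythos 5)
Your proposal is correct and follows essentially the same route as the paper: both define the partial sums, take $j$ to be the largest maximizer among $S_0,\ldots,S_{M-1}$, and split into the non-wraparound case (where strictness comes from maximality of $j$, or from $S_M<0\le S_j$ when the sum reaches index $M$) and the wraparound case (where the strictly negative total $S_M$ absorbs the weak inequality). Your explicit discussion of the strict-versus-weak dichotomy is a slightly more detailed account of the same argument.
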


\begin{proof}
Let $j\in \{0,\ldots,M-1\}$ be such that $\sum_{m=0}^{j-1} \delta_m$ is maximal, and choose $j$ maximal among all possible such values. If $t\in\{0,\ldots,M-1-j\}$, then 
\[
	\sum_{m=j}^{j+t} \delta_m
	= \sum_{m=0}^{j+t} \delta_m - \sum_{m=0}^{j-1} \delta_m
	<0
\]
by choice of $j$. If $t\in\{M-j,\ldots,M-1\}$, then we get that
\[
	\sum_{m=j}^{j+t} \delta_m  
	=\sum_{m=j}^{M-1} \delta_m + \sum_{m=M}^{j+t} \delta_m
    =\sum_{m=0}^{M-1} \delta_m - \sum_{m=0}^{j-1} \delta_m + \sum_{m=0}^{j+t-M} \delta_m
 	<0,
\]
since the first sum is negative by assumption, and the third is less or equal than the second by choice of $j$. 
\end{proof}

We introduce one last definition.

\begin{definition}
\label{def:constant-Delta}
Let $i\in\{0,\ldots,p-1\}$ be such that there exists $r\ge 1$ such that $\sigma^r(i)=i$ and such that the sequences $\Delta_i,\Delta_{\sigma(i)},\ldots,\Delta_{\sigma^{r-1}(i)}$ are all ultimately periodic with a common period $M$. For all integers $j\ge 0$ and all $e\in\{0,\ldots,M-1\}$, we let $\Delta_{i,e,j}$ denote the ultimate constant value of $(\Delta_{\sigma^j(i)})_{nM+e-m_{i,j}}$ and we let $\Delta^{(j)}_{i,e}$ denote the ultimate constant value of $(\Delta^{(j)}_i)_{nM+e}$, so that we have
\[
    \Delta^{(j)}_{i,e}=\sum_{h=0}^{j-1} \Delta_{i,e,h}.
\]
\end{definition}

We are ready to state the main result of this section.

\begin{theorem}
\label{thm:i-cycle}
Let $i\in\{0,\ldots,p-1\}$ be such that there exists $r\ge 1$ such that $\sigma^r(i)=i$, in which case $\mathbf{d}_i,\ldots,\mathbf{d}_{\sigma^{r-1}(i)}$ are finite and $\mathbf{d}^*_i=(d'_i\cdots d'_{\sigma^{r-1}(i)})^\omega$. We assume that the sequences $\Delta_i,\Delta_{\sigma(i)},\ldots,\Delta_{\sigma^{r-1}(i)}$ are all ultimately periodic with a common period $M$ such that $Mp=M'k_{i,r}$ with $M'\ge 1$. 

\smallskip
The following assertions are equivalent.

\smallskip
\begin{enumerate}[label=(\alph*)] \setlength\itemsep{0.7em}
    \item \label{item:i-cycle-a} The languages $L_i,L_{\sigma(i)}\ldots,L_{\sigma^{r-1}(i)}$ are all regular.
    \item \label{item:i-cycle-b} For all $c\ge 1$, the languages $L_{i,c},L_{\sigma(i),c}\ldots,L_{\sigma^{r-1}(i),c}$ are all regular.
    \item \label{item:i-cycle-c} For all $e\in \{0,\ldots,\frac{k_{i,r}}{p}-1\}$ and $j\in\{0,\ldots,r-1\}$, we have $\Delta^{(M'r)}_{\sigma^j(i),e}\ge 0$.
    \item \label{item:i-cycle-d} For all $e\in \{0,\ldots,\frac{k_{i,r}}{p}-1\}$, we have $\Delta^{(M'r)}_{i,e}\ge 0$.
\end{enumerate}
\end{theorem}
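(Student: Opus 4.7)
The plan is to establish the cycle $(b) \Rightarrow (a) \Rightarrow (c) \Leftrightarrow (d) \Rightarrow (b)$. The implication $(b) \Rightarrow (a)$ is immediate since $L_{\sigma^j(i)} = L_{\sigma^j(i), 1}$, and $(c) \Leftrightarrow (d)$ follows from the symmetry of the theorem's hypotheses under replacing the base point $i$ by any $\sigma^j(i)$ in the cycle: the quantities $r$, $k_{i, r}$, $M$, and $M'$ are all preserved, so applying $(d)$ at each vertex of the cycle yields $(c)$, while the converse is the special case $j = 0$.

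For $(c) \Rightarrow (b)$, I would use \cref{lem:SplitModp} to split each language $L_{\sigma^j(i), c}$ into residue classes modulo $M$ and analyze $\rep_{\sigma^j(i), c}(n)$ via \cref{lem:tech}. If the smallest index $j_0 \ge 1$ with $\Delta^{(j_0)}_{\sigma^j(i), e} \ge c$ exists, the second item of \cref{lem:tech} expresses $\rep_{\sigma^j(i), c}(n)$ as $d'_{\sigma^j(i)} \cdots d_{\sigma^{j + j_0 - 1}(i)}\, 0^{L_n}\, \rep_U(\Delta^{(j_0)}_{\sigma^j(i), e} - c)$, with $L_n$ growing in arithmetic progression by steps of $Mp$; this contributes a regular piece of the form $x\, 0^{L_0}(0^{Mp})^* z$. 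If no such $j_0$ exists, then $(c)$ applied at $\sigma^j(i)$ forces $\Delta^{(M'r)}_{\sigma^j(i), e} = 0$ (otherwise the partial sums would diverge and eventually exceed $c$), and the first item of \cref{lem:tech} with $j = M'r$ produces the recursion $\rep_{\sigma^j(i), c}(n) = y_e \rep_{\sigma^j(i), c}(n - M)$, which unwinds into the regular language $y_e^* \rep_{\sigma^j(i), c}(n^*)$ for a suitable base case $n^*$. Summing over residue classes yields regularity of $L_{\sigma^j(i), c}$.

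The hard part is $(a) \Rightarrow (c)$, which I plan to prove by contradiction: assume $\Delta^{(M'r)}_{\sigma^{j_0}(i), e_0} < 0$ for some $j_0, e_0$. Applying \cref{lem:SequenceRotation} to the cyclic length-$M'r$ sequence of $\Delta$-contributions (whose total is negative), and then using the decomposition $\Delta^{(h_0 M'r + h_1)}_{i', e^*} = h_0 \Delta^{(M'r)}_{i', e^*} + \Delta^{(h_1)}_{i', e^*}$ arising from going once around the cycle, I rename the base to $i' = \sigma^{j_0 + j^*}(i)$ and the residue to a corresponding $e^*$ so that every partial sum $\Delta^{(h)}_{i', e^*}$ with $h \ge 1$ is strictly negative. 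This ensures the hypothesis of the first item of \cref{lem:tech} holds with $j$ arbitrarily large. Combining the regularity of $L_{i'}$ from $(a)$ with \cref{lem:xy*z}, I write $\rep_{i', 1}(n) = x_e y_e^{K(n)} z_e$ with $|y_e| = Mp$; the prefix matching in \cref{prop:Pref-max=Pref-di} forces $x_e y_e^\omega = \mathbf{d}^*_{i'}$, so $y_e = (d'_{i'} \cdots d'_{\sigma^{r-1}(i')})^{M'}$. Applying \cref{lem:tech} with $j = kM'r$ at $n + kM$ then yields $\rep_{i', 1}(n + kM) = y_e^k \rep_{i', c_k}(n)$ with $c_k = 1 - k\Delta^{(M'r)}_{i', e^*}$ strictly increasing in $k$; equating with the regular expression identifies $\rep_{i', c_k}(n)$ as the word obtained from $x_e y_e^{K(n) + k} z_e$ by removing its first $kMp$ characters. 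The subtlest point, and what I expect to be the main obstacle, is to verify by direct computation that for every $k > \lfloor |x_e|/(Mp) \rfloor$ this stripped word equals the suffix of $y_e$ of length $s$ followed by $y_e^{K(n) + Q} z_e$ (where $|x_e| = QMp + s$), and is therefore independent of $k$ — the key being the $Mp$-periodicity of the $y_e$-block. Hence $\rep_{i', c_k}(n)$ is the same word for all such $k$, while its $U$-value $U_{np - i'} - c_k$ depends genuinely on $k$, a contradiction.
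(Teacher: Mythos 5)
Your proposal is correct and, for three of the four implications, follows the paper's proof essentially verbatim: $(b)\Rightarrow(a)$ is trivial, $(c)\Leftrightarrow(d)$ via the shift-invariance identities $\Delta^{(M'r)}_{i,e}=\Delta^{(M'r)}_{\sigma^j(i),e'}$, and $(c)\Rightarrow(b)$ via \cref{lem:SplitModp} and the same two-case analysis through \cref{lem:tech} (the paper steps the recursion by $cM$ rather than $M$, but your version, which first deduces $\Delta^{(M'r)}_{i,e}=0$ and then iterates with the same $c$, is equivalent). The one genuine divergence is the endgame of $(a)\Rightarrow(d)$. Both arguments contrapose, invoke \cref{lem:SequenceRotation} to move to a vertex $i'$ and residue $e^*$ where all partial sums $\Delta^{(t)}_{i',e^*}$ are negative, and apply \cref{lem:tech} with $j=kM'r$ to peel off longer and longer prefixes. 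The paper then concludes by observing that the resulting suffixes $\rep_{i',c_k}(n)$ are $C$ pairwise distinct words of the same length, contradicting the uniform bound on same-length suffixes of slender regular languages coming from \cref{lem:xy*z}. You instead pin the suffix down explicitly: since $\rep_{i',1}(n+kM)=x_ey_e^{K(n)+k}z_e$ with $|y_e|=Mp$, stripping the first $kMp$ letters yields a word independent of $k$ (your computation with $|x_e|=QMp+s$ is correct), so the single word $\rep_{i',c_k}(n)$ would have to carry the two distinct values $U_{np-i'}-c_k$ and $U_{np-i'}-c_{k'}$ --- a more elementary contradiction that bypasses the suffix-counting property of slender languages, at the cost of the extra bookkeeping with $Q$ and $s$. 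Two small caveats, neither fatal: it is \cref{lem:tech} (applicable with arbitrarily large $j$ because all partial sums are negative), rather than \cref{prop:Pref-max=Pref-di} alone, that rules out the alternative $x_ey_e^\omega=\mathbf{w}_{i',j}$ with $y_e=0^{Mp}$; and $y_e$ is in general only a cyclic rotation of $(d'_{i'}\cdots d'_{\sigma^{r-1}(i')})^{M'}$ (by $|x_e|\bmod k_{i',r}$ positions), not that word itself --- but your stripping argument only uses $|y_e|=Mp$ and the $Mp$-periodicity of $x_ey_e^\omega$, so the conclusion stands.
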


\begin{proof}
    We start by showing a few properties which hold in the setting of this theorem. 
    For all $j,h\ge 0$, we have $k_{i,j+h}=k_{i,j}+k_{\sigma^j(i),h}$ and $m_{i,j+h}=m_{i,j}+m_{\sigma^j(i),h}$. Therefore, for all $e\in\{0,\ldots,M-1\}$, we have 
    \begin{equation}
    \label{eq:Delta_e'}
        \Delta_{i,e,j+h}=\Delta_{\sigma^j(i),e',h}
    \end{equation}
    where $e'=(e-m_{i,j})\bmod M$. In particular, note that $e'$ does not depend on $h$. From our hypotheses, we have that $k_{\sigma^j(i),M'r}=k_{i,M'r}=M'k_{i,r}=Mp$. We obtain that  $k_{i,j+M'r}=k_{i,j}+Mp$ and $m_{i,j+M'r}= m_{i,j}+M$. Hence
    \begin{equation}
    \label{eq:Delta_ieh-periodique}
        \Delta_{i,e,j+M'r}=\Delta_{i,e,j}.
    \end{equation}
    We get that, on the one hand, 
    \begin{equation}
    \label{eq:sum-Delta-M'r-ij}
        \Delta^{(M'r)}_{i,e}=\Delta^{(M'r)}_{\sigma^j(i),e'}
    \end{equation}
    and, on the other hand, 
    \begin{equation}
    \label{eq:sum-Delta-cM'r}
        \Delta^{(c M'r)}_{i,e}=c\Delta^{(M'r)}_{i,e}.
    \end{equation}
    for all $c\ge 1$. In particular, since $\sigma^r(i)=i$, note that $k_{i,r}=m_{i,r}p$ and that the relation \eqref{eq:sum-Delta-M'r-ij} with $j=r$ becomes $\Delta^{(M'r)}_{i,e}=\Delta^{(M'r)}_{i,e'}$ where $e'=(e-\frac{k_{i,r}}{p})\bmod M$. Therefore, assuming that $\Delta^{(M'r)}_{\sigma^j(i),e}\ge 0$ for all $e\in \{0,\ldots,\frac{k_{i,r}}{p}-1\}$ implies that $\Delta^{(M'r)}_{\sigma^j(i),e}\ge 0$ for all $e\in \{0,\ldots,M-1\}$.
    
    We obtain from the previous paragraph that \ref{item:i-cycle-d} implies \ref{item:i-cycle-c}. The fact that \ref{item:i-cycle-b} implies \ref{item:i-cycle-a} is obvious. 
    
    In order to show that \ref{item:i-cycle-c} implies \ref{item:i-cycle-b}, we will show the stronger fact that if for all $e\in \{0,\ldots,\frac{k_{i,r}}{p}-1\}$, we have $\Delta^{(M'r)}_{i,e}\ge 0$, then for all $c\ge 1$, the language $L_{i,c}$ is regular. Thus, we consider a fixed $c\ge 1$, and we suppose that $\Delta^{(M'r)}_{i,e}\ge 0$ for all $e\in \{0,\ldots,\frac{k_{i,r}}{p}-1\}$.
    From~\cref{lem:SplitModp}, to get the regularity of the language $L_{i,c}$, it is enough to prove that the languages
    \[
        L_{i,c,e}=\{\rep_{i,c}(n) : n\ge 1,\ n\equiv e\pmod M,\ U_{np-i}\ge c\}.
    \]
    are regular for all $e\in\{0,\ldots,M-1\}$. We fix such an $e$ and consider two cases.
    
    First, suppose that there exists $q\in\{1,\ldots, cM'r\}$ such that $\Delta^{(q)}_{i,e}\ge c$.
    We choose a minimal such $q$. Thus we have $\Delta^{(q')}_{i,e}< c$ for $q'<q$. By \cref{lem:tech}, there exists $N$ such that for all $n\ge N$, we have
    \[
        \rep_{i,c}(nM+e)
        \in d'_id'_{\sigma(i)}\cdots d'_{\sigma^{q-2}(i)} d_{\sigma^{q-1}(i)}
        0^* \rep_U\big(\Delta^{(q)}_{i,e}-c\big).
    \]
    We get that $L_{i,c,e}$ is regular as 
    \[
        L_{i,c,e}= F\cup \left(d'_id'_{\sigma(i)}\cdots d'_{\sigma^{q-2}(i)} d_{\sigma^{q-1}(i)}
        0^* \rep_U\big(\Delta^{(q)}_{i,e}-c\big)\cap \big(A_U^{Mp}\big)^*A_U^{(NM+e)p-i} \right)
    \]
    where $F$ is a finite language. 
    
    Second, suppose that $\Delta^{(q)}_{i,e}< c$ for all $q\in\{1,\ldots, cM'r\}$. Combining our assumption that $\Delta^{(M'r)}_{i,e}\ge 0$ with \eqref{eq:sum-Delta-cM'r}, we get that in fact $\Delta^{(M'r)}_{i,e}=0$. By \cref{lem:tech} and by using that $m_{i,cM'r}=cM$, there exists $N$ such that for all $n\ge N$, we have
    \begin{align*}
        \rep_{i,c}(nM+e)
        &= d'_id'_{\sigma(i)}\cdots d'_{\sigma^{cM'r-1}(i)} \rep_{i,c}(nM+e-m_{i,cM'r}) \\
        &= d'_id'_{\sigma(i)}\cdots d'_{\sigma^{cM'r-1}(i)} \rep_{i,c}((n-c)M+e).
    \end{align*}
    We iterate this argument $\ell$ times until $n-\ell c<N$. We obtain again that $L_{i,c,e}$ is regular as 
    \[
        L_{i,c,e}= F\cup \left(d'_id'_{\sigma(i)}\cdots d'_{\sigma^{cM'r-1}(i)}\right)^* G
    \]
    where $F$ and $G$ are finite languages, namely, 
    \begin{align*}
        F&=\{\rep_{i,c}(nM+e) : 1\le n<N-c,\ U_{(nM+e)p-i}\ge c\} \\
        G&=\{\rep_{i,c}((N-t) M+e): 1\le t\le c\}.
    \end{align*}
    
    \medskip
    We turn our attention to show that \ref{item:i-cycle-a} implies \ref{item:i-cycle-d}. We proceed by contraposition. Thus, we assume that there is some $e\in\{0,\ldots,M-1\}$ with $\Delta^{(M'r)}_{i,e}<0$.
    We have to show that one of the $r$ languages $L_i,L_{\sigma(i)}\ldots,L_{\sigma^{r-1}(i)}$ is not regular. In view of \eqref{eq:Delta_ieh-periodique} and by \cref{lem:SequenceRotation}, there exists $j\in\{0,\ldots,M'r-1\}$ such that for all $t\in\{1,\ldots,M'r\}$, we have
    \[
        \sum_{h=j}^{j+t-1} \Delta_{i,e,h}<0.
    \]
    We are going to show that the language $L_{\sigma^j(i)}$ is not regular.
    Using \eqref{eq:Delta_e'}, the latter sum can be reexpressed as
    \[
        \sum_{h=0}^{t-1} \Delta_{i,e,j+h}
        =\sum_{h=0}^{t-1} \Delta_{\sigma^j(i),e',h}
        =\Delta^{(t)}_{\sigma^j(i),e'}
    \]
    where $e'=(e - m_{i,j}) \bmod M$. In view of \eqref{eq:sum-Delta-M'r-ij} and \eqref{eq:sum-Delta-cM'r}, for all $t\in\{1,\ldots,M'r\}$ and all $c\ge 1$, we have
    \[
        \Delta^{(t+cM'r)}_{\sigma^j(i),e'}
        =\Delta^{(t)}_{\sigma^j(i),e'}+c\Delta^{(M'r)}_{\sigma^{j+t}(i),e''}
        =\Delta^{(t)}_{\sigma^j(i),e'}+c\Delta^{(M'r)}_{i,e}
        <0
    \]
     where $e''=(e-m_{i,j+t}) \bmod M$. We obtain that 
     \[
        \Delta^{(t)}_{\sigma^j(i),e'}<0
     \]
     for all $t\ge 1$.

    Now, let us fix some $C\ge 1$. \cref{lem:tech} ensures that there exists $n$ such that for all $c\in\{0,\ldots, C-1\}$, we have
    \[
        \rep_{\sigma^j(i),1}((n+c)M+e')
        = d'_{\sigma^j(i)}d'_{\sigma^{j+1}(i)}\cdots d'_{\sigma^{j+cM'r-1}(i)} 
        \rep_{\sigma^j(i),c'}(nM+e')
    \]
    where we have set $c'=1-c\Delta^{(M'r)}_{i,e}$, and where we have used~\eqref{eq:sum-Delta-M'r-ij} and \eqref{eq:sum-Delta-cM'r}, as well as the equality $m_{\sigma^j(i),cM'r}=cM$. In particular, the $C$ suffixes 
    \[
        \rep_{\sigma^j(i),c'}(nM+e'),\quad \text{ for }c'\in\{1-c\Delta^{(M'r)}_{i,e}: 0\le c<C\},
    \]
    are distinct and all of the same length $(nM+e')p-\sigma^j(i)$. 
    This implies that $L_{\sigma^j(i)}$ is not regular as slender regular languages have a uniformly bounded number of suffixes of the same length by \cref{lem:xy*z}.
\end{proof}

\begin{remark}
\label{rem:i-cycle-effective}
    As in the previous sections, \cref{thm:i-cycle} can be used effectively in order to decide the regularity of all the languages $L_i$ from a cycle. If the eigenvalues of $U$ are known, then those of the sequences $\Delta_{\sigma^j(i)}$ can be computed, allowing us to test whether they are all ultimately periodic. If this is indeed the case, then the values of $M,M'$ and $\Delta_{i,e}^{(M'r)}$ can be computed, and the condition \ref{item:i-cycle-d} of \cref{thm:i-cycle} can be tested.
\end{remark}

In the dominant root case, this section concerns the case where the dominant root is a simple Parry number, i.e., $d_\beta(1)=t_0\ldots t_{\ell-1}0^\omega$. In this situation, the graph $G$ contains a single vertex with a loop, and \cref{prop:cycle-per,thm:i-cycle} reduce to the following corollary, where the sequence $\Delta=(\Delta_n)_{n\ge \ell}$ is defined by
\[
    \Delta_n=U_n-\sum_{k=1}^\ell t_{k-1}U_{n-k}.
\]

\begin{corollary}
\label{cor:simple-Parry}
    Let $U$ be a positional numeration system with a dominant root $\beta \ge  1$ such that $d_\beta(1)$ is finite of length $\ell$, i.e., $\beta$ is a simple Parry number or $\beta=1$.

    \smallskip
    \begin{itemize} \setlength\itemsep{0.7em}
        \item If the numeration language $L_U$ is regular, then the sequence $\Delta$ is ultimately periodic.
        \item Assume that the sequence $\Delta$ is ultimately periodic with a preperiod $N\ge \ell$ and a period $M=M'\ell$ with $M'\ge 1$. Then the numeration language $L_U$ is regular if and only if 
        \[
            \sum_{j=0}^{M'-1}\Delta_{n-j\ell}\ge 0
        \]
        for all $n\in\{N+M-\ell,\ldots,N+M-1\}$. 
    \end{itemize} 
\end{corollary}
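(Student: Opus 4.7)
The plan is to derive both bullets as direct specializations of \cref{prop:cycle-per} and \cref{thm:i-cycle} to the dominant root case. Since $\beta$ is the dominant root of $U$, the associated alternate base has length $p=1$, and because $d_\beta(1)=t_0\cdots t_{\ell-1}0^\omega$ is finite of length $\ell$, the graph $G$ reduces to a single vertex $0$ carrying a self-loop (as $\sigma(0)=\ell\bmod 1 = 0$), so we are in the situation $r=1$. Under this dictionary, $\mathbf{d}_0 = d_\beta(1)$, the sequence $\Delta_0$ of \cref{def:Delta_i} coincides with the sequence $\Delta$ of the corollary, and by \cref{prop:RegIffMaxReg} combined with \cref{lem:Max-Words} the regularity of $L_U$ is equivalent to that of $L_0$.

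The first bullet is then immediate: if $L_U$ is regular, \cref{prop:cycle-per} applied with $i=0$, $r=1$ yields that $\Delta = \Delta_0$ is ultimately periodic.

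For the second bullet, I apply \cref{thm:i-cycle} with $i=0$, $r=1$. Since $p=1$ and $k_{0,1}=\ell$, the hypothesis $Mp = M'k_{0,r}$ of the theorem becomes exactly our assumption $M = M'\ell$. Using $m_{0,j} = j\ell$, one computes
\[
(\Delta_0^{(M')})_n = \sum_{j=0}^{M'-1}\Delta_{n-j\ell},
\]
so condition~\ref{item:i-cycle-d} of \cref{thm:i-cycle} reads: for every $e \in \{0,\ldots,\ell-1\}$, the ultimate constant value of $\sum_{j=0}^{M'-1}\Delta_{nM+e-j\ell}$ is nonnegative.

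It remains to match this with the inequality stated in the corollary. First, for every $n$ in the window $\{N+M-\ell,\ldots,N+M-1\}$, each index $n-j\ell$ with $j\in\{0,\ldots,M'-1\}$ satisfies $N \le n-j\ell \le N+M-1$, so it lies within one full period of $\Delta$. Second, the map $n \mapsto \sum_{j=0}^{M'-1}\Delta_{n-j\ell}$ is $\ell$-periodic on this window, since shifting $n$ by $\ell$ alters the sum only through the boundary terms $\Delta_{n+\ell}$ and $\Delta_{n-(M'-1)\ell}$, which are equal because their indices differ by $M'\ell = M$. Since the window contains exactly one representative of each residue class modulo $\ell$, and these classes correspond bijectively to the $\ell$ limit values $\Delta_{0,e}^{(M')}$ for $e\in\{0,\ldots,\ell-1\}$, the two conditions agree. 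No serious obstacle is anticipated: the proof is essentially a translation of notation, the only care required being this final check of equivalence between the finite window condition and the $\ell$ residue-class condition of \cref{thm:i-cycle}.
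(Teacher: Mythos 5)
Your proposal is correct and follows exactly the paper's route: the paper's own proof is the one-line observation that \cref{prop:cycle-per} and \cref{thm:i-cycle} apply with $p=1$, $i=0$, $r=1$, $k_{i,r}=\ell$. Your additional verification that the window $\{N+M-\ell,\ldots,N+M-1\}$ matches the $\ell$ residue classes of condition~\ref{item:i-cycle-d} (via the $\ell$-periodicity of $n\mapsto\sum_{j=0}^{M'-1}\Delta_{n-j\ell}$) is a correct spelling-out of a detail the paper leaves implicit.
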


\begin{proof}
    This follows from \cref{prop:cycle-per,thm:i-cycle} where the notation boils down to $p=1$, $i=0$, $r=1$, $k_{i,r}=\ell$.
\end{proof}

In comparison with \cite{Hollander:1998}, this result is new. It provides a necessary and sufficient condition for the regularity of $L_U$ in the simple Parry dominant root case, which is precisely the case that Hollander did not solve entirely. In particular, Hollander exhibited an example showing that the regularity of $L_U$ can depend on the initial conditions, and not only on the characteristic polynomial of a linear recurrence satisfied by $U$. The example is the following. Suppose that $U$ satisfies the linear recurrence of characteristic polynomial $(X-1)(X-3)$. This exactly means that the sequence $\Delta$ is constant as this sequence is given by $\Delta_n=U_n-3U_{n-1}$, thus it satisfies the linear recurrence of characteristic polynomial $X-1$. As shown by Hollander, by choosing the initial conditions $U_0=1$, $U_1=4$, we obtain a regular numeration language, whereas by choosing the initial conditions $U_0=1$, $U_1=2$, we obtain a non-regular numeration language. This situation was not handled by Hollander's result but is covered by \cref{cor:simple-Parry}. The notation drastically reduces since the dominant root of the system is $3$ and $\Delta$ is constant, hence we get $\ell=N=M=M'=1$. For the initial conditions $U_0=1$, $U_1=4$, we get that $U_n=3U_{n-1}+1$ and $\Delta_n=1$ for all $n\ge 1$. Therefore, \cref{cor:simple-Parry} tells us that $L_U$ is regular. However, for the initial conditions $U_0=1$, $U_1=2$, we get that $U_n=\frac{3^n+1}{2}$ and $\Delta_n=-1$ for all $n\ge 1$. In this case, \cref{cor:simple-Parry} yields that $L_U$ is not regular. 

The results of this section also include the case where $U$ has $1$ as a dominant root. We note that this case was not considered at all in \cite{Hollander:1998}. We discuss this particular case in the next corollary.

\begin{corollary}
\label{cor:beta=1}
     Let $U$ be a positional numeration system with the dominant root $1$.
    Then the numeration language $L_U$ is regular if and only if the sequence $(U_{n+1}-U_n)_{n\ge 0}$ is ultimately periodic. 
    
    In particular, if the sequence $U$ is ultimately a polynomial, i.e., there exists a polynomial $P\in\C[X]$ and an integer $N\ge 0$ such that $U_n=P(n)$ for all $n\ge N$, then the numeration language $L_U$ is regular if and only if this polynomial has integer coefficients and degree $1$, i.e., there exists $a,b\in\Z$ with $a>0$ such that $U_n=an+b$ for all $n\ge N$.
\end{corollary}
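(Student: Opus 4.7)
The plan is to reduce the statement to Corollary \ref{cor:simple-Parry}. With $\beta = 1$ we have $p = 1$ and, by the convention adopted for the degenerate alternate base $(1)$, $d_\beta(1) = 10^\omega$, which is finite of length $\ell = 1$ with $t_0 = 1$. Hence the auxiliary sequence introduced before Corollary \ref{cor:simple-Parry} is precisely $\Delta_n = U_n - U_{n-1}$, so its ultimate periodicity is equivalent to that of $(U_{n+1} - U_n)_{n \ge 0}$ in the statement.

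For the first assertion, Corollary \ref{cor:simple-Parry} immediately yields necessity. For sufficiency, suppose $\Delta$ is ultimately periodic with preperiod $N$ and period $M$ (so that, in the notation of Corollary \ref{cor:simple-Parry}, $M' = M$ since $\ell = 1$). The positivity criterion then reduces to checking, for the single relevant index $n = N + M - 1$, that
\[
\sum_{j=0}^{M-1} \Delta_{n-j} = U_n - U_{n-M} \ge 0,
\]
which is automatic (and in fact strict) because $U$ is strictly increasing. Hence the criterion of Corollary \ref{cor:simple-Parry} is satisfied for free, and $L_U$ is regular.

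For the second assertion, if $U_n = P(n)$ eventually, then $\Delta_n$ eventually equals the polynomial $P(n) - P(n-1)$ of degree $\deg P - 1$. Any polynomial sequence which is ultimately periodic must be eventually constant, forcing $\deg P \le 1$; strict monotonicity of $U$ rules out $\deg P = 0$, so $P(X) = aX + b$. Integrality of the two consecutive values $U_N$ and $U_{N+1}$ gives $a = U_{N+1} - U_N \in \mathbb{Z}$, whence $b = U_N - aN \in \mathbb{Z}$, and strict monotonicity gives $a > 0$. Conversely, such a linear $P$ produces a constant $\Delta$, which is trivially ultimately periodic, so $L_U$ is regular by the first assertion.

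I do not anticipate a real obstacle: the argument is a direct specialization of Corollary \ref{cor:simple-Parry} combined with the telescoping identity for $\Delta_n = U_n - U_{n-1}$, together with the elementary remark that an ultimately polynomial integer sequence is ultimately periodic if and only if the polynomial is constant. The only care needed is in matching the $\beta = 1$ case with the notation and conventions of Corollary \ref{cor:simple-Parry} and in exploiting the strict monotonicity of any base sequence of a positional numeration system.
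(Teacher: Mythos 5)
Your proof is correct and follows essentially the same route as the paper: specialize \cref{cor:simple-Parry} to $p=1$, $\ell=1$, $\Delta_n=U_n-U_{n-1}$, and observe that the positivity criterion telescopes to $U_{N+M-1}-U_{N-1}\ge 0$, which holds since $U$ is increasing. The paper dismisses the polynomial case as "straightforward"; your spelled-out argument for it is exactly the intended one.
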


\begin{proof}
     As for \cref{cor:simple-Parry}, under the dominant root hypothesis, the notation of this section reduces to $p=1$, $i=0$, $r=1$, $k_{i,r}=\ell$. Moreover, since the dominant root is $1$, we also have $\ell=1$ and the sequence $\Delta$ is given by $\Delta_n=U_n-U_{n-1}$ for $n\ge 1$. By \cref{cor:simple-Parry}, if $L_U$ is regular then $\Delta$ is ultimately periodic. Let us argue that the converse also holds. Suppose that $\Delta$ is ultimately periodic with preperiod $N\ge 1$ and period $M\ge 1$. With the notation of \cref{cor:simple-Parry}, we have $M'=M$ and in order to obtain that $L_U$ is regular, it suffices to show that 
     \[
        \sum_{j=0}^{M-1}\Delta_{N+j}=\sum_{j=0}^{M-1}(U_{N+j}-U_{N+j-1})\ge 0.
     \]
     This is clearly the case as $U$ is an increasing sequence of integers.

     The particular case is straightforward.
\end{proof}

\begin{example}
The sequence $U$ given by the initial conditions $U_0=1,U_1=2$ and the relation $U_n=U_{n-2}+3$ for $n\ge 2$ yields a regular numeration language as $(U_{n+1}-U_n)_{n\ge 1}=(1,2,1,2,\ldots)$. Indeed, it easily checked that $L_U=10^*\cup 1(00)^*1\cup \{\varepsilon\}$. On the other hand, Shallit proved in \cite{Shallit:1994} that the numeration system $U$ given by $U_n=(n+1)^2$ for all $n\ge 0$ has a non-regular numeration language. This result can be recovered by the characterization given in \cref{cor:beta=1}.
\end{example}

Finally, similarly to the discussion ending \cref{sec:i-infini}, let us explain how the second part of Hollander's main result from \cite{Hollander:1998} can be re-obtained as a consequence of \cref{cor:simple-Parry}.

\begin{corollary}[\cite{Hollander:1998}]
    Let $U$ be a numeration system with a dominant root $\beta>1$ such that $d_\beta(1)$ is finite of length $\ell$, i.e., $\beta$ is a simple Parry number. 

    \smallskip
    \begin{itemize} \setlength\itemsep{0.7em}
        \item If the numeration language $L_U$ is regular, then the base sequence $U$ satisfies a linear recurrence relation whose characteristic polynomial is given by an extended $\beta$-polynomial multiplied by $(X^\ell-1)$.
        \item If the base sequence $U$ satisfies a linear recurrence relation whose characteristic polynomial is given by an extended $\beta$-polynomial, then the numeration language $L_U$ is regular.
    \end{itemize}
\end{corollary}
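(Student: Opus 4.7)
The approach I would take is to reformulate the characterization of \cref{cor:simple-Parry} directly in terms of linear recurrences on $U$. Let $P_\beta:=X^\ell-t_0X^{\ell-1}-\cdots-t_{\ell-1}$, where $d_\beta(1)=t_0\cdots t_{\ell-1}$. Reading $P_\beta$ as a shift operator, the sequence $\Delta$ of \cref{cor:simple-Parry} is exactly $P_\beta\cdot U$. In the simple Parry case, the quasi-greedy expansion of $1$ is $(t_0\cdots t_{\ell-2}(t_{\ell-1}-1))^\omega$, so $q_0=0$ and $m_0=\ell$, and a direct check yields $P_{q_0,m_0}=P_\beta$. By~\eqref{eq:extended-beta-polynomials}, the extended $\beta$-polynomials in this setting are thus precisely the polynomials of the form $X^q(1+X^\ell+\cdots+X^{(k-1)\ell})P_\beta$ with $q\ge 0$ and $k\ge 1$.

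For the first item, I would assume $L_U$ is regular. By \cref{cor:simple-Parry}, $\Delta$ is ultimately periodic with some period $M$, which may be taken to be a multiple of $\ell$, say $M=M'\ell$. Therefore $(X^{M'\ell}-1)\Delta=0$ eventually, i.e., $(X^{M'\ell}-1)P_\beta\cdot U=0$ from some index onward. Padding by an appropriate $X^q$ to absorb the initial terms, $U$ satisfies the linear recurrence whose characteristic polynomial is $X^q(X^{M'\ell}-1)P_\beta$. Factoring $X^{M'\ell}-1=(X^\ell-1)(1+X^\ell+\cdots+X^{(M'-1)\ell})$ rewrites this polynomial as $(X^\ell-1)\cdot X^q(1+X^\ell+\cdots+X^{(M'-1)\ell})P_\beta$, that is, $(X^\ell-1)$ times an extended $\beta$-polynomial, as desired.

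For the converse, I would suppose $U$ satisfies a linear recurrence with characteristic polynomial an extended $\beta$-polynomial $X^q(1+X^\ell+\cdots+X^{(k-1)\ell})P_\beta$. Applying the corresponding operator to $U$ gives $(1+X^\ell+\cdots+X^{(k-1)\ell})\Delta=0$ for all sufficiently large indices, i.e.,
\[
\Delta_n+\Delta_{n+\ell}+\cdots+\Delta_{n+(k-1)\ell}=0
\]
for $n$ large enough. Subtracting this identity from its shift by $\ell$ yields $\Delta_{n+k\ell}=\Delta_n$, so $\Delta$ is ultimately periodic with period $M=k\ell$ and $M'=k$. Moreover, the displayed relation forces every sum of $M'$ consecutive terms of $\Delta$ at step $\ell$ to equal $0$, which is $\ge 0$. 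Both hypotheses of \cref{cor:simple-Parry} are therefore satisfied, so $L_U$ is regular. The only mild subtlety I anticipate is verifying the identity $P_{q_0,m_0}=P_\beta$ for the quasi-greedy expansion; beyond that, the plan reduces to routine bookkeeping with polynomial operators acting on linear recurrent sequences.
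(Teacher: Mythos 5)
Your proposal is correct and follows essentially the same route as the paper: identify the Parry polynomial $P_{0,\ell}$ built from the quasi-greedy expansion with $P_\beta$ (so that $\Delta=P_\beta\cdot U$), translate the ultimate periodicity of $\Delta$ into the factor $X^{M'\ell}-1=(X^\ell-1)(1+X^\ell+\cdots+X^{(M'-1)\ell})$ for the first item, and observe that $(1+X^\ell+\cdots+X^{(k-1)\ell})\Delta=0$ forces both periodicity and the vanishing (hence nonnegativity) of the circular sums required by \cref{cor:simple-Parry} for the second. The only cosmetic difference is that you derive the period $k\ell$ by subtracting the relation from its shift, whereas the paper invokes the factorization of $X^{M\ell}-1$ directly; the content is identical.
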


\begin{proof}
    Suppose that $d_\beta(1)=t_0\ldots t_{\ell-1}0^\omega$ with $t_{\ell-1}>0$. Then $d^*_\beta(1)=(t_0\ldots t_{\ell-2}(t_{\ell-1}-1))^\omega$. The Parry polynomial was introduced at the end of \cref{sec:i-infini} for non-simple Parry numbers. Mimicking the definition by using the digits of the quasi-greedy expansion $d_{\beta}^*(1)=(t_0\ldots t_{\ell-2}(t_{\ell-1}-1))^\omega$, the Parry polynomial is defined as
    \[
        P_{0,\ell}
        :=\left(X^\ell-\sum_{k=1}^{\ell-1} t_{k-1} X^{\ell-k}-(t_{\ell-1}-1) \right)-X^0
        =X^\ell- \sum_{k=1}^\ell t_{k-1}X^{\ell-k}
    \]
    and the extended $\beta$-polynomials as $P_{N,M\ell}=X^N(1+X^\ell+\ldots+X^{(M-1)\ell})P_{0,\ell}$ for $N\ge 0$ and $M\ge 1$. Thus, the first part of \cref{cor:simple-Parry} amounts to saying that if the numeration language $L_U$ is regular, then the base sequence $U$ satisfies a linear recurrence relation whose characteristic polynomial is of the form $X^N(X^M-1)P_{0,\ell}$ for some $N\ge 0$ and $M\ge 1$. Since $X^N(X^M-1)P_{0,\ell}=(X^\ell-1)P_{N,M\ell}$, this proves the first item of the result.

    For the second item, suppose that the base sequence $U$ satisfies a linear recurrence relation whose characteristic polynomial is given by an extended $\beta$-polynomial $P_{N,M\ell}$. This exactly means that the sequence $\Delta$ satisfies a linear recurrence relation whose characteristic polynomial is given by $X^N(1+X^\ell+\cdots+X^{(M-1)\ell})$. This in turn means that
    \[
        \sum_{j=0}^{M-1}\Delta_{n-j\ell}=0
    \]
    for all $n\ge N+M\ell$. Since $X^{M\ell}-1=(X^\ell-1)(1+X^\ell+\cdots+X^{(M-1)\ell})$, we get that $\Delta$ is ultimately periodic with period $M\ell$. Therefore, the second part of \cref{cor:simple-Parry} tells us that the numeration language $L_U$ is regular.
\end{proof}

\section{Vertices leading to a cycle}
\label{sec:i-vers-cycle}

This is the final case in our discussion. We are now focusing on vertices $i$ in the graph $G$ that have a non-trivial path leading to a cycle in $G$.  In this case, there exists $s\ge 1$ and $r\ge 1$ such that $\sigma^{s+r}(i)=\sigma^s(i)$. The expansions $\mathbf{d}_i,\ldots,\mathbf{d}_{\sigma^{s+r-1}(i)}$ are all finite and $\mathbf{d}_i^*=d_i'\cdots d_{\sigma^{s-1}(i)}'(d_{\sigma^{s}(i)}'\cdots d_{\sigma^{s+r-1}(i)}')^\omega$. In the remainder of this section, we assume that $s\ge 1$ is minimal such that there exists $r\geq 1$ with $\sigma^{s+r}(i)=\sigma^s(i)$, to avoid duplicating the previous case. As for \cref{sec:i-vers-infini}, we note that this situation does not occur in the dominant root case of \cite{Hollander:1998}, i.e., whenever $p=1$. We begin with a lemma.

\begin{lemma}
\label{lem:vp2x-2cond}
For any sequence $(x_n)_{n\ge 0}$ of complex numbers and any positive integer $M$, the following conditions are equivalent.

\smallskip
\begin{enumerate}[label=(\alph*)] \setlength\itemsep{0.7em}
    \item \label{item:vp2x-a} The sequence $(x_n)_{n\ge 0}$ is a linear recurrence sequence and all its nonzero eigenvalues are $M$-th roots of unity and have multiplicity at most $2$.
    \item \label{item:vp2x-b} The sequence $(x_{n+M}-x_n)_{n\ge 0}$ is ultimately periodic with period $M$.
\end{enumerate}
\end{lemma}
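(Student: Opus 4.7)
The plan is to argue both implications via the standard closed-form representation of linear recurrence sequences. Recall that a sequence $(x_n)_{n \ge 0}$ of complex numbers is a linear recurrence sequence with eigenvalues $\alpha_1, \ldots, \alpha_d$ of multiplicities $m_1, \ldots, m_d$ if and only if there exist polynomials $P_1, \ldots, P_d$ with $\deg P_j = m_j - 1$ and an integer $N_0 \ge 0$ such that $x_n = \sum_{j=1}^d P_j(n) \alpha_j^n$ for all $n \ge N_0$. Writing $S$ for the shift operator $(Sx)_n = x_{n+1}$, this is also equivalent to saying that the ideal $I_x$ of polynomials annihilating $(x_n)$ (up to a finite initial segment, by multiplying by a power of $X$) is nonzero.

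For the direction \ref{item:vp2x-a} $\Rightarrow$ \ref{item:vp2x-b}, I would apply the closed form, noting that the zero eigenvalue contributes only for small $n$ and that each polynomial $P_j$ attached to a nonzero eigenvalue has degree at most $1$. Thus for all sufficiently large $n$,
\[
    x_n = \sum_{\alpha} (a_\alpha + b_\alpha n)\, \alpha^n,
\]
where $\alpha$ ranges over the nonzero eigenvalues, all satisfying $\alpha^M = 1$. A direct computation then yields
\[
    x_{n+M} - x_n = \sum_{\alpha} \bigl((a_\alpha + b_\alpha(n+M))\alpha^M - (a_\alpha + b_\alpha n)\bigr)\alpha^n = M \sum_\alpha b_\alpha \alpha^n,
\]
which is a $\mathbb{C}$-linear combination of sequences $(\alpha^n)_{n \ge 0}$ with $\alpha^M = 1$. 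Each such sequence has period dividing $M$, so $(x_{n+M} - x_n)_{n \ge 0}$ is ultimately periodic with $M$ as a period.

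For the direction \ref{item:vp2x-b} $\Rightarrow$ \ref{item:vp2x-a}, I would set $y_n = x_{n+M} - x_n$ and observe that the hypothesis says $(S^M - I) y$ vanishes eventually, that is, $(S^M - I)^2 x$ vanishes eventually. Multiplying by a suitable $X^{N}$ to absorb the finite initial discrepancy, the polynomial $X^N(X^M - 1)^2$ lies in $I_x$, so $x$ is a linear recurrence sequence and its minimal polynomial divides $X^N(X^M - 1)^2$. The nonzero roots of this polynomial are exactly the $M$-th roots of unity, each with multiplicity $2$, so every nonzero eigenvalue of $x$ is an $M$-th root of unity of multiplicity at most $2$.

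The only subtle point is the distinction between a sequence satisfying a recurrence from the very start versus only eventually, which is handled throughout by the factor $X^N$; there is no real obstacle, and both directions reduce to the closed-form manipulation sketched above.
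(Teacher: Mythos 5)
Your proof is correct and follows essentially the same route as the paper's: the forward direction via the closed form $x_n=\sum_\alpha(a_\alpha+b_\alpha n)\alpha^n$ showing the difference sequence reduces to $M\sum_\alpha b_\alpha\alpha^n$, and the backward direction by observing that the hypothesis means $X^N(X^M-1)^2$ annihilates the sequence. The only cosmetic difference is that the paper verifies periodicity by computing $x_{n+2M}-x_{n+M}$ and $x_{n+M}-x_n$ separately, while you note directly that $M\sum_\alpha b_\alpha\alpha^n$ is a combination of $M$-periodic sequences; both are fine.
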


\begin{proof}
We start by proving that \ref{item:vp2x-a} implies \ref{item:vp2x-b}. We suppose that \ref{item:vp2x-a} holds and we let $\lambda_1,\ldots,\lambda_d$ be the nonzero eigenvalues of $(x_n)_{n\ge 0}$. Then there exist $N\ge 0$ and $a_1,\ldots,a_d$, $b_1,\ldots,b_d\in\C$  such that
\[
    x_n=\sum_{i=1}^d (a_in+b_i)\lambda_i^n
\]
for all $n\ge N$. We get
\[
    x_{n+M}-x_n = \sum_{i=1}^m \left((a_i(n+M)+b_i)\lambda_i^{n+M}
    - (a_in+b_i)\lambda_i^n\right) 
                = \sum_{i=1}^m a_iM\lambda_i^n
\]
and
\[
    x_{n+2M}-x_{n+M} = \sum_{i=1}^d \left((a_i(n+2M)+b_i)\lambda_i^{n+2M}
                    - (a_i(n+M)+b_i)\lambda_i^{n+M}\right) 
                = \sum_{i=1}^d a_iM\lambda_i^n
\]
for all $n\ge N$. Thus, we see that the sequence $(x_{n+M}-x_n)_{n\ge 0}$ is ultimately periodic with period $M$.

We now prove the converse implication. Suppose that there exists $N\ge 0$ such that for all $n\ge N$, we have $x_{n+2M}-x_{n+M}=x_{n+M}-x_n$, or equivalently, $x_{n+2M}=2x_{n+M}-x_n$. Then the sequence $(x_n)_{n\ge 0}$ satisfies the recurrence relation of characteristic polynomial $X^N(X^{2M}-2X^M+1)$, which is $X^N(X^M-1)^2$. Therefore, the eigenvalues of $(x_n)_{n\ge 0}$ are either $0$ or $M$-th roots of unity with multiplicity at most $2$, as expected.
\end{proof}

\begin{proposition}
\label{prop:vers-cycle-vp2x}
    Let $i\in\{0,\ldots,p-1\}$ be such that there exist $s\ge 1$ and $r\ge 1$ with $\sigma^{s+r}=\sigma^s(i)$, with $s$ minimal. If the languages $L_i,L_{\sigma(i)},\ldots,L_{\sigma^{s+r-1}(i)}$ are all regular, then the sequence $\Delta_i$ satisfies the two conditions of \cref{lem:vp2x-2cond} for some $M\ge 1$.
\end{proposition}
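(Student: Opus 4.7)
The plan is to argue by induction on the minimal tail length $s\ge 1$ from $i$ to the cycle of $G$. The induction hypothesis, together with Proposition~\ref{prop:cycle-per}, gives that every sequence $\Delta_{\sigma^h(i)}$ with $1\le h\le s-1$ already satisfies Lemma~\ref{lem:vp2x-2cond}, and that every sequence $\Delta_{\sigma^h(i)}$ with $s\le h\le s+r-1$ is outright ultimately periodic. I would then choose $M\ge 1$ large enough to serve simultaneously as a common period for the cycle sequences $\Delta_{\sigma^h(i)}$, a common Lemma~\ref{lem:vp2x-2cond}(b) period for the tail ones, a decomposition parameter $|y_e|=Mp$ in the Lemma~\ref{lem:xy*z} decomposition $L_i=F\cup\bigcup_{e=0}^{M-1} x_e y_e^* z_e$, and a multiple of $k_{\sigma^s(i),r}$ (the length of the period of $\mathbf{d}_i^*$).

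By Lemma~\ref{lem:vp2x-2cond}, the goal reduces to showing that $(\Delta_i)_{n+M}-(\Delta_i)_n$ is ultimately periodic with period $M$, equivalently, ultimately constant along each residue class $e$ modulo $M$. Fixing such an $e$, Proposition~\ref{prop:Pref-max=Pref-di} forces the infinite word $x_e y_e^\omega$ to equal either some $\mathbf{w}_{i,j_0}$ (Case~(i), which gives $y_e=0^{Mp}$) or the quasi-greedy expansion $\mathbf{d}_i^*$ itself (Case~(ii), in which $y_e$ is a cyclic shift of $(d_{\sigma^s(i)}'\cdots d_{\sigma^{s+r-1}(i)}')^{\gamma}$ with $\gamma=Mp/k_{\sigma^s(i),r}$).

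Case~(i) is handled exactly as the first case in the proof of Theorem~\ref{thm:i-vers-infini}: one obtains
\[
(\Delta_i)_n-(\Delta_i)_{n-M}=-\sum_{h=1}^{j_0}\bigl[(\Delta_{\sigma^h(i)})_{n-m_{i,h}}-(\Delta_{\sigma^h(i)})_{n-m_{i,h}-M}\bigr],
\]
and every summand is ultimately constant on the $e$-class (zero when $\sigma^h(i)$ is in the cycle, $M$-periodic by induction when it is in the tail). In Case~(ii) I would expand $\val_U(x_e y_e^\alpha z_e)=U_{np-i}-1$ digit-by-digit, splitting the word into the preperiod portion $d_i'\cdots d_{\sigma^{s-1}(i)}'$, a growing number $q_0+\alpha\gamma$ of complete cycle periods, a partial period of length $\rho_e=(|x_e|-k_{i,s})\bmod k_{\sigma^s(i),r}$, and the suffix $z_e$. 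Subtracting the analogous expression for $n-M$, the partial period plus $z_e$ contribution cancels (depending only on $e$), the preperiod part telescopes and combines with the correction $\sum_\ell t_{i,\ell-1}U_{\,\cdot}$ from the definition of $(\Delta_i)_n$, so that one should reach an identity of the form
\[
(\Delta_i)_n-(\Delta_i)_{n-M}=-\sum_{h''=0}^{\gamma r-1}(\Delta_{\sigma^{s+h''}(i)})_{n-m_{i,s+h''}}-\sum_{h=1}^{s-1}\bigl[(\Delta_{\sigma^h(i)})_{n-m_{i,h}}-(\Delta_{\sigma^h(i)})_{n-m_{i,h}-M}\bigr].
\]
Each cycle summand is ultimately $M$-periodic and each tail summand is $M$-periodic by induction, so the right-hand side is again ultimately constant on the $e$-class.

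Combining the two cases for every residue $e$ gives that $(\Delta_i)_n-(\Delta_i)_{n-M}$ is ultimately periodic with period $M$, and Lemma~\ref{lem:vp2x-2cond} closes the induction. I expect the main obstacle to be the Case~(ii) computation: carrying the cyclic offset $\rho_e$ correctly through the value formula so that the ``partial period at the end'' truly combines with $z_e$ into a constant depending only on $e$, and properly isolating the sum over one full cycle period of $\Delta_{\sigma^{s+h''}(i)}$. This bookkeeping is a direct generalization of the calculations already carried out in the proofs of Theorems~\ref{thm:i-vers-infini} and~\ref{thm:i-cycle}.
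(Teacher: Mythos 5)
Your proposal follows essentially the same route as the paper's proof: induction on the tail length $s$, the slender-language decomposition with $|y_e|=Mp$ a multiple of $k_{\sigma^s(i),r}$, the dichotomy $x_ey_e^\omega=\mathbf{w}_{i,j_0}$ versus $x_ey_e^\omega=\mathbf{d}_i^*$, and in the second case your sum $\sum_{h''=0}^{\gamma r-1}(\Delta_{\sigma^{s+h''}(i)})_{n-m_{i,s+h''}}$ is exactly the paper's term $\bigl(\Delta_{\sigma^s(i)}^{(M'r)}\bigr)_{n-m_{i,s}}$, which is ultimately constant on each residue class by the periodicity inherited from \cref{prop:cycle-per}. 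The bookkeeping you flag as the main obstacle is handled in the paper by the same telescoping already carried out in the proofs of \cref{thm:i-vers-infini} and \cref{prop:cycle-per}, so your plan is correct as stated.
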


\begin{proof}
    Suppose that the languages $L_i,L_{\sigma(i)},\ldots,L_{\sigma^{s+r-1}(i)}$ are all regular. By~\cref{lem:xy*z} combined with a few arithmetic considerations, they can be decomposed in disjoint unions as follows. For every $h\in\{0,\ldots,s+r-1\}$, we have
    \[
        L_{\sigma^h(i)}
        = F_h \cup \bigcup_{e=0}^{M-1} x_{h,e}y_{h,e}^*z_{h,e}
    \]
    where $F_h$ is a finite language, $M\ge 1$ with $|y_{h,e}|=Mp$, $|x_{h,e}z_{h,e}|\equiv-\sigma^h(i)\pmod p$, $|x_{h,0}z_{h,0}|=tMp-\sigma^h(i)$ for some $t$, and $|x_{h,e+1}z_{h,e+1}|=|x_{h,e}z_{h,e}|+p$ for each $e$. Without loss of generality, we can ask that $Mp$ is a multiple of $k_{\sigma^s(i),r}$, which is the sum of the lengths of the finite expansions $\mathbf{d}_{\sigma^(i)}$ for $h\in\{s,\ldots,s+r-1\}$. We let $M'$ be this multiple, so that we have
    \[
        Mp=M'k_{\sigma^s(i),r}=k_{\sigma^s(i),M'r}.
    \]
    From the proof of \cref{prop:cycle-per}, we know that 
    \begin{equation}
    \label{eq:difference-Delta-vers-cycle}
        (\Delta_{\sigma^h(i)})_n=(\Delta_{\sigma^h(i)})_{n-M}
    \end{equation}
    for all large $n$ and all $h\in\{s,\ldots,s+r-1\}$. 

    Let us show that for each $e\in\{0,\ldots,M-1\}$, there exists a constant $\Gamma_{i,e}$ such that
    \[
        (\Delta_i)_n-(\Delta_i)_{n-M}=\Gamma_{i,e}
    \]
    holds for all large $n$ with $n\equiv e\pmod M$. We proceed by induction on $s\ge 1$. The base case $s=1$ and the induction step will be addressed simultaneously. Fix $e\in\{0,\ldots,M-1\}$ and let $n\ge (t+1)M$ such that $n\equiv e\pmod M$. Then $\rep_U(U_{np-i}-1)\in x_{0,e}y_{0,e}^*z_{0,e}$ and similarly as in the proof of \cref{prop:cycle-per}, either $x_{0,e}y_{0,e}^\omega=\mathbf{w}_{i,j}$ for some $j\ge 0$ or $x_{0,e}y_{0,e}^\omega=\mathbf{d}^*_i$, and we consider these two cases separately.

    \medskip
    First, we suppose that $x_{0,e}y_{0,e}^\omega=\mathbf{w}_{i,j}$ for some $j\ge 0$. Then $y_{0,e}=0^{Mp}$. As for \eqref{eq:difference-Delta} in the proof of \cref{thm:i-vers-infini}, we find 
    \[
       (\Delta_i)_n-(\Delta_i)_{n-M}
        =-\sum_{h=1}^j \left( (\Delta_{\sigma^h(i)})_{n-m_{i,h}} - (\Delta_{\sigma^h(i)})_{n-m_{i,h}-M} \right).
    \] 
    Using the induction hypothesis combined with \eqref{eq:difference-Delta-vers-cycle} for $h\in\{s,\ldots,s+r-1\}$, for large enough $n$, each term of the sum in the right-hand side is equal to a constant depending only on the residue class $e$ modulo $M$. (In particular, this constant is $0$ for $h\ge s$.)
    
    \medskip
    Second, we suppose that $x_{0,e}y_{0,e}^\omega=\mathbf{d}^*_i$. In this case, we find
    \[
       (\Delta_i)_n-(\Delta_i)_{n-M}
        =
        -\left(\sum_{h=1}^{s-1} \left( (\Delta_{\sigma^h(i)})_{n-m_{i,h}} - (\Delta_{\sigma^h(i)})_{n-m_{i,h}-M} \right)\right)
        -\left(\Delta_{\sigma^s(i)}^{(M'r)}\right)_{n-m_{i,s}}.
    \] 
    As for the first case, for large enough $n$, each term of the sum in the right-hand side is equal to a constant. Moreover, from the proof of \cref{prop:cycle-per}, we know that the sequences $\Delta_{\sigma^s(i)},\Delta_{\sigma^{s+1}(i)}\ldots,\Delta_{\sigma^{s+r-1}(i)}$ are all ultimately periodic with period $M$. This implies that for all large $n$, the additional term has the constant value $\Delta_{\sigma^s(i),e'}^{(M'r)}$, where $e'=(e-m_{i,s})\bmod M$.
\end{proof}

There are indeed cases where $i$ is on a path leading to a cycle in $G$, is such that the corresponding language $L_i$ is regular and the sequence $\Delta_i$ satisfies the conditions of \cref{lem:vp2x-2cond} without being periodic. This is illustrated by the following example. 

\begin{example}
\label{ex:racines-doubles}
    Consider the sequence $U$ given by $U_0=1$ and the relations
    \[    
        U_n=
        \begin{cases}
            U_{n-1}+U_{n-2}+1,      & \text{if } n\equiv 0\pmod 2; \\
            3U_{n-1}-\frac{n-1}{2}, & \text{if } n\equiv 1\pmod {10}; \\
            3U_{n-1}, & \text{otherwise.}
        \end{cases}
    \]
    This sequence is indeed a linear recurrence sequence, satisfying the linear recurrence relation $U_{n+22}=4U_{n+20}+2U_{n+12}-8U_{n+10}-U_{n+2}+4U_n$ for $n\ge 0$. It is associated with the alternate base $\big(\frac{4}{3},3\big)$. This base yields $\mathbf{d}_0=110^\omega$, $\mathbf{d}_1=30^\omega$, $\mathbf{d}_0^*=(10)^\omega$ and $\mathbf{d}_1^*=2(10)^\omega$. The associated graph $G$ is depicted in \cref{fig:graphG-ex-racines-doubles}. 
    \begin{figure}[htb]
\centering
\begin{tikzpicture}
\tikzstyle{every node}=[shape=circle, fill=none, draw=black,minimum size=15pt, inner sep=2pt]
\node(0) at (0,0) {$0$};
\node(1) at (2,0) {$1$} ;
\tikzstyle{every path}=[color=black, line width=0.5 pt]
\tikzstyle{every node}=[shape=circle]
\draw [-Latex] (0) to [loop above] node [] {} (0);
\draw [-Latex] (1) to [] node [] {} (0) ;
\end{tikzpicture}
\caption{The graph $G$ associated with the alternate base numeration system $\big(\frac{4}{3},3\big)$.}
\label{fig:graphG-ex-racines-doubles}
\end{figure}
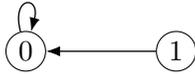 
    One can see by inspection that the language $L_U$ is regular, as we have
    \[  
        \rep_U(U_n-1)=
        \begin{cases}
            110^{n-2},          & \text{if } n\equiv 0\pmod 2; \\
            2(10)^{(n-1)/2},    & \text{if } n\equiv 1\pmod {10}; \\
            2110^{n-3},         & \text{otherwise.}
        \end{cases}
    \]
    The sequence $\Delta_1$ is given by
    \begin{equation}
        \label{eq:racines-doubles}
        (\Delta_1)_n=U_{2n-1}-3U_{2n-2}=
        \begin{cases}
            -n+1,   & \text{if } n\equiv 1\pmod 5; \\
            0,      & \text{otherwise}.
        \end{cases}
    \end{equation}  
    It is a linear recurrence sequence with minimal polynomial $X^{10}-2X^5+1=(X^5-1)^2$. Thus, we see all fifth roots of unity as eigenvalues of this sequence.
    Of course, the choice of $M=5$ was arbitrary here, and this example shows that any root of unity can occur as a double eigenvalue of $\Delta_i$ in this case.
\end{example}

Now that we have reduced our study to linear sequences with eigenvalues that are roots of unity with multiplicity at most $2$, we investigate precisely which of these sequences lead to regular languages. This is the point of our next and final main result. We will see that although eigenvalues of multiplicity $2$ may occur, these occurrences are heavily constrained. In fact, only one new type of behavior arises. 

In order to state this result, we introduce one last definition.

\begin{definition}
\label{def:Gamma-iej}
    Let $i\in\{0,\ldots,p-1\}$ be such that there exist $s\ge 1$ and $r\ge 1$ such that $\sigma^{s+r}(i)=\sigma^s(i)$, with $s$ minimal,
    and such that the sequences $\Delta_i,\Delta_i,\ldots,\Delta_{\sigma^{s-1}}$ all satisfy the condition \ref{item:vp2x-b} of \cref{lem:vp2x-2cond} with a common $M$. For all $j\in\{0,\ldots,s-1\}$ and all $e\in\{0,\ldots,M-1\}$, we let $\Gamma_{i,e,j}$ the ultimate constant value of $(\Delta_{\sigma^j(i)})_{nM+e-m_{i,j}}-(\Delta_{\sigma^j(i)})_{(n-1)M+e-m_{i,j}}$.
\end{definition}

\begin{theorem}
\label{thm:i-vers-cycle}
    Let $i\in\{0,\ldots,p-1\}$ be such that there exist $s\ge 1$ and $r\ge 1$ such that $\sigma^{s+r}(i)=\sigma^s(i)$, with $s$ minimal. In this case the greedy expansions $\mathbf{d}_i,\ldots,\mathbf{d}_{\sigma^{s+r-1}(i)}$ are finite and $\mathbf{d}^*_i=d'_i\cdots d'_{\sigma^{s-1}(i)}(d'_{\sigma^s(i)}\cdots d'_{\sigma^{s+r-1}(i)})^\omega$. We assume that the languages $L_{\sigma(i)},\ldots,L_{\sigma^{s+r-1}(i)}$ are all regular, that the sequences $\Delta_i,\Delta_{\sigma(i)},\ldots,\Delta_{\sigma^{s-1}(i)}$ all satisfy the condition \ref{item:vp2x-b} of \cref{lem:vp2x-2cond} with a common $M$ such that $Mp=M'k_{\sigma^s(i),r}$ with $M'\ge 1$, and that the sequences $\Delta_{\sigma^s(i)},\ldots,\Delta_{\sigma^{s+r-1}(i)}$ are all ultimately periodic with period $M$. 
    
    \smallskip
    The following assertions are equivalent.
    
    \smallskip
    \begin{enumerate}[label=(\alph*)] \setlength\itemsep{0.7em}
        \item \label{item:i-vers-cycle-a} The language $L_i$ is regular.
        \item \label{item:i-vers-cycle-b} For all $c\ge 1$, the language $L_{i,c}$ is regular.
        \item \label{item:i-vers-cycle-c} For all $e\in\{0,\ldots,M-1\}$, either $\Gamma_{i,e,0}=0$, or $\Gamma_{i,e,0}=-\Delta_{\sigma^s(i),e'}^{(M'r)}<0$ where $e'=(e-m_{i,s})\bmod M$ and $\Gamma_{i,e,j}=0$ for all $j\in\{1,\ldots,s-1\}$.
    \end{enumerate}
\end{theorem}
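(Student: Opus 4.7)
The implication \ref{item:i-vers-cycle-b}$\Rightarrow$\ref{item:i-vers-cycle-a} is immediate by taking $c=1$. The strategy for \ref{item:i-vers-cycle-a}$\Rightarrow$\ref{item:i-vers-cycle-c} is to proceed by induction on $s\ge 1$, while \ref{item:i-vers-cycle-c}$\Rightarrow$\ref{item:i-vers-cycle-b} is handled by explicitly constructing a regular decomposition of each $L_{i,c}$.

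For the base case $s=1$ of \ref{item:i-vers-cycle-a}$\Rightarrow$\ref{item:i-vers-cycle-c}, I combine \cref{prop:vers-cycle-vp2x} and \cref{lem:xy*z} to decompose $L_i=F\cup\bigcup_{e=0}^{M-1}x_ey_e^*z_e$, where for each $e$, either $x_ey_e^\omega=\mathbf{w}_{i,j_e}$ (Case~A) or $x_ey_e^\omega=\mathbf{d}_i^*$ (Case~B); since $\sigma(i)$ lies in the cycle, every $\Gamma_{i,e,h}$ for $h\ge 1$ vanishes, so the formulas from the proof of \cref{prop:vers-cycle-vp2x} yield $\Gamma_{i,e,0}=0$ in Case~A and $\Gamma_{i,e,0}=-\Delta_{\sigma(i),e'}^{(M'r)}$ in Case~B, matching~\ref{item:i-vers-cycle-c}. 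For the inductive step $s\ge 2$, applying the IH to $\sigma(i)$ (which satisfies the hypotheses of the theorem with parameter $s-1$) together with the identification $\Gamma_{i,e,h}=\Gamma_{\sigma^h(i),(e-m_{i,h})\bmod M,0}$ shows that each $\Gamma_{i,e,h}\in\{0,-K\}$ for $h\in\{1,\ldots,s-1\}$, where $K=\Delta_{\sigma^s(i),e'}^{(M'r)}$, and iterating the constraint part of~\ref{item:i-vers-cycle-c} on each $\sigma^{h'}(i)$ forces \emph{at most one} of these to equal $-K$. The crucial observation is that whenever $\rep_{i,1}(n)$ starts with $d'_i$ for large $n\equiv e\pmod M$ (true in Case~A with $j_e\ge 1$ and in Case~B), the greedy computation forces $(\Delta_i)_n\le 0$, hence $\Gamma_{i,e,0}\le 0$. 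Combined with the Case~A formula $\Gamma_{i,e,0}=-\sum_{h=1}^{\min(j_e,s-1)}\Gamma_{i,e,h}\in\{0,K\}$, this forces $\Gamma_{i,e,0}=0$ (Case~1 of~\ref{item:i-vers-cycle-c}); combined with the Case~B formula $\Gamma_{i,e,0}=-\sum_{h=1}^{s-1}\Gamma_{i,e,h}-K$, it yields either $\Gamma_{i,e,0}=0$ (Case~1 of~\ref{item:i-vers-cycle-c}, with one intermediate $\Gamma=-K$) or $\Gamma_{i,e,0}=-K$ with all intermediate $\Gamma$'s zero (Case~2 of~\ref{item:i-vers-cycle-c}). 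The remaining Case~A with $j_e=0$ makes $(\Delta_i)_n$ eventually constant, so $\Gamma_{i,e,0}=0$ as well.

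For \ref{item:i-vers-cycle-c}$\Rightarrow$\ref{item:i-vers-cycle-b}, I fix $c\ge 1$, split $L_{i,c}$ modulo $M$ via \cref{lem:SplitModp}, and describe $\rep_{i,c}(nM+e)$ through \cref{lem:tech} by tracking the first $j$ at which $(\Delta_i^{(j)})_{nM+e}\ge c$. In Case~1 of~\ref{item:i-vers-cycle-c} ($\Gamma_{i,e,0}=0$), the relevant partial sums either stabilize or decrease, and the cycle-wrapping argument from the proof of \cref{thm:i-cycle} yields a regular language. In Case~2 of~\ref{item:i-vers-cycle-c} ($\Gamma_{i,e,0}=-K<0$, intermediate $\Gamma$'s zero), the sums $(\Delta_i^{(j)})_{nM+e}$ stay bounded for $j\le s$ and then grow as $-Kn+lK+\widetilde c_{h'}$ for $j=s+lM'r+h'$, so the first crossing of $c$ occurs at $j_n=s+l_nM'r+h'_e$ with $l_n$ linear in $n$ and $h'_e$ bounded in $n$; this produces a regular expression of the form $d'_id'_{\sigma(i)}\cdots d'_{\sigma^{s-1}(i)}(d'_{\sigma^s(i)}\cdots d'_{\sigma^{s+r-1}(i)})^*w_e$ for a finite language $w_e$.

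The main technical obstacle lies in the inductive step of \ref{item:i-vers-cycle-a}$\Rightarrow$\ref{item:i-vers-cycle-c}: one must carefully orchestrate the iterated IH along the path $i\to\sigma(i)\to\cdots\to\sigma^{s-1}(i)$ to secure the ``at most one $-K$'' structure and then combine it with the sign constraint on $(\Delta_i)_n$ imposed by the first digits of the greedy expansion. For \ref{item:i-vers-cycle-c}$\Rightarrow$\ref{item:i-vers-cycle-b}, the remaining work is to translate the linear-in-$n$ crossing behavior from Case~2 into an explicit regular expression respecting the cycle period $M'r$.
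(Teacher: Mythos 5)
Your outline is sound in its main lines, and your (c)$\Rightarrow$(b) direction follows the paper's route (split modulo $M$ via \cref{lem:SplitModp}, then drive everything through \cref{lem:tech}), but the two proofs genuinely diverge on (a)$\Rightarrow$(c). The paper argues by \emph{contraposition}: if (c) fails for some $e$, it uses \cref{lem:tech} to manufacture, for every $C$, $C$ distinct suffixes of the same length of words of $L_i$ (the key point being that the constant $Q=\sum_{j=0}^{s-1}\Gamma_{i,e,j}+\Delta^{(M'r)}_{\sigma^s(i),e'}$ is nonzero), contradicting slenderness via \cref{lem:xy*z}. You instead argue \emph{directly}, re-opening the $xy^*z$ decomposition and the Case~A/Case~B formulas for $\Gamma_{i,e,0}$ from the proof of \cref{prop:vers-cycle-vp2x}, and combining them with the sign constraint $(\Delta_i)_n\le 0$ (forced by the leading block $d'_i$ via \cref{lem:tech}) and with $\Delta^{(M'r)}_{\sigma^s(i),e'}\ge 0$ supplied by \cref{thm:i-cycle}. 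This is a legitimate alternative and is arguably more transparent about \emph{why} (c) holds, at the cost of relying on formulas that live inside the proof of \cref{prop:vers-cycle-vp2x} rather than in its statement; the paper's contrapositive argument is self-contained given \cref{lem:tech}, and it disposes of the possibility $\Gamma_{i,e,0}>0$ without ever needing the Case~A/B dichotomy.

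The one place you must tighten is Case~2 of (c)$\Rightarrow$(b). Tracking ``the first $j$ at which $(\Delta_i^{(j)})_{nM+e}\ge c$'' is delicate: that crossing index grows linearly with $n$ and sits exactly at the boundary $k_{i,j}\approx np-i$ where \cref{lem:tech} ceases to be applicable, so you would have to verify that the crossing actually occurs within the word, and that both the overshoot and the residual position $h'_e$ inside the cycle stabilize as $n$ grows. The paper sidesteps all of this by applying \cref{lem:tech} only at the \emph{fixed} depths $s+M'r$ and $s$, obtaining $\rep_{i,c}(n)=d'_i\cdots d'_{\sigma^{s+M'r-1}(i)}\rep_{\sigma^s(i),c_n}(n-M-m_{i,s})$ and $\rep_{i,c}(n-M)=d'_i\cdots d'_{\sigma^{s-1}(i)}\rep_{\sigma^s(i),c_n}(n-M-m_{i,s})$ with the \emph{same} constant $c_n$ (this is precisely where $\Gamma_{i,e,0}=-\Delta^{(M'r)}_{\sigma^s(i),e'}$ is used); consecutive words then differ by the insertion of a single cycle block, and iterating yields the regular expression at once. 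I recommend replacing the crossing-point analysis by that telescoping argument.
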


\begin{proof}
    We prove this proposition by induction on $s$. We assume that the equivalences hold for $\sigma(i),\ldots,\sigma^{s-1}(i)$ and show them for $i$. The base case $s=1$ and the induction step will be addressed simultaneously. It is clear that \ref{item:i-vers-cycle-b} implies \ref{item:i-vers-cycle-a}. 

    \medskip
    Let us show that \ref{item:i-vers-cycle-c} implies \ref{item:i-vers-cycle-b}. Consider some fixed $c\ge 1$.
    From~\cref{lem:SplitModp}, it suffices to show that the language
    \[
        L_{i,c,e}=\{\rep_{i,c}(n) : n\ge 1,\ n\equiv e\pmod M,\ U_{np-i}\ge c\}.
    \]
    is regular for each $e\in\{0,\ldots,M-1\}$. We fix such an $e$ and consider the two cases given in \ref{item:i-vers-cycle-c} separately. In what follows, we let $n$ be an integer congruent to $e$ modulo $M$ and since we are interested in ultimate properties only, we always ask $n$ to be sufficiently large so that all the future claims hold.

    \medskip
    First, consider the case where $\Gamma_{i,e,0}=0$. This means that the value $(\Delta_i)_n$ 
    is eventually equal to a constant that we  name $\Delta_{i,e,0}$ by analogy with \cref{def:constant-Delta}.
    If $\Delta_{i,e,0}\ge c$ then we get from \cref{lem:tech} that 
    \[
        \rep_{i,c}(n)\in d_i0^*\rep_U(\Delta_{i,e,0}-c),
    \]
    which implies that $L_{i,c,e}$ is indeed regular. Now, suppose that $\Delta_{i,e,0}<c$. \cref{lem:tech} yields
    \[
        \rep_{i,c}(n)=d'_i\rep_{\sigma(i),c'}(n-m_{i,1}).
    \]
    where $c'=c-\Delta_{i,e,0}$. Letting $e'=(e-m_{i,1})\bmod M$, we get that the languages $L_{i,c,e}$ and $d'_iL_{\sigma(i),c',e'}$ coincide on all long enough words. Since we have assumed $L_{\sigma(i)}$ to be a regular language, the language $L_{\sigma(i),c',e'}$ is regular by induction hypothesis if $s\ge 2$ and by \cref{thm:i-cycle} if $s=1$. This proves that $L_{i,c,e}$ is regular in this case as well.

    Second, suppose that $\Gamma_{i,e,0}=-\Delta_{\sigma^s(i),e'}^{(M'r)}<0$ where $e'=(e-m_{i,s})\bmod M$ and that 
    \begin{equation}
    \label{eq:Gamma=0}
        \Gamma_{i,e,j}=0 \quad \text{ for }j\in\{1,\ldots,s-1\}.    
    \end{equation}
    Using the ultimate periodicity of the sequences $\Delta_{\sigma^s(i)},\ldots,\Delta_{\sigma^{s+r-1}(i)}$ combined with \eqref{eq:Gamma=0}, we have that
    \begin{equation}
        \label{eq:tM}
        \big(\Delta_i^{(j)}\big)_{n+tM} 
        = \big(\Delta_i^{(j)}\big)_n + t\Gamma_{i,e,0}
    \end{equation}
    for all $j,t\ge 0$.
    Since $\Gamma_{i,e,0}<0$, we obtain that $\big(\Delta_i^{(j)}\big)_n<c$ for all $j\in\{1,\ldots,s+M'r\}$ and all large enough $n$.
    As a result, \cref{lem:tech} ensures that
    \[
        \rep_{i,c}(n)=d'_i\cdots d'_{\sigma^{s+M'r-1}(i)} \rep_{\sigma^s(i),c_n}(n-M-m_{i,s})
    \]
    where $c_n=c-\big(\Delta_i^{(s)}\big)_n+\Gamma_{i,e,0}$. We have used that $m_{i,s+M'r}=m_{i,s}+M$ and that
    \[
        \big(\Delta_i^{(s+M'r)}\big)_n=(\Delta_i^{(s)})_n+(\Delta_{\sigma^s(i)}^{(M'r)})_{n-m_{i,s}}=(\Delta_i^{(s)})_n-\Gamma_{i,e,0}.
    \]
    Similarly, we obtain
    \[
        \rep_{i,c}(n-M) = d'_i\cdots d'_{\sigma^{s-1}(i)} \rep_{\sigma^s(i),c_n}(n-M-m_{i,s})
    \]
    where we have used \eqref{eq:tM} in order to get that $c-\big(\Delta_i^{(s)}\big)_{n-M}=c-\big(\Delta_i^{(s)}\big)_n+\Gamma_{i,e,0}=c_n$.
    We see that $\rep_{i,c}(n)$ and $\rep_{i,c}(n-M)$ share the same suffix of length $(n-M)p-i-k_{i,s}$. From there, we conclude that there exists $N\ge 1$ and a suffix $w$ such that for $n$ larger than $N$ and congruent to $e$ modulo $M$, we have
    \[
        \rep_{i,c}(n) \in d'_i\cdots d'_{\sigma^{s-1}(i)} 
            \left( d'_{\sigma^s(i)} \cdots d'_{\sigma^{s+M'r-1}(i)} \right)^* w.
    \]
    Thus $L_{i,c,e}$ is regular, as expected. 

	\medskip
    We now prove that \ref{item:i-vers-cycle-a} implies \ref{item:i-vers-cycle-c}. We will proceed by contraposition and show that if the condition \ref{item:i-vers-cycle-c} is not met, then we can construct $C$ different suffixes of identical length of words in $L_i$ for arbitrary $C$, which contradicts the regularity of this language by \cref{lem:xy*z}.

    We suppose that there exists $e\in\{0,\ldots,M-1\}$ such that $\Gamma_{i,e,0}\ne 0$ and, either $\Gamma_{i,e,0}\ne -\Delta_{\sigma^s(i),e'}^{(M'r)}$ where $e'=(e-m_{i,s})\bmod M$, or there exists $j\in\{1,\ldots,s-1\}$ such that $\Gamma_{i,e,j}\ne 0$. We pick such an $e$ and we fix $C\ge 1$. As above, we consider large $n$ with $n\equiv e\pmod M$.
    
    First, we consider the case where $\Gamma_{i,e,0}>0$. Since $(\Delta_i)_{n+tM}=(\Delta_i)_n+t\Gamma_{i,e,0}$ for all $t\ge 0$, one has $(\Delta_i)_n>0$ if $n$ is large enough. \cref{lem:tech} gives us
    \[
        \rep_{i,1}(n)\in d_i0^*\rep_U((\Delta_i)_n-1).
    \]
    Note that, in the present case, at least one of the bases $\beta_j$ of the alternate base $B=(\beta_0,\ldots,\beta_{p-1})$ is greater than $1$, which implies that $U_{np-j}-U_{np-j-1}$ tends to infinity as $n$ does. Therefore, up to letting $n$ grow, we can build $C$ distinct words 
    \[
        \rep_U((\Delta_i)_n-1),\rep_U((\Delta_i)_{n+M}-1),\ldots,\rep_U((\Delta_i)_{n+(C-1)M}-1)
    \]
    of identical length which are suffixes of words in $L_i$.
	
	Now, we suppose that $\Gamma_{i,e,0}<0$. Using the ultimate periodicity of the $r$ sequences $\Delta_{\sigma^s(i)},\ldots,\Delta_{\sigma^{s+r-1}(i)}$, we have 
    \begin{equation}
        \label{eq:Delta_n+tM}
        \big(\Delta_i^{(j)}\big)_{n+tM} 
        =  \big(\Delta_i^{(j)}\big)_n + t\sum_{h=0}^{s-1}\Gamma_{i,e,h}
    \end{equation}
    for all $j\ge s$ and $t\ge 0$. By induction hypothesis, since the languages $L_{\sigma(i)},\ldots,L_{\sigma^{s-1}(i)}$ are assumed to be regular, the condition \ref{item:i-vers-cycle-c} is met for $\sigma(i),\ldots,\sigma^{s-1}(i)$. Thus, for each $h\in\{1,\ldots,s-1\}$, since $\Gamma_{i,e,h}=\Gamma_{\sigma^h(i),f,0}$ with $f=(e - m_{i,h})\bmod M$, we obtain that either $\Gamma_{i,e,h}=0$, or $\Gamma_{i,e,h}=-\Delta^{(M'r)}_{\sigma^s(i),e'}< 0$ with $e'=(e-m_{i,s})\bmod M$ and $\Gamma_{i,e,h'}=0$ for every $h'\in\{h+1,\ldots,s-1\}$, where we have used that $m_{\sigma^h(i),s-h}\equiv (m_{i,s}-m_{i,h})\pmod M$. In particular, we have $\Gamma_{i,e,h}\le 0$ for every $h\in\{1,\ldots,s-1\}$. Therefore we have 
    \[
        \big(\Delta_i^{(j)}\big)_{n+tM}
        \le \big(\Delta_i^{(j)}\big)_n+t\Gamma_{i,e,0}
    \]
    for all $j\ge s$ and $t\ge 0$. Since $\Gamma_{i,e,0}<0$, we obtain that $\big(\Delta_i^{(j)}\big)_n\le 0$ for all $j\ge s$ and all large enough $n$ (depending on $j$). \cref{lem:tech} then gives us that for all $c\in\{0,\ldots,C-1\}$, we have
	\[
	    \rep_{i,1}(n+cM)  
        = d'_i\cdots d'_{\sigma^{s-1}(i)} 
        \big( d'_{\sigma^s(i)}\cdots d'_{\sigma^{s+M'r-1}(i)}\big)^c 
        \rep_{i,c'}(n-m_{i,s})
	\]
    where $c'=1-\big(\Delta_i^{(s+cM'r)}\big)_{n+cM}$. Yet, using \eqref{eq:Delta_n+tM} and \eqref{eq:sum-Delta-cM'r}, we get
    \begin{align*}
        \big(\Delta_i^{(s+cM'r)}\big)_{n+cM}
        &=\big(\Delta_i^{(s)}\big)_{n+cM}
        +\big(\Delta_{\sigma^s(i)}^{(cM'r)}\big)_{n+cM-m_{i,s}} \\
        &=\big(\Delta_i^{(s)}\big)_n+c\sum_{j=0}^{s-1}\Gamma_{i,e,j}+c\Delta^{(M'r)}_{\sigma^s(i),e'}
    \end{align*}
    where $e'=(e-m_{i,s})\bmod M$. Let us argue that the constant 
    \[
        Q:=\sum_{j=0}^{s-1}\Gamma_{i,e,j}+\Delta^{(M'r)}_{\sigma^s(i),e'}
    \]
    is different from $0$. If $\Gamma_{i,e,j}=0$ for all $j\in\{1,\ldots,s-1\}$, then by choice of $e$, we must have $\Gamma_{i,e,0}\ne -\Delta^{(M'r)}_{\sigma^s(i),e'}$, hence $Q\ne 0$. Now, suppose that $\Gamma_{i,e,j}\ne 0$ for some $j\in\{1,\ldots,s-1\}$. Then, by the induction hypothesis, as explained at the beginning of this case, we must have $\Gamma_{i,e,j}=-\Delta^{(M'r)}_{\sigma^s(i),e'}<0$ and $\Gamma_{i,e,j'}=0$ for all $j'\in\{1,\ldots,s-1\}$ that are distinct from $j$. Since $\Gamma_{i,e,0}<0$, we also get $Q\ne 0$. Thus, up to letting $n$ grow, once again, we see that we can build $C$ distinct words 
    \[
         \rep_{i,c'}(n-m_{i,s}),\ \text{ for }c'\in\{1-\big(\Delta_i^{(s)}\big)_n-cQ : 0\le c<C\},
    \]
    of identical length which are suffixes of words in $L_i$.
\end{proof}

\begin{remark} 
\label{rem:i-vers-cycle-effective}
Let us argue that the condition \ref{item:i-vers-cycle-c} of \cref{thm:i-vers-cycle} can be used effectively. Once the eigenvalues of $U$ are known, we can find the eigenvalues of the sequences $\Delta_{\sigma^j(i)}$, for $j\in\{0,\ldots,s-1\}$, and check whether these sequences all satisfy the condition \ref{item:vp2x-a} of \cref{lem:vp2x-2cond}. If this is the case, and assuming that we already know that the sequences $\Delta_{\sigma^j(i)}$ with $j\in\{s,\ldots,s+r-1\}$ are ultimately periodic, a common $M$ (as in the statement of \cref{thm:i-vers-cycle}) can be computed. From there, the values $\Gamma_{i,e,j}$ and $\Delta_{\sigma^s(i),e'}^{(M'r)}$ can be computed, and \cref{thm:i-vers-cycle} can be used to decide the regularity of $L_i$.
\end{remark}

\begin{example}
     We resume \cref{ex:racines-doubles} to illustrate the notation of \cref{thm:i-vers-cycle}. We have $p=2$, $i=1$, $s=r=1$, $k_{0,1}=2$, and $M=M'=5$. We also have $(\Delta_0)_n=1$ for all $n$ while the values $(\Delta_1)_n$ were given in \eqref{eq:racines-doubles}  We get $\Gamma_{1,e,0}=0$ for $e=0,2,3,4$. Now, consider the case where $e=1$. For any $n$, we have $\Gamma_{1,1,0}=(\Delta_1)_{5n+1}-(\Delta_1)_{5n-4}=-5n-(-5n+5)=-5$. Finally, we get $m_{1,1}=1$, $e'=(e-m_{1,1})\bmod 5=0$ and $\Delta^{(5)}_{0,0}=\sum_{h=0}^4 \Delta_{0,0,h}=\sum_{h=0}^4 (\Delta_0)_{5n-h}=5$, in accordance with \cref{thm:i-vers-cycle}.
\end{example}

\section{Decision procedure}
\label{sec:decision}

The results of this work can be used in practice to decide if the language $L_U$ associated with any given positional numeration system $U$ is regular. In this section, we describe the semi-decision procedure induced by our results, namely \cref{prop:regular->Parry,thm:i-infini,thm:i-vers-infini,prop:cycle-per,thm:i-cycle,prop:vers-cycle-vp2x,thm:i-vers-cycle}. Below, we will argue that in the situation where the obtained alternate base is Parry, we indeed get a decision procedure.

\begin{enumerate}[label=(\arabic*)]
    \item Using \cref{prop:reg->linear}, identify the values of $p$ and $\beta_0,\ldots,\beta_{p-1}$.
    \item Compute the infinite words $\mathbf{d}_0,\ldots,\mathbf{d}_{p-1}$ and the graph $G$. By \cref{prop:regular->Parry}, if one of the infinite words $\mathbf{d}_0,\ldots,\mathbf{d}_{p-1}$ is aperiodic, then $L_U$ is not regular. In this case, the algorithm stops with "no".
    \item \label{item:decision-3} Otherwise, study the regularity of the languages $L_0,\ldots,L_{p-1}$.
    \begin{enumerate}
        \item For each vertex $i$ with no outgoing edge, use the item \ref{item:i-infini-c} of \cref{thm:i-infini}
        to check whether the corresponding languages $L_i$ is regular or not. As soon as one can find a non-regular such language, the algorithm stops with "no". 
        
        \item Proceed with the vertices $i$ leading to a vertex with no outgoing edge, starting with the vertices at distance $1$, then distance $2$, etc. For each such vertex $i$, use the item \ref{item:i-vers-infini-c} of \cref{thm:i-vers-infini} to decide whether the corresponding language $L_i$ is regular. As soon as one can find a non-regular such language, the algorithm stops with "no". 
        
        \item For each cycle in $G$, first check whether every sequence $\Delta_i$ with $i$ in the cycle is ultimately periodic. If at least one of them is not, then the algorithm stops with "no", following \cref{prop:cycle-per}. Otherwise, find a common period $M$ of these sequences such that $Mp$ is a multiple of $k_{i,r}$ where $r$ is the length of the cycle and $i$ is any vertex of the cycle. Then use the item \ref{item:i-cycle-d} of \cref{thm:i-cycle} to decide whether all languages $L_i$, with $i$ in the cycle, are regular. If at least one of them is not regular, the algorithm stops with "no". 
        
        \item Finally, consider the vertices $i$ leading to a cycle, starting with the vertices at distance $1$, then distance $2$, etc. For each such vertex $i$, use \cref{prop:vers-cycle-vp2x} and the item \ref{item:i-vers-cycle-c} of \cref{thm:i-vers-cycle} to decide whether the corresponding language $L_i$ is regular.
        As soon as one can find a non-regular such language, the algorithm stops with "no". 
    \end{enumerate}
    \item If the languages $L_i$ for all $i$ in the graph have been checked to be regular, then $L_U$ itself is regular. The algorithm stops with "yes". 
\end{enumerate}

Note that, at step \ref{item:decision-3}, we can run the tests in parallel, for each path to either a vertex with no outgoing edge or a cycle. First, we can consider all the vertices with no outgoing edge and the cycles, and then proceed iteratively with the vertices at distance $d\ge 1$ to such vertices or cycles, for increasing values of $d$ until we have considered all vertices of the graph.
    
The greedy algorithm provides a semi-algorithm for testing if a given expansion of $1$ is ultimately periodic: we may simply generate digits $t_i$ and memorize the remainders $r_i$ that appear in the algorithm (to recall, $t_i=\lfloor \beta_i r_i\rfloor$ and $r_{i+1}=\beta_i r_i -t_i$). If two remainders are equal, then the algorithm loops and the expansion of $1$ is ultimately periodic. In particular, we have a semi-algorithm that tests whether a given alternate base is Parry. However, testing when an alternate base is not Parry is harder. Techniques such as those of \cref{sec:MemePolGraphesDiffs} can sometimes be applied, but there is no known general algorithm.

If we assume that the eigenvalues of $U$ are known and that the associated alternate base is known to be Parry, \cref{prop:i-infini-effective,rem:i-vers-infini-effective,rem:i-cycle-effective,rem:i-vers-cycle-effective} ensure that the above procedure can be carried out effectively.

We now illustrate the above decision procedure with two examples, chosen to present a variety of behaviors.

\begin{example}
\label{ex:ProcedureBoucle}
Let us consider the system $U$ generated  by the recurrence relation
\[ 
    U_{n+18} = 23U_{n+15} + 5U_{n+12} -46 U_{n+9} -7U_{n+6} + 23 U_{n+3} +3 U_n
\]
and the initial conditions 
\begin{align*}
    (U_0,\ldots, U_{17}) =
        & (1,\, 4,\, 9,\, 20,\, 70,\, 175,\, 489,\, 1641,\, 4015,\, 11294,\, 37898,\, 92748,\, 261291,\, \\
        & \ 876620,\, 2145176,\, 6043562,\, 20275863,\, 49617086).
\end{align*}
% \todoS[inline]{Built from the relations 
% \begin{align*}
% U_{6n}  &= 2U_{6n-1}+2U_{6n-2}-2n, \\
% U_{6n-1}&= 2U_{6n-2}+U_{6n-3}+U_{6n-4}+U_{6n-5}+2, \\
% U_{6n-2}&= 3U_{6n-3}+U_{6n-4}+1, \\
% U_{6n-3}&= 2U_{6n-4}+2U_{6n-5}-1, \\
% U_{6n-4}&= 2U_{6n-5}+U_{6n-6}+U_{6n-7}+U_{6n-8}-1, \\
% U_{6n-5}&= 3U_{6n-6}+U_{6n-7}
% \end{align*}
% and the initial conditions $(1,4,9)$.}
The minimal polynomial of $U$ is $P(X^3)$ where
\begin{align*}
    P(X)   &=X^6-23X^5-5X^4+46X^3+7X^2-23X-3 \\
        &=(X+1)^2(X-1)^2\left(X-\frac{23-\sqrt{541}}{2}\right)\left(X-\frac{23+\sqrt{541}}{2}\right).
\end{align*}  
As in the proof of \cref{prop:U->beta}, we find $p=3$ and we obtain closed formulas for $U_{3n}, U_{3n-1}$ and $U_{3n-2}$ from which we find
\[
    \beta_0=\frac{19+\sqrt{541}}{15},\quad 
    \beta_1=\frac{11+\sqrt{541}}{14} 
    \quad \text{ and } \quad
    \beta_2=\frac{17+\sqrt{541}}{12}.
\]
Using the semi-algorithm described above, we find that the alternate base $B$ is Parry as we obtain 
\[
    \mathbf{d}_0=220^\omega,\quad 
    \mathbf{d}_1=21110^\omega
    \quad \text{ and } \quad 
    \mathbf{d}_2=310^\omega.
\]
The graph $G$ is depicted on \cref{fig:graphGProcBoucle}.
\begin{figure}[htb]
\centering
\begin{tikzpicture}
\tikzstyle{every node}=[shape=circle, fill=none, draw=black,minimum size=15pt, inner sep=2pt]
\node(0) at (0,0) {$0$};
\node(1) at (2,0) {$1$} ;
\node(2) at (4,0) {$2$} ;
\tikzstyle{every path}=[color=black, line width=0.5 pt]
\tikzstyle{every node}=[shape=circle]
\draw [-Latex] (0) to [bend left=45] node [] {} (2) ;
\draw [-Latex] (1) to [bend left=20] node [] {} (2) ;
\draw [-Latex] (2) to [bend left=20] node [] {} (1) ;
\end{tikzpicture}
\caption{The graph $G$ associated with the alternate base $B$ of \cref{ex:ProcedureBoucle}.}
\label{fig:graphGProcBoucle}
\end{figure}

Now we may start studying the regularity of the languages $L_i$ individually. We start with $L_1$ and $L_2$. Using \cref{def:Delta_i}, for $n\ge 2$, we find that
\[
    (\Delta_1)_n=
    \begin{cases}
        2,     & \text{if } n\text{ is even}; \\
        -1,    & \text{if } n\text{ is odd}
    \end{cases}
    \qquad\text{ and }\qquad
    (\Delta_2)_n=
    \begin{cases}
        1,     & \text{if } n\text{ is even}; \\
        -1,    & \text{if } n\text{ is odd}.
    \end{cases}
\]
 This can be proved by noting that $\Delta_1$ and $\Delta_2$ ultimately satisfy the recurrence relation given by $P$. We may thus proceed past \cref{prop:cycle-per} and verify the criterion \ref{item:i-cycle-d} in \cref{thm:i-cycle} for $i=1$. Note that this criterion only needs to be checked for one vertex in the cycle. Here we have $M=2$, $r=2$, $k_{1,r}=6$, $M'=1$, $m_{1,0}=0$ and $m_{1,1}=1$. Using \cref{def:constant-Delta}, we can compute the following values, where $n$ is taken large enough to enter the ultimate constant part:
\begin{align*}
    \Delta_{1,0}^{(2)}&=\Delta_{1,0,0}+\Delta_{1,0,1}= (\Delta_1)_{2n+0-0}+(\Delta_2)_{2n+0-1}=2-1=1\\
    \Delta_{1,1}^{(2)}&=\Delta_{1,1,0}+\Delta_{1,1,1}=(\Delta_1)_{2n+1-0}+(\Delta_2)_{2n+1-1}=-1+1=0.
\end{align*}
Since $\Delta_{1,e}^{(2)}\ge 0$ for $e\in\{0,1\}$, the languages $L_1$ and $L_2$ are both regular. 

It remains to check the regularity of the language $L_0$. Here we find that for $n\ge 1$, we have 
\[
    (\Delta_0)_n=
    \begin{cases}
        -1,     & \text{if } n\text{ is even}; \\
        -6n,    & \text{if } n\text{ is odd}.
    \end{cases}
\]
 As in the previous paragraph, this can be proved by noting that $\Delta_0$ ultimately satisfies the recurrence relation given by $P$. So $(\Delta_0)_{n+2}-(\Delta_0)_n$ is ultimately periodic with period $2$, and we may proceed past \cref{prop:vers-cycle-vp2x} and verify the criterion \ref{item:i-vers-cycle-c} in \cref{thm:i-vers-cycle}. Here $M=2$ satisfies the assumptions of the statement. Using \cref{def:Gamma-iej} and choosing large enough $n$ to be in the constant part, we have 
 \begin{align*}
     \Gamma_{0,0,0} &=(\Delta_0)_{2n}-(\Delta_0)_{2n-2}=0\\
     \Gamma_{0,1,0} &=(\Delta_0)_{2n+1}-(\Delta_0)_{2n-1}=-12.
 \end{align*} 
 Since $\Gamma_{0,1,0}$ is not zero, it must be equal to $-\Delta_{2,1}^{(2)}$. But this fails to be the case as $m_{2,0}=0$, $m_{2,1}=1$ and
 \[
    \Delta_{2,1}^{(2)}=\Delta_{2,1,0}+\Delta_{2,1,1}=(\Delta_2)_{2n+1-0}+(\Delta_1)_{2n+1-1}=-1+2=1.
 \]
 Therefore, the language $L_0$ is not regular, hence the numeration language $L_U$ is not regular either. 
\end{example}

\begin{example}
\label{ex:ProcedureInfini}
Consider the system $U$ generated by the linear recurrence relation
\begin{align*}
    U_{n+13}
    &=-U_{n+12}-U_{n+11}+22U_{n+10}+22U_{n+9}+22U_{n+8}+13U_{n+7} \\
    &\qquad +13U_{n+6}+13U_{n+5}-10U_{n+4}-10U_{n+3}-10U_{n+2}
\end{align*}
and the initial conditions
\[
(U_0,\ldots,U_{12}) = (1,\,4,\,8,\,22,\,71,\,185,\,476,\,1614,\,4179,\,10740,\,36396,\,94271,\,242238).
\]
The minimal polynomial of $U$ factors as $X^2 (X+1) (X^2-X+1) (X^2+X+1) (X^6 - 23 X^3 + 10)$. Multiplying this polynomial by $X-1$, we see  $U$ also satisfies the linear recurrence relation of characteristic polynomial $X^2P(X^3)$, where $P=(X^2-1)(X^2-23X+10)$. 
% \todoS[inline]{Built from the relations 
% \begin{align*}
% U_{6n}  &= 2U_{6n-1}+U_{6n-2}+2U_{6n-3}-U_{6n-4}-1, \\
% U_{6n-1}&= 2U_{6n-2}+2U_{6n-3}-1, \\
% U_{6n-2}&= 3U_{6n-3}+U_{6n-4}-3, \\
% U_{6n-3}&= 2U_{6n-1}+U_{6n-2}+2U_{6n-3}-U_{6n-4}+1, \\
% U_{6n-4}&= 2U_{6n-5}+2U_{6n-6}-1, \\
% U_{6n-5}&= 3U_{6n-6}+U_{6n-7}+1
% \end{align*}
% and the initial conditions $(1,4,8,22)$.
% }
In this example, we find $p=3$ and we obtain closed formulas for $U_{3n},U_{3n-1}$ and $U_{3n-2}$, from which the associated alternate base $B=(\beta_0,\beta_1,\beta_2)$ can be computed. We find 
\[
    \beta_0=\frac{19+\sqrt{489}}{16},\quad
    \beta_1=\frac{53+\sqrt{489}}{29}
    \quad \text{ and } \quad
    \beta_2=\frac{5+\sqrt{489}}{8}.
\]
This alternate base is Parry since the corresponding greedy expansions of $1$ are 
\[
    \mathbf{d}_0=21^\omega,\quad 
    \mathbf{d}_1=220^\omega
    \quad \text{ and } \quad
    \mathbf{d}_2=310^\omega. 
\]
The associated graph is depicted in \cref{fig:graphGProcInfini}. 
\begin{figure}[htb]
\centering
\begin{tikzpicture}
\tikzstyle{every node}=[shape=circle, fill=none, draw=black,minimum size=15pt, inner sep=2pt]
\node(0) at (0,0) {$0$};
\node(1) at (2,0) {$1$} ;
\node(2) at (4,0) {$2$} ;
\tikzstyle{every path}=[color=black, line width=0.5 pt]
\tikzstyle{every node}=[shape=circle]
\draw [-Latex] (1) to [] node [] {} (0) ;
\draw [-Latex] (2) to [] node [] {} (1) ;
\end{tikzpicture}
\caption{The graph $G$ associated with the alternate base of \cref{ex:ProcedureInfini}.}
\label{fig:graphGProcInfini}
\end{figure}
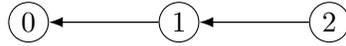

We must first decide the regularity of $L_0$, then $L_1$, then $L_2$. 
For $L_0$, the expansion $\mathbf{d}_0$ has minimal preperiod $q_0=1$ and the minimal period that is a multiple of $p$ is $m_0=3$. Using \cref{def:Delta_iqm}, we obtain 
\[
    (\Delta_{0,1,3})_n=
    \begin{cases}
        -1,     & \text{if } n\text{ is even}; \\
        1,    & \text{if } n\text{ is odd}
    \end{cases}
\]
for all $n\ge 2$. This can be seen from the initial values of this sequence and the fact that it ultimately follows the linear recurrence relation given by $P$. \cref{lem:Delta-q} then gives us $(\Delta_{0,1,6})_n=(\Delta_{0,1,3})_n+(\Delta_{0,1,3})_{n-1}=0$. The language $L_0$ is then regular by using the criterion \ref{item:i-infini-c} of \cref{thm:i-infini} with $k=2$. 

For $L_1$ and subsequently $L_2$, using \cref{def:Delta_i}, we have   
\[
    (\Delta_1)_n=-1
    \qquad\text{ and }\qquad
    (\Delta_2)_n=
    \begin{cases}
        -3,     & \text{if } n\text{ is even}; \\
        -1,    & \text{if } n\text{ is odd}.
    \end{cases}
\]
for all $n\ge 2$. Therefore, we may use the criterion \ref{item:i-vers-infini-c} of \cref{thm:i-vers-infini} to deduce that $L_1$, then $L_2$ are regular. In the end, the numeration language $L_U$ is regular.
\end{example}

\section{Comments on Hollander's original conjecture}
\label{sec:comments-Hollander}

In this section, we go back to Hollander's original conjecture, which was the starting point for this research. We restate it with our notation.
\begin{conjecture}[\cite{Hollander:1998}, Section 8.2]
    If $L$ is regular, there exists $p$ such that the limit
    \[
    \lim_{n\to\infty} \frac{U_{np+i}}{U_{(n-1)p+i}} 
    \]
    exists and is independent of $i$. Furthermore, the minimal polynomial $P$ of the recurrence relation satisfied by $U$ is of the form $P(X)=Q(X^p)$ where $Q$ is the minimal polynomial for a recurrence which gives rise to a regular language.
\end{conjecture}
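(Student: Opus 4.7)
The plan is to refute Hollander's conjecture, in line with the announcement made in the introduction. We begin by observing that the statement naturally splits into two parts, and we treat them separately.

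The first part asserts the existence of a $p$ such that the limit $\lim_{n\to\infty}U_{np+i}/U_{(n-1)p+i}$ exists and is independent of $i$. This part is in fact \emph{true}: it is a direct reformulation of our \cref{prop:U->beta}, which yields such a $p$ (the period of the associated alternate base) and identifies the common limit with $\rho^p$, where $\rho$ is the common modulus of the dominating eigenvalues. Thus the first half of the conjecture is in fact a consequence of our work.

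The second part predicts that the minimal polynomial $P$ of $U$ must factor as $P(X)=Q(X^p)$ for some polynomial $Q$ that is itself the minimal polynomial of a recurrence admitting a regular numeration system. To refute this, we exhibit the counterexample of \cref{ex:ProcedureInfini}. There we have already verified, via our decision procedure, that the numeration language $L_U$ is regular, that the associated alternate base has minimal period $p=3$, and that the minimal polynomial is
\[
    P(X)=X^{2}(X+1)(X^{2}-X+1)(X^{2}+X+1)(X^{6}-23X^{3}+10).
\]
Any $p$ witnessing the first part of the conjecture for this system must be a multiple of $3$. But $P$ factors as $Q(X^p)$ if and only if every nonzero coefficient of $P$ sits at a degree divisible by $p$, and a direct expansion of $P$ yields a nonzero coefficient at $X^{2}$ (together with nonzero coefficients at several other degrees not divisible by $3$). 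Consequently, no such $Q$ exists, and the conjecture is refuted.

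The main difficulty lies not in the verification itself but in the identification of the counterexample. In Hollander's dominant root setting ($p=1$), the predicted factorization is vacuous, so the phenomenon is invisible there. The freedom to construct such examples arises only in the genuinely alternate regime, where the graph $G$ may combine leaves, cycles, and paths in ways that draw the eigenvalues governing the sequences $\Delta_i$ and $\Delta_i^{(j)}$ from spectral blocks not aligned with any multiplicative $X^p$-structure. In \cref{sec:MemePolGraphesDiffs}, we will push this further by exhibiting two positional numeration systems with identical minimal polynomials but different graphs $G$, confirming that the regularity status cannot be read off the minimal polynomial alone. Together, these observations explain why no clean polynomial criterion of the Hollander type can be expected beyond the dominant root case.
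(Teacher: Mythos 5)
Your refutation of the strict reading of the conjecture is correct and follows the same basic strategy as the paper: accept the first half via \cref{prop:U->beta}, then exhibit a regular system whose minimal polynomial is not of the form $Q(X^p)$. Your counterexample \cref{ex:ProcedureInfini} does work (its minimal polynomial has degree $13$, which is not a multiple of $p=3$, so it cannot be $Q(X^3)$, and any admissible $p$ for this system is a multiple of $3$), but it is heavier than necessary; the paper uses the system of item \ref{item:liste-exemples-5} of \cref{ex:liste-exemples}, where the minimal polynomial $X^3-2X^2+4X-8$ visibly fails to be of the form $Q(X^4)$.

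The genuine omission is that the paper explicitly flags an ambiguity in Hollander's informal statement: it is unclear whether $P$ must be the \emph{minimal} polynomial of $U$ or merely the characteristic polynomial of \emph{some} recurrence satisfied by $U$. Your argument only rules out the first reading, and for your chosen example it cannot rule out the second: as noted in \cref{ex:ProcedureInfini} itself, $U$ also satisfies the recurrence of characteristic polynomial $X^2P(X^3)$ with $P=(X^2-1)(X^2-23X+10)$, hence also one of characteristic polynomial $X^3P(X^3)=Q(X^3)$ with $Q(Y)=YP(Y)$, so an exponent-divisibility check on the minimal polynomial says nothing about the looser reading. The paper addresses this with a second, structurally different example: the system with recurrence $U_{n+6}=9U_{n+3}-9U_n$ and alternate base $(3,\varphi,\varphi)$, whose product $3\varphi^2$ is not a Parry number (no power of it is Pisot, and degree-$2$ Parry and Pisot numbers coincide), so that $X^2-9X+9$ cannot be the minimal polynomial of any recurrence yielding a regular language regardless of initial conditions. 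Even then the paper is careful to say this only provides strong evidence against, not a complete refutation of, the looser interpretation. Your closing remarks about \cref{sec:MemePolGraphesDiffs} are apt but address a different point (that regularity cannot be read off the minimal polynomial at all) and do not substitute for this missing second half of the discussion.
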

We should note that in \cite{Hollander:1998} this conjecture is given in the form of a comment rather than a formal statement. Therefore, some parts of the statement are difficult to interpret unambiguously. In particular, it is not clear in Hollander's statement whether $P$ must be the \emph{minimal} polynomial of $U$, rather than any polynomial associated with a recurrence relation satisfied by $U$.

The first part of the statement was proven in \cref{prop:U->beta}. For the second part of the statement, we give two examples that refute the conjecture as written above and provide a reason to disbelieve the interpretation where $P$ need not be minimal.

\begin{example}
Consider again item \ref{item:liste-exemples-5} of \cref{ex:liste-exemples}, i.e., the system $U$ generated by the recurrence relation $U_{n+3}=2U_{n+2}-4U_{n+1}+8U_{n}$ and the initial conditions $(U_0,U_1,U_2)=(1,3,8)$. 
It can be seen that $L_U$ is regular in this case, and that $\lim_{n\to\infty}\frac{U_{n}}{U_{n-4}}=16$. Nevertheless, the minimal polynomial of $U$ is $X^3-2X^2+4X-8$, which is not of the form $Q(X^4)$.
\end{example}

The next example illustrates a more fundamental objection to the conjecture. There exists an alternate base which is Parry but whose product of elements is not Parry (in the non-alternate sense). For such a base, even if its minimal polynomial is of the form $Q(X^p)$, the polynomial $Q$ cannot be the minimal polynomial of a sequence giving rise to a regular language.

\begin{example}
    Consider the system $U$ generated by the recurrence relation $U_{n+6}=9U_{n+3}-9U_n$ and the initial conditions $(U_0,U_1,U_2,U_3,U_4,U_5)=(1,2,3,9,15,24)$. In this case, the numeration language $L_U$ is again regular. However, the associated alternate base is $(3,\varphi,\varphi)$ where $\varphi$ is the golden ratio, and $3\varphi^2$ is not a Parry number. Therefore, $X^2-9X+9$ is not the minimal polynomial of a recurrence which gives rise to a regular language, independently of the choice of initial conditions. Note that taking multiples of $p$ cannot fix this as no power of $3\varphi^2$ is a Parry number. Indeed, no power of $3\varphi^2$ is a Pisot number and it is known that Parry and Pisot numbers of degree $2$ coincide \cite{Bassino:2002}. 
\end{example}

The latter example does not completely refute Hollander's conjecture in the setting where $P$ is not assumed to be minimal, but for the conjecture to hold there would need to exist a Parry number such that $X^2-9X+9$ is a factor of its Parry polynomial (and similarly for any Parry alternate base whose product of elements is not a Parry number).

\section{Why obtaining criteria for regularity relying only on recurrence relations satisfied by $U$ is not achievable}
\label{sec:MemePolGraphesDiffs}

The results in Hollander~\cite{Hollander:1998} are of a different nature compared to ours. Rather than extracting new sequences from $U$ and deciding the regularity of $L_U$ based on them, Hollander aims to link directly the regularity of $L_U$ with polynomials giving recurrence relations satisfied by $U$, which is a more tractable criterion. The aim of this section is to present evidence that we cannot replicate this in our setting.

Indeed, in Hollander's case with $p=1$, the knowledge of the minimal polynomial $P$ of $U$ gives us the value of the dominant root $\beta$, from where Hollander's study can take place. In our case however, knowing the minimal polynomial of $U$ only tells us the value of the product of all bases, $\delta=\beta_0\cdots\beta_{p-1}$, but does not inform us on the values of $\beta_0,\ldots,\beta_{p-1}$ themselves. As a result, the behavior of $U$ rarely depends on just the minimal polynomial. We illustrate this by exhibiting a polynomial $P$ and various sets of initial conditions that lead to differing behaviors for $U$.

\begin{example}
\label{ex:MemePolGraphesDiffs}
Consider the polynomial $P=X^8-2X^6-2X^4-2$ and the associated recurrence relation $U_{n+8}=2U_{n+6}+2U_{n+4}+2U_n$. The polynomial $P$ has two roots of maximal modulus, which are $-\sqrt{\delta}$ and $\sqrt{\delta}$ where $\delta\simeq 2.80$ is the dominant root of the polynomial $X^4-2X^3-2X^2-2$. Consequently, a generic increasing sequence that satisfies this recurrence relation is associated with an alternate base of length $2$.

First, consider the initial conditions $(U_0,\ldots,U_7)= (1,2,4,6,12,17,34,47)$. We can obtain closed  formulas for $U_{2n}$ and $U_{2n-1}$. From these, we find the alternate base associated with $U$, which is $(\beta_0,\beta_1) = (2,\delta/2)$. We find $\mathbf{d}_0=20^\omega$ and $\mathbf{d}_1=10100010^\omega$. In this case, the alternate base is Parry and the associated graph is a cycle of length $2$. We find that the numeration language is regular in this case. In fact, the maximal words are  exactly the prefixes of the quasi-greedy expansions $\mathbf{d}_0^*=(11010000)^\omega$ and $\mathbf{d}_1^*=(10100001)^\omega$, depending on the parity of their length.

Now, consider the initial conditions $(U_0,\ldots,U_7)= (1,2,3,5,8,13,21,34)$. Similarly, we may obtain values for $\beta_0$ and $\beta_1$, which are $\beta_0=\frac{81}{755}+\frac{371}{755}\delta-\frac{28}{755}\delta^2+\frac{18}{755}\delta^3$ and $\beta_1=\frac{\delta}{\beta_0}=\frac{226}{119}+\frac{59}{119}\delta+\frac{45}{119}\delta^2-\frac{25}{119}\delta^3$.
We prove that this base is not Parry, as both greedy expansions of $1$ are infinite and aperiodic. We prove this by showing that the greedy algorithm started on $1$ does not reach the same remainder twice, using an idea of Schmidt \cite{Schmidt:1980,Charlier&Cisternino&Kreczman:2024}. We know that $1,\delta,\delta^2$ and $\delta^3$ form a base of $\Q(\delta)$ as a $\Q$-vector space. Considering components in  this base, multiplication by any element $\gamma$ in $\Q(\delta)$ can be represented by a matrix $M_\gamma$ in $\Q^{4\times 4}$. 
In particular, we have
\[
    M_{\delta}
    =\begin{pmatrix}
        0 & 0 & 0 & 2\\
        1 & 0 & 0 & 0\\
        0 & 1 & 0 & 2\\
        0 & 0 & 1 & 2
    \end{pmatrix}.
\]

We now consider these matrices $M_\gamma$ as elements of $\C^{4\times 4}$, so that we can diagonalize them. Let $\delta_1=\delta,\delta_2,\delta_3,\delta_4$ be the Galois conjugates of $\delta$, with $|\delta_2|>1$. It is easily seen that for each $k\in\{1,2,3,4\}$, the matrix $M_\delta$ admits the eigenvector $v_k=(-2\delta_k - 2\delta_k^2 + \delta_k^3, -2 -2\delta_k + \delta_k^2, -2+\delta_k, 1)^T$ with eigenvalue $\delta_k$. For any $\gamma\in\Q(\delta)$, if we decompose $\gamma=a+b\delta+c\delta^2+d\delta^3$, then $M_\gamma=aI+bM_\delta+cM_\delta^2+dM_\delta^3$. Therefore, the same vectors $v_k$ are eigenvectors of $M_\gamma$, with corresponding eigenvalues $a+b\delta_k+c\delta_k^2+d\delta_k^3$. Therefore, all matrices $M_\gamma$ are simultaneously diagonalizable by the matrix $S=\begin{pmatrix} v_1 & v_2 & v_3 & v_4\end{pmatrix}$, hence in particular the matrices $M_{\beta_0}$ and $M_{\beta_1}$.

If we now express the remainders in the greedy algorithm when applied to $1$ in the base of the $\C$-vector space $\C^4$ corresponding to the eigenvectors found above, one component corresponds to a Galois conjugate $\delta_2$ of $\delta$ which is approximately $-1.13$. In this component, multiplications by $\beta_0$ in the greedy algorithm correspond to a multiplication by $\frac{81}{755}+\frac{371}{755}\delta_2-\frac{28}{755}\delta_2^2+\frac{18}{755}\delta_2^3$, which is approximately $-0.53$,  multiplications by $\beta_1$ in the greedy algorithm correspond to a multiplication by $\frac{226}{119}+\frac{59}{119}\delta_2+\frac{45}{119}\delta_2^2-\frac{25}{119}\delta_2^3$, which is approximately $2.12$, and the subtraction of $1$ that is sometimes performed between two such multiplications corresponds to adding approximately $0.11$. From this, it can be seen that if an absolute value of $3$ or more is reached on this component when performing the greedy algorithm, then the value of this component tends to infinity as $n$ does. We can numerically verify that the value of this component after $50$ steps of the greedy algorithm for $\mathbf{d}_0$ is about $5.32$ and the one for $\mathbf{d}_1$ is about $-4.63$. As a result, the process as seen in $\C^4$ and in the base of eigenvectors never reaches a periodic point, and neither do the process as seen in the canonical base or the original greedy algorithm. Therefore, both expansions are aperiodic.

Other initial conditions of note are listed in \cref{tab:conditions-initiales}. We have not been able to characterize for which initial conditions the associated alternate base is Parry, which is necessary for regularity.
\begin{table}
    \centering
    \begin{tabular}{c|l}
     Initial conditions & Behavior \\
     \hline
     $(1, 2, 3, 5, 9, 15, 25, 40)$ & Both expansions are ultimately periodic.\\
     $(1, 2, 3, 5, 8, 13, 21, 36)$ & One expansion is ultimately periodic, the other is aperiodic.\\
     $(1, 2, 3, 5, 8, 13, 21, 39)$ & One expansion is finite, the other is ultimately periodic.
\end{tabular}
\medskip

    \caption{Choices of initial conditions giving rise to differing behaviors in \cref{ex:MemePolGraphesDiffs}.}
    \label{tab:conditions-initiales}
\end{table}
\end{example}

\section*{Acknowledgments}

We thank Célia Cisternino for many valuable discussions in the early days of this project.
The second author is funded by the Fonds National de la Recherche Scientifique (ASP grant number 1.A.789.23F).

\bibliographystyle{abbrv}
\bibliography{Biblio-CK25.bib}

\end{document}